\def\BBR {{\mathbb R}}
\def\BBG {{\mathbb G}}
\def\BBH {{\mathbb H}}
\def\BBV {{\mathbb V}}
\def\BBE {{\mathbb E}}
\newcommand{\cdist}{\dist_{\operatorname{CC}}}
\newcommand{\rdist}{\dist_{\operatorname{R}}}
\newcommand{\gtimes}{\cdot}
\newcommand{\gunit}{e}
\newcommand{\restr}[2][\big]{\kern -.1em #1|_{#2}}
\newcommand{\charfcn}{\chi}
\newcommand{\contc}{C_c}
\newcommand{\lipc}{\operatorname{Lip_c}}
\newcommand{\lip}{\operatorname{Lip}}
\newcommand{\liploc}{\operatorname{Lip_{loc}}}
\newcommand{\bounded}{\operatorname{\mathcal B^\infty}}
\newcommand{\boundedc}{\operatorname{\mathcal B_c^\infty}}
\newcommand{\boundedloc}{\operatorname{\mathcal B_{loc}^\infty}}
\newcommand{\currents}[1]{\operatorname{\mathbf{M}}_{#1}}
\newcommand{\currentsloc}[1]{\operatorname{\mathbf{M}}_{#1}^{\operatorname{loc}}}
\newcommand{\currentslocabs}[1]{\operatorname{\mathbf{M}}_{#1,\operatorname{abs}}^{\operatorname{loc}}}
\newcommand{\currentslocabsvanishing}[1]{\operatorname{\mathbf{M}}_{#1,\operatorname{abs}}^{\operatorname{loc},H}}
\newcommand{\normal}[1]{\operatorname{\mathbf{N}}_{#1}}
\newcommand{\normalloc}[1]{\operatorname{\mathbf{N}}_{#1}^{\operatorname{loc}}}
\newcommand{\normallocvanishing}[1]{\operatorname{\mathbf{N}}_{#1}^{\operatorname{loc},H}}
\newcommand{\heiscurrents}[1]{\mathcal D_{\BBH, k}}
\newcommand{\lcurrents}[1]{\operatorname{\mathcal{D}}_{#1}}
\newcommand{\forms}[1]{{\mathcal{D}}^{#1}}
\newcommand{\formsc}[1]{{\mathcal{D}}^{#1}_{\operatorname{c}}}
\newcommand{\extforms}[1]{{\mathcal{\tilde{D}}^{#1}}}
\newcommand{\extformsc}[1]{{\mathcal{\tilde{D}}}^{#1}_{\operatorname{c}}}
\newcommand{\smoothforms}[1]{\operatorname{\mathcal{S} }^{#1}}
\newcommand{\smoothformsc}[1]{\operatorname{\mathcal{S}}^{#1}_{\operatorname{c}}}
\newcommand{\smoothextforms}[1]{\operatorname{\mathcal{\tilde{S}}}^{#1}}
\newcommand{\smooth}{C^\infty}
\newcommand{\smoothc}{C^\infty_{\operatorname{c}}}
\newcommand{\measforms}[1]{{\mathcal{E}}^{#1}}
\newcommand{\measformsc}[1]{{\mathcal{E}}^{#1}_{\operatorname{c}}}
\newcommand{\measextforms}[1]{\mathcal{\tilde{E}}^{#1}}
\newcommand{\measextformsc}[1]{{\mathcal{\tilde{E}}}^{#1}_{\operatorname{c}}}
\newcommand{\diffforms}[1]{\operatorname{\Omega}^{#1}}
\newcommand{\Dpansu}{D_c}
\newcommand{\dpansu}{d_c}
\newcommand{\Dpansuh}{D_c^h}
\newcommand{\dpansuh}{d_c^h}
\newcommand{\driem}{d_r}
\newcommand{\diff}[1]{d^{#1}}
\newcommand{\heis}[1]{\mathbb H^{#1}}
\newcommand{\smallo}{\operatorname{o}}
\newcommand{\dist}{\operatorname{dist}}
\newcommand{\restrict}[1]{\lfloor_{ #1}}
\newcommand{\spa}{\operatorname{Span}}
\newcommand{\supp}{\operatorname{Spt}}
\theoremstyle{definition}
\newtheorem{definition}{Definition}[section]
\newtheorem{example}[definition]{Example}
\theoremstyle{plain}
\newtheorem{lemma}[definition]{Lemma}
\newtheorem{theorem}[definition]{Theorem}
\newtheorem{corollary}[definition]{Corollary}
\newtheorem{proposition}[definition]{Proposition}
\theoremstyle{remark}
\newtheorem{remark}[definition]{Remark}
\def\Xint#1{\mathchoice
{\XXint\displaystyle\textstyle{#1}}%
{\XXint\textstyle\scriptstyle{#1}}%
{\XXint\scriptstyle\scriptscriptstyle{#1}}%
{\XXint\scriptscriptstyle\scriptscriptstyle{#1}}%
\!\int}
\def\XXint#1#2#3{{\setbox0=\hbox{$#1{#2#3}{\int}$}
\vcenter{\hbox{$#2#3$}}\kern-.5\wd0}}
\def\dashint{\Xint-}
\title[Metric currents and Carnot groups]{Metric currents, differentiable structures, and Carnot groups}
\author{Marshall Williams}
\begin{document}

\thanks{Partially supported under NSF awards \#0602191, \#0353549, and \#0349290.}
\subjclass[2010]{Primary: 30L99; Secondary: 49Q15.}

\address{Department of Mathematics, Statistics, and Computer Science\newline
University of Illinois at Chicago\newline
322 Science and Engineering Offices (M/C 249)\newline
851 S. Morgan Street\newline
Chicago, IL 60607-7045}\email{mcwill@uic.edu}

\begin{abstract}
We examine the theory of metric currents of Ambrosio and Kirchheim in the setting of spaces admitting differentiable structures in the sense of Cheeger and Keith.  We prove that metric forms which vanish in the sense of Cheeger on a set must also vanish when paired with currents concentrated along that set.  From this we deduce a generalization of the chain rule, and show that currents of absolutely continuous mass are given by integration against measurable $k$-vector fields.  We further prove that if the underlying metric space is a Carnot group with its Carnot-Carath\'eodory distance, then every metric current $T$ satisfies $T\restrict{\theta}=0$ and $T\restrict{d\theta}=0$, whenever $\theta \in \diffforms{1}(\BBG)$ annihilates the horizontal bundle of $\BBG$.  Moreover, this condition is necessary and sufficient for a metric current with respect to the Riemannian metric to extend to one with respect to the Carnot-Carath\'eodory metric, provided the current either is locally normal, or has absolutely continuous mass.
\end{abstract}

\maketitle

\section{Introduction}

In \cite{AmbrosioKirchheim}, Ambrosio and Kirchheim introduced a definition of currents in metric spaces, extending the theory of normal and integral currents developed by Federer and Fleming \cite{FedererFleming} for Euclidean spaces.  The extension of these classes of currents allows the formulation of variational problems in metric spaces, and the validity of the compactness and closure theorems of \cite{FedererFleming}, proven in the metric setting in \cite{AmbrosioKirchheim}, allows for their solution.  

In this paper we investigate the theory of metric currents in spaces that admit differentiable structures, in the sense of Cheeger \cite{Cheeger} and Keith \cite{Keith}, with a particular emphasis on Carnot Groups equipped with their Carnot-Carath\'eodory metrics.  Most of the results were originally proved in the author's doctoral thesis \cite{Williams}.

\subsection*{Metric currents.}

The classical theory of currents goes back to de Rham \cite{DeRham}.
A current, in the sense of de Rham, is a member of the dual space to the space of smooth differential forms, in analogy with distributions being dual to smooth functions (in fact, distributions are $0$-dimensional currents).  A prototypical example of a $k$-dimensional current in $\BBR^n$ is the map $\omega \mapsto \int_M \omega$, where $M\subseteq \BBR^n$ is an embedded Riemannian submanifold of dimension $k$.  With this example in mind, one defines a boundary operator via Stokes' theorem, in a similar manner to how one differentiates distributions using integration by parts. Likewise, the push-forward of a current along a map is defined through duality by pulling back forms. 

Federer and Fleming studied various classes of currents with finite and locally finite mass \cite{FedererFleming}.  Continuing with the analogy between distributions and currents, one should think of a current of finite mass as being analogous to a measure, and in fact, this can be made precise if one is willing to consider vector valued measures.  The authors of \cite{FedererFleming} introduced the classes of normal currents (currents with finite mass whose boundaries also have finite mass) and integral currents  (normal currents represented by integration along a rectifiable set). 
They then proved a number of compactness and closure theorems, providing new tools for the formulation and solution of area minimization problems in $\BBR^n$, including the well-known Plateau problem. 

Motivated by an idea of De Giorgi \cite{DeGiorgi}, Ambrosio and Kirchheim \cite{AmbrosioKirchheim} extended the Federer-Fleming theory to general metric spaces by replacing the space of smooth forms with a space $\forms{k}(X)$ of Lipschitz $k$-tuples  $(f,g^1,\dotsc,g^k)$, written suggestively as $f\, dg^1\wedge\dotsb\wedge dg^k$.  A metric $k$-current $T\in \currents{k}(X)$ is defined to be a real-valued function on $\forms{k}(X)$ that is linear in each argument, continuous in an appropriate sense, vanishes where it ought to (namely, on forms $f dg^1\wedge\dotsb\wedge dg^k$ such that one of the functions $g^i$ is constant on the support of $f$), and satisfies a finite mass condition.  They demonstrated that most of the results of \cite{FedererFleming} carry over to this more general setting, and that moreover, the classes of classical and metric normal currents are naturally isomorphic in the Euclidean case.  They also proved that rectifiable currents can be classified using the metric and weak* differentiation theorems from the paper \cite{AmbrosioKirchheimRect}, mentioned below. 

Lang \cite{Lang} has introduced a variation of the Ambrosio-Kirchheim theory tailored specifically to locally compact spaces.  In this setting, the finite mass axiom is eliminated.  In spite of this, a number of results from \cite{AmbrosioKirchheim}, including the Leibniz rule and a chain rule, remain true, though the powerful closure and compactness theorems still require assumptions on the masses of currents and their boundaries \cite{Lang}, as is the case for the corresponding results in \cite{AmbrosioKirchheim} and \cite{FedererFleming}.

\subsection*{Differentiable structures}

To formulate the most general of our results below, we will need the notion of a differentiable structure, defined by Keith \cite{Keith}, and motivated by Cheeger's  differentiation theorem.

A (strong measured) differentiable structure on a metric measure space $X$ is a measurable covering of $X$ by coordinate charts $(Y,\pi)$. Here $\pi\colon Y \rightarrow \BBE$ is a Lipschitz map into a Euclidean space $\BBE$.  The defining property of a differentiable structure is the existence, for any Lipschitz function $f$, of a measurable map $y\mapsto \diff{\pi} f_y\in \BBE^*$, satisfying
\begin{equation}
\label{infinitesimalintro}
f(x) = f(y) + \langle \diff{\pi} f_y,\pi(x)-\pi(y)\rangle + \smallo(\dist(x,y))
\end{equation}
at almost every $y\in Y$.  We denote this full measure set of differentiability by $Y_f$.

The differentiation theorems of \cite{Cheeger} and \cite{Keith} state that a nice enough metric measure space $X$ has a countable covering of measurable coordinate patches $X_i$, possibly of different dimensions, on each of which one can differentiate Lipschitz functions.  ``Nice enough'' means, in the case of \cite{Cheeger}, that $X$ has a doubling measure and satisfies a Poincar\'e inequality, as defined in \cite{HeinonenKoskela}.
This differentiation determines a  ``measurable cotangent bundle'' on $X$, which coincides with the usual cotangent bundle if $X$ is a Riemannian Manifold \cite{Cheeger}. 

The theorems of \cite{Cheeger} and \cite{Keith} generalize a number of earlier results.  The classical version of Rademacher's theorem states that a Lipschitz map between Euclidean spaces is differentiable almost everywhere.  Pansu \cite{Pansu} generalized the theorem to maps between Carnot groups, stratified Lie groups equipped with the so-called Carnot-Carath\'eodory metric.  Ambrosio and Kirchheim \cite{AmbrosioKirchheimRect} proved analogs of Rademacher's theorem for maps from Euclidean spaces into general metric spaces, using Banach spaces as an intermediary tool.  Cheeger and Kleiner \cite{CheegerKleiner} have recently  extended the original differentiation theorem from \cite{Cheeger} to Banach space-valued maps.

\subsection*{Metric currents and differentiation.}
We state our main results here, defer some of the more technical definitions to the preliminary material in Sections \ref{preliminaries}, \ref{currentdefinitions}, \ref{diffstructure}, and \ref{carnot}.

Our first results concern the compatibility of the theory of metric currents with the differentiable structures of \cite{Keith}.  The most fundamental of these is a compatibility theorem which states that metric forms that are equivalent in the sense of differentiable structures are also equivalent in the sense of currents, provided the current is concentrated where the forms are defined.

As preliminary notation, let 
\begin{equation*}
\omega=\sum_{s\in S} \beta_s \,dg_s^1 \wedge \dotsb\wedge dg_s^k\in\measextformsc{k}(X)\text{,}
\end{equation*}
where $S$ is finite, and let $Y$ be a coordinate patch.  Here $\measextformsc{k}(X)$ denotes a space of ``measurable metric $k$-forms'', defined precisely in Section \ref{currentdefinitions}, but which should intuitively be thought of as the space of differential forms with compact support and measurable coefficients.

Denote by $Y_\omega\subset Y$ the set of points $y\in Y$ such that all of the functions $g_s^i$, for $i=1,\dotsc,k$, $s\in S$  are differentiable at $y$, and such that
\begin{equation*}
\sum_s \beta(y)\,\diff{\pi} g_{s,y}^1\wedge\dotsb\wedge \diff{\pi} g_{s,y}^k = 0\text{.}
\end{equation*}

We then obtain the following compatibility theorem relating the theories of currents and differentiable structures.
\begin{theorem}
\label{representation}
Let $X = (X,d,\mu)$ be a locally compact metric measure space admitting a strong measured differentiable structure, let $(Y, \pi)$ be a chart, $\pi\colon Y\rightarrow \BBE$,
and let 
\begin{equation*}
\omega\in\measextformsc{k}(X)\text{.}
\end{equation*}

Then for every $T\in \currentsloc{k}(X)$ concentrated on $Y_\omega$, 
\begin{equation}
\label{vanishingcheeger}
T(\omega)=0\text{.}
\end{equation}
\end{theorem}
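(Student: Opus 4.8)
The plan is to reduce the statement to the case where $\omega$ is a single simple form $\beta\,dg^1\wedge\dotsb\wedge dg^k$ by linearity, and then to exploit the defining vanishing axiom for currents together with the Leibniz rule to replace the functions $g^i$ by their ``linearizations'' through the chart map $\pi$. Concretely, on the chart $(Y,\pi)$ with $\pi = (\pi_1,\dotsc,\pi_m)\colon Y\to\BBE\cong\BBR^m$, the differentiability hypothesis \eqref{infinitesimalintro} says that near almost every $y\in Y$ each $g_s^i$ agrees, up to first order, with an affine function of the coordinates $\pi_1,\dotsc,\pi_m$ with coefficients $\diff{\pi}g_{s,y}^i$. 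The heart of the argument is a \emph{localization/approximation step}: one should show that $T(\beta\,dg^1\wedge\dotsb\wedge dg^k)$ depends, in a suitably continuous way, only on the pointwise exterior products $\diff{\pi}g_{y}^1\wedge\dotsb\wedge\diff{\pi}g_{y}^k$ for $\mu$-a.e.\ $y$ in the set where $\beta$ is supported. Granting that, the hypothesis that these exterior products vanish on $Y_\omega$ — combined with $T$ being concentrated on $Y_\omega$ — forces $T(\omega)=0$.

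In more detail, I would proceed as follows. First, using a partition of unity and the locality properties of currents of locally finite mass, reduce to the situation where $\omega$ has small support contained in a single chart $Y$ on which $\pi$ is bi-Lipschitz onto its image (after possibly splitting $Y$ into countably many such pieces, and using that $T$ is concentrated on $Y_\omega$, so only the part of $T$ inside $Y$ matters). Second, approximate: fix $\varepsilon>0$, and by Lusin/Egorov-type arguments choose a compact set $K\subseteq Y_\omega$ with $\|T\|(Y_\omega\setminus K)<\varepsilon$ on which all the $y\mapsto \diff{\pi}g_{s,y}^i$ are continuous and the little-$o$ error in \eqref{infinitesimalintro} is uniform. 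On $K$, write each $g_s^i = \ell_s^i\circ\pi + r_s^i$, where $\ell_s^i$ is affine (with coefficients the derivative at a nearby base point) and $r_s^i$ is a small Lipschitz remainder; then expand $dg_s^1\wedge\dotsb\wedge dg_s^k$ multilinearly in the $d\ell_s^i\circ\pi$ and $dr_s^i$ terms. The terms involving at least one $dr_s^i$ are controlled by the mass bound $|T(f\,dh^1\wedge\dotsb\wedge dh^k)|\le \prod\lip(h^j)\int|f|\,d\|T\|$ and the smallness of $\lip(r_s^i)$ on $K$; the remaining ``principal'' term is a combination of forms $f\, d(\pi_{j_1})\wedge\dotsb\wedge d(\pi_{j_k})$ whose scalar coefficients are exactly the components of $\sum_s\beta\,\diff{\pi}g_{s,\cdot}^1\wedge\dotsb\wedge\diff{\pi}g_{s,\cdot}^k$, hence vanish $\|T\|$-a.e.\ on $K$. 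Using again the mass bound together with the vanishing axiom (to discard terms where a coordinate function is repeated), the principal term contributes $0$ up to an error of size $O(\varepsilon)$. Letting $\varepsilon\to 0$ gives \eqref{vanishingcheeger}.

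The main obstacle is making the approximation step rigorous in a metric space where we have no smoothing and the ``forms'' are merely tuples of Lipschitz functions: one must carefully track how the error in the first-order expansion \eqref{infinitesimalintro}, which is only a pointwise little-$o$, interacts with the mass measure $\|T\|$ and with the multilinear continuity axiom for $T$. The right tool is a covering argument — partition $K$ into finitely many pieces of small diameter, on each of which the affine approximations $\ell_s^i$ can be taken with \emph{fixed} coefficients — so that the Lipschitz constants of the remainders $r_s^i$ on each piece are $o(1)$ as the mesh shrinks, uniformly thanks to the equicontinuity arranged by Egorov. A secondary technical point is that cutting off $g_s^i$ outside $K$ to genuine globally Lipschitz functions (so that they define admissible arguments of $T$) must be done without increasing Lipschitz constants too much and without moving mass; this is handled by McShane extension together with the concentration of $T$ on $K$ up to the $\varepsilon$-error. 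Once these estimates are in place, the algebra of the multilinear expansion and the vanishing axiom do the rest.
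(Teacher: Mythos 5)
Your proposal follows essentially the same route as the paper's proof: an Egorov/Lusin step to extract a compact set carrying all but $\varepsilon$ of $\|T\|$ on which the first-order errors are uniformly small in Lipschitz constant and the differentials are continuous; a fine partition into pieces of small diameter with affine approximations $\langle \diff{\pi}g^i_{s,c_j},\pi\rangle$ frozen at base points; the mass inequality to discard the remainder terms; and finally multilinearity of $T$ to recognize the principal term as a pairing with $\sum_s \beta_s\,\diff{\pi}g^1_{s}\wedge\dotsb\wedge\diff{\pi}g^k_{s}$, which vanishes by hypothesis. One caution: your opening sentence proposes to ``reduce to a single simple form by linearity,'' but that reduction is not available --- the defining condition of $Y_\omega$ is that the \emph{sum} of the wedge products vanishes, and the individual summands $\beta_s\,\diff{\pi}g^1_{s}\wedge\dotsb\wedge\diff{\pi}g^k_{s}$ need not; fortunately your final step correctly applies the vanishing to the full sum over $s$, so this is a slip of phrasing rather than a structural defect, but it must not be executed literally. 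Likewise, splitting the chart so that $\pi$ is bi-Lipschitz onto its image is neither achievable in general nor needed anywhere in your argument, and the McShane extension you invoke can be avoided since the mass criterion only sees Lipschitz constants restricted to the support of the restricted current.
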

See Sections \ref{currentdefinitions} and \ref{diffstructure} for more precise definitions.

From Theorem \ref{representation}, we are able to derive a number of results, including the following generalization of the chain rule in \cite{AmbrosioKirchheim} to mappings into arbitrary spaces admitting differentiable structures.

\begin{theorem}
\label{cheegerchainrule}
Let $F\colon Z\rightarrow Y$ be a Lipschitz map, where $(Y,\pi)$ is a coordinate chart.  Let $\beta\in\boundedc(Z)$, and let $(g^1,\dotsc,g^k)\in \liploc(Y)^k$.  Let $Y_{\mathcal G}=\cap_{i=1}^k Y_{g^i}$. Then for any current $T\in \currents{k}(Z)$ such that $F_\#(T\restrict{\beta})$ is concentrated on $Y_{\mathcal G}$,  
\begin{align*}
&T(\beta \,d(g^1\circ F)\wedge\dotsb\wedge d(g^k\circ F))\\
=&  T\left(\sum_{a \in \Lambda_{k,n} }\beta \det\left(\frac{\partial g^i}{\partial \pi^{a_j}}\circ F\right) \,d (\pi^{a_1}\circ F) \wedge \dotsb \wedge d(\pi^{a_k} \circ F)\right)\text{.}
\end{align*}
\end{theorem}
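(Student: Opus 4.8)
The plan is to transfer the identity to the chart $Y$ by means of the push-forward and then apply Theorem \ref{representation}. Let $X$ denote the locally compact, differentiably structured space of which $(Y,\pi)$ is a chart, with $\pi$ taking values in $\BBE=\BBR^n$, and write $c_a=\det\left(\frac{\partial g^i}{\partial\pi^{a_j}}\right)$ for $a\in\Lambda_{k,n}$ (a measurable function on $Y_{\mathcal G}$, extended by $0$ to all of $Y$, and bounded on every compact subset of $Y$, where the $g^i$ are Lipschitz). Since $\beta\in\boundedc(Z)$, the current $T\restrict{\beta}$ has finite mass and compact support, so $R:=F_\#(T\restrict{\beta})\in\currentsloc{k}(X)$ is well defined, has compact support, and is concentrated on $Y_{\mathcal G}$ by hypothesis. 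Fix $\tau\in\lipc(Y)$ with $\tau\equiv 1$ on a neighborhood of the compact set $F(\supp\beta)$, so that $(\tau\circ F)\,\beta=\beta$. The defining relation of the push-forward, combined with $(\tau\circ F)\,\beta=\beta$, then gives
\begin{align*}
T\bigl(\beta\, d(g^1\circ F)\wedge\dotsb\wedge d(g^k\circ F)\bigr)&=R\bigl(\tau\, dg^1\wedge\dotsb\wedge dg^k\bigr)\text{,}\\
T\bigl(\beta\,(c_a\circ F)\, d(\pi^{a_1}\circ F)\wedge\dotsb\wedge d(\pi^{a_k}\circ F)\bigr)&=R\bigl(\tau c_a\, d\pi^{a_1}\wedge\dotsb\wedge d\pi^{a_k}\bigr)
\end{align*}
for each $a\in\Lambda_{k,n}$. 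Hence the claimed identity is equivalent to $R(\tau\omega)=0$, where
\begin{equation*}
\omega=dg^1\wedge\dotsb\wedge dg^k-\sum_{a\in\Lambda_{k,n}}c_a\, d\pi^{a_1}\wedge\dotsb\wedge d\pi^{a_k}\text{,}
\end{equation*}
and $\tau\omega\in\measextformsc{k}(X)$ since its coefficients $\tau$ and $\tau c_a$ are bounded, measurable, and compactly supported.

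It remains to check that $R$ is concentrated on the set $Y_{\tau\omega}$ of Theorem \ref{representation}. Each coordinate function $\pi^j$ is differentiable at \emph{every} point $y\in Y$, with $\diff{\pi}\pi^j_y$ equal to the $j$-th coordinate covector; this is immediate from \eqref{infinitesimalintro}, since $\pi^j(x)-\pi^j(y)$ is exactly the $j$-th component of $\pi(x)-\pi(y)$. Therefore, at any point $y$ at which the $g^i$ are also differentiable --- in particular at every $y\in Y_{\mathcal G}$ --- we have $\diff{\pi}g^i_y=\sum_{j=1}^n\frac{\partial g^i}{\partial\pi^j}(y)\,\diff{\pi}\pi^j_y$ by definition of the partial derivatives, and expanding the exterior product by multilinearity and alternation (a standard computation in exterior algebra) gives
\begin{align*}
\diff{\pi}g^1_y\wedge\dotsb\wedge\diff{\pi}g^k_y&=\sum_{a\in\Lambda_{k,n}}\det\left(\frac{\partial g^i}{\partial\pi^{a_j}}(y)\right)\diff{\pi}\pi^{a_1}_y\wedge\dotsb\wedge\diff{\pi}\pi^{a_k}_y\\
&=\sum_{a\in\Lambda_{k,n}}c_a(y)\,\diff{\pi}\pi^{a_1}_y\wedge\dotsb\wedge\diff{\pi}\pi^{a_k}_y\text{.}
\end{align*}
Multiplying by $\tau(y)$ shows that the differential of $\tau\omega$ vanishes at $y$, so $y\in Y_{\tau\omega}$. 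Thus $Y_{\mathcal G}\subseteq Y_{\tau\omega}$, and since $R$ is concentrated on $Y_{\mathcal G}$ it is concentrated on $Y_{\tau\omega}$. Theorem \ref{representation} now yields $R(\tau\omega)=0$, completing the proof.

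Almost all of the substance is packaged in Theorem \ref{representation}; the rest is bookkeeping, but two points deserve care. The first is the package of push-forward manipulations in the opening step: these rely only on the compact support of $\beta$, which makes $T\restrict{\beta}$ a compactly supported finite-mass current and legitimizes $F_\#(T\restrict{\beta})$ together with the usual functorial formula, and one must also verify that $\tau c_a$ is bounded so that $\tau\omega$ genuinely lies in $\measextformsc{k}(X)$. The second, and subtler, point is that it is $Y_{\mathcal G}$ \emph{itself}, not merely some subset of full $\mu$-measure, that is contained in $Y_{\tau\omega}$; this matters because $\|R\|$ need not be absolutely continuous with respect to $\mu$, and the argument uses essentially that the coordinate functions $\pi^j$ are differentiable everywhere in the chart and that the exterior-algebra identity holds pointwise at every common differentiability point of the $g^i$. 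The exterior-algebra computation and the cutoff argument are otherwise routine, so I expect these two points to be the only real obstacles.
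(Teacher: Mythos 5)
Your proof is correct and follows the paper's route exactly: the paper proves this theorem by applying Corollary \ref{repk} (itself a one-line consequence of Theorem \ref{representation} applied to the difference form $dg^1\wedge\dotsb\wedge dg^k-\sum_a c_a\,d\pi^{a_1}\wedge\dotsb\wedge d\pi^{a_k}$) to the pushed-forward current $F_\#(T\restrict{\beta})$, which is precisely your argument with the cutoff $\tau$ and the pointwise containment $Y_{\mathcal G}\subseteq Y_{\tau\omega}$ made explicit. Your added care on those two points --- and on the local boundedness of the determinants $c_a$, which the paper leaves implicit in the statement of Corollary \ref{repk} --- is welcome but does not change the structure of the argument.
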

The terminology will be explained precisely in the discussion in Section \ref{representationtheorem} preceding Corollary \ref{repk}.  Loosely speaking, one should think of the functions $\frac{\partial g^i}{\partial \pi^{a_j}}$ as coordinates of the Cheeger differential $\diff{\pi}g^i$, in analogy with partial derivatives in $\BBR^n$.

Theorem \ref{cheegerchainrule} was proved for $Y=\BBR^n$ in \cite[Theorem 3.5 (i)]{AmbrosioKirchheim}, and then generalized \cite[Theorem 2.5]{Lang} to currents that might fail the finite mass axiom.  In both versions, $C^1$-smoothness is assumed for the functions $g^i$, so that $Y_{g^i}=\BBR^n$ by hypothesis.  As a result, there are no hypotheses necessary regarding the measure $||T||$. In contrast, the locally-finite-mass assumption of Theorem \ref{cheegerchainrule} is irremovable; a Lipschitz function need not be differentiable everywhere, and so we must require our current to be concentrated on a set where it is. 

We also prove that certain currents are given by integration against a vector measure. This is already known to be true for all Federer-Fleming currents of locally finite mass \cite[4.1.5]{Federer}, and thus it also holds for all metric currents in $\BBR^n$ as well, via an embedding given in \cite[Theorem 5.5]{Lang}.  In our generality, we are forced to restrict our attention to currents with absolutely continuous mass relative to the underlying measure $\mu$ of the space $X$.

We define a \textit{$k$-precurrent} $T$ to be a functional on the space of metric forms given by integration against a measurable ``$k$-vector field'' $\hat{\lambda}\colon Y \rightarrow \bigwedge^k \BBE$.
\begin{equation*}
T(\beta \,d g^1 \wedge \dotsb \wedge d g^m)  =\int_Y \langle \beta\,\diff{\pi} g^1 \wedge \dotsb \wedge \diff{\pi} g^k,\hat{\lambda}\rangle \, d\mu\text{.}
\end{equation*}
\begin{theorem}
\label{vectormeasure}
Let $X=(X,\dist,\mu)$ be a metric measure space admitting a differentiable structure.  Then every metric $k$-current $T$ in $X$, with $||T||\ll \mu$, is a $k$-precurrent. 
\end{theorem}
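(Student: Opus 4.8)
The plan is to work one coordinate chart at a time, to build the $k$-vector field $\hat\lambda$ out of finitely many Radon--Nikodym derivatives, and then to verify the precurrent formula on generating forms by invoking the chain rule of Theorem~\ref{cheegerchainrule}. First I would fix a chart $(Y,\pi)$, $\pi=(\pi^1,\dots,\pi^n)\colon Y\to\BBE=\BBR^n$, and observe that, since the charts cover $X$ up to a $\mu$-null set and $\|T\|\ll\mu$, it is enough to produce, for each chart, a measurable $\hat\lambda\colon Y\to\bigwedge^{k}\BBE$ representing $T$ on forms supported in $Y$; the assembly of these into the global statement is routine bookkeeping.

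For each increasing multi-index $a=(a_1,\dots,a_k)\in\Lambda_{k,n}$, I would consider the functional $\beta\mapsto T(\beta\,d\pi^{a_1}\wedge\dotsb\wedge d\pi^{a_k})$ on $\boundedc(Y)$. By the continuity and locality axioms for currents, together with the mass estimate $|T(\beta\,d\pi^{a_1}\wedge\dotsb\wedge d\pi^{a_k})|\le(\lip\pi)^{k}\int|\beta|\,d\|T\|$, this functional is represented by a signed Radon measure $\nu_a$ with $|\nu_a|\le(\lip\pi)^{k}\|T\|$. Since $\|T\|\ll\mu$, the Radon--Nikodym theorem yields $\lambda_a\in L^1_{\operatorname{loc}}(Y,\mu)$ with $\nu_a=\lambda_a\,\mu$, i.e.
\begin{equation*}
T(\beta\,d\pi^{a_1}\wedge\dotsb\wedge d\pi^{a_k})=\int_Y\beta\,\lambda_a\,d\mu\qquad\text{for all }\beta\in\boundedc(Y)\text{.}
\end{equation*}
Writing $e_1,\dots,e_n$ for the standard basis of $\BBE$, I would then define the measurable $k$-vector field
\begin{equation*}
\hat\lambda(y):=\sum_{a\in\Lambda_{k,n}}\lambda_a(y)\,e_{a_1}\wedge\dotsb\wedge e_{a_k}\in\bigwedge\nolimits^{k}\BBE\text{,}
\end{equation*}
which will be the object in the definition of a $k$-precurrent.

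It then remains to check that $\hat\lambda$ computes $T$ on an arbitrary generator $\beta\,dg^1\wedge\dotsb\wedge dg^k$ with $\beta\in\boundedc(Y)$ and $g^i\in\liploc(Y)$; by multilinearity no limiting argument is needed. On $Y_{\mathcal G}=\bigcap_{i}Y_{g^i}$ one has $\diff{\pi}g^i_y=\sum_j\bigl(\partial g^i/\partial\pi^j\bigr)(y)\,e_j^{*}$, and expanding the wedge product in coordinates gives, for $y\in Y_{\mathcal G}$,
\begin{equation*}
\bigl\langle\beta(y)\,\diff{\pi}g^1_y\wedge\dotsb\wedge\diff{\pi}g^k_y,\hat\lambda(y)\bigr\rangle=\sum_{a\in\Lambda_{k,n}}\beta(y)\det\bigl((\partial g^i/\partial\pi^{a_j})(y)\bigr)\lambda_a(y)\text{.}
\end{equation*}
Because $\|T\|\ll\mu$ and $Y\setminus Y_{\mathcal G}$ is $\mu$-null, the restriction $T\restrict{\beta}$ is concentrated on $Y_{\mathcal G}$, so Theorem~\ref{cheegerchainrule} (with $Z=Y$ and $F$ the identity) applies and identifies $T(\beta\,dg^1\wedge\dotsb\wedge dg^k)$ with $T\bigl(\sum_{a}\beta\det(\partial g^i/\partial\pi^{a_j})\,d\pi^{a_1}\wedge\dotsb\wedge d\pi^{a_k}\bigr)$. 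Since each coefficient $\beta\det(\partial g^i/\partial\pi^{a_j})$ again lies in $\boundedc(Y)$, the defining property of the $\lambda_a$ turns this into $\sum_{a}\int_Y\beta\det(\partial g^i/\partial\pi^{a_j})\,\lambda_a\,d\mu$, which by the displayed pointwise identity is precisely $\int_Y\langle\beta\,\diff{\pi}g^1\wedge\dotsb\wedge\diff{\pi}g^k,\hat\lambda\rangle\,d\mu$. This is the asserted precurrent formula.

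The substance of the argument is carried by Theorem~\ref{cheegerchainrule}, and hence ultimately by Theorem~\ref{representation}: the one delicate step is the passage from the differentials $dg^i$ to the coordinate forms $d\pi^{a_j}$, which is valid only $\mu$-almost everywhere, and this is exactly where the hypothesis $\|T\|\ll\mu$ is indispensable --- it guarantees that $T$ does not charge the $\mu$-null set on which the Lipschitz functions $g^i$ can fail to be differentiable. The remaining ingredients --- local finiteness of the measures $\nu_a$, the Radon--Nikodym step, and the reduction to a single chart --- are routine.
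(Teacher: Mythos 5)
Your proposal is correct and follows essentially the same route as the paper: the paper likewise obtains the coefficients $\lambda^a$ by representing the $0$-currents $T\restrict{d\pi^{a_1}\wedge\dotsb\wedge d\pi^{a_k}}$ as measures (Lemma \ref{0currents}) and converting to $\mu$-densities via $||T||\ll\mu$, and then reduces general forms to coordinate forms using Corollary \ref{repk}, which is exactly your invocation of Theorem \ref{cheegerchainrule} with $F$ the identity. The only cosmetic difference is that you phrase the representation step via Riesz plus Radon--Nikodym rather than via the $L^\infty(X,||T||)$ density of Lemma \ref{0currents}; the substance is identical.
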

The converse of Theorem \ref{vectormeasure} is true in Euclidean space \cite{AmbrosioKirchheim}, but, as we shall see in Theorem \ref{characterization}, precurrents need not be currents in the general case.  

\subsection*{Carnot groups.}

A Carnot group is a simply connected nilpotent Lie group $\BBG$ whose Lie algebra $\mathfrak g$ admits a stratification 
$$\mathfrak g = V_1 \oplus \dotsb \oplus V_n$$
such that $[V_i,V_j] = V_{i+j}$ with the convention that $V_m = 0$ for $m>n$.  The subspace $H=V_1$, together with its left translates, forms what is known as the horizontal bundle.  A theorem of Chow and Rashevsky 
says that any two points in a Carnot group can be joined by a path whose velocity is horizontal at each point.  This leads to a natural definition of a metric on Carnot groups, the so-called Carnot-Carath\'eodory metric $\cdist(p,q)$, given by the shortest horizontal path between $p$ and $q$  (with respect to some invariant Riemannian metric).

The simplest non-Riemannian examples of Carnot groups are the Heisenberg groups $\heis{n}$.  The Lie algebra of $\heis{n}$ is spanned by vector fields $X_1$,\ldots, $X_n$, $Y_1$,\ldots, $Y_n$, and $Z$ satisfying $[X_i,Y_j] = \delta^i_j Z$ and $[X_i,Z]=[Y_j,Z]=0$, and thus admits a stratification 
$$ \spa(X_1,\dotsc,X_n,Y_1,\dotsc,Y_n)\oplus \spa(Z)\text{.}$$
The geometry of $\heis{n}$, equipped with its Carnot-Carath\'eodory metric, is highly non-smooth.  One can show, for example, that its topological dimension is $2n+1$, whereas its Hausdorff dimension is $2n+2$ \cite{Gromov}.  
In spite of this,
Jerison \cite{Jerison} proved that a Carnot group satisfies a Poincar\'e inequality, and so by the result of \cite{Cheeger}, it admits a differentiable structure.  The earlier differentiation theorem of Pansu \cite{Pansu} actually gives an explicit formulation of this structure.  In fact, Cheeger \cite{Cheeger} and Weaver \cite{Weaver} showed that Cheeger's cotangent bundle is given by the dual to the horizontal sub-bundle of the classical tangent bundle.

\subsection*{Currents in Carnot groups.}

The non-commutativity of Carnot groups will prevent certain precurrents from satisfying the continuity axiom for currents.  Our main result characterizes exactly which precurrents are currents in a given Carnot group.

First we must remark that in Carnot groups, there is a natural way to apply metric currents and precurrents to differential forms, as such forms can be written as linear combinations of the form $f\wedge dg^1\wedge \dotsb\wedge dg^k$.  We describe this in more detail in Definitions \ref{smoothformdef} and \ref{smoothrestrictions} (compare also the embeddings in \cite[Theorem 5.5]{Lang} and \cite[Theorem 11.1]{AmbrosioKirchheim}).

Let $\theta\in \diffforms{1}(\BBG)$.  We say that $\theta$ is vertical if $\theta$ annihilates the horizontal bundle of $\BBG$.  For example, the contact form in a Heisenberg group is a vertical form. 
\begin{theorem}
\label{characterization}
Let $\BBG=(\BBG,d_{CC},\mu)$ be a Carnot group, equipped with its Carnot-Carath\-\'eodory metric $d_{CC}$ and Haar measure $\mu$, and let $k\geq 2$.  Then a $k$-precurrent $T$ is a current if and only if 
\begin{equation}
\label{chareq}
T(\theta\wedge\alpha+d\theta\wedge\beta)=0
\end{equation}
whenever $\theta\in\diffforms{1}(\BBG)$ is vertical, $\alpha\in \diffforms{k-1}(\BBG)$, and $\beta\in\diffforms{k-2}(\BBG)$.  Moreover, every current in $T\in\currentsloc{k}(\BBG)$ satisfies equation \eqref{chareq}. 
\end{theorem}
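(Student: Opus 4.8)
I would establish the final assertion first, since the forward implication of the equivalence is its special case (a precurrent being in particular a current), and then prove the converse. \textbf{Every $T\in\currentsloc{k}(\BBG)$ satisfies \eqref{chareq}.} Fix a vertical $\theta$, $\alpha\in\diffforms{k-1}(\BBG)$ and $\beta\in\diffforms{k-2}(\BBG)$; localizing by a partition of unity, I may assume everything is compactly supported. The structural point is that each of $\theta\wedge\alpha$, $\theta\wedge d\beta$ and $\theta\wedge\beta$, while not literally of the form $f\,dg^1\wedge\dotsb$, is a \emph{difference of two bona fide metric forms (with entries in $\liploc$) whose Cheeger differentials agree pointwise}, so that Definition \ref{smoothrestrictions} pairs it with $T$ via that difference. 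Indeed, a vertical $1$-form is, stratum by stratum, a difference $d\zeta-\eta$ in which $\zeta$ is a vertical exponential coordinate --- locally Lipschitz, since its horizontal gradient is polynomial in the lower coordinates --- and $\eta=\sum_a(X_a\zeta)\,dx_a$ has locally Lipschitz coefficients, and $\diff{\pi}\zeta=\eta$ while all functions involved are smooth, hence differentiable everywhere. Consequently the Cheeger differential of $\theta\wedge\alpha$ vanishes at every point, so $Y_{\theta\wedge\alpha}=\BBG$, every current is trivially concentrated there, and Theorem \ref{representation}, applied to the two pieces, gives $T(\theta\wedge\alpha)=0$; likewise $T(\theta\wedge d\beta)=0$. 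Writing $d\theta\wedge\beta=d(\theta\wedge\beta)+\theta\wedge d\beta$ and using $T(d\eta)=\partial T(\eta)$ on each bona fide piece, it remains to see that $\partial T(\theta\wedge\beta)=0$; but $\partial T$ lies in $\lcurrents{k-1}(\BBG)$, inheriting multilinearity, continuity and locality from $T$ even if it fails the finite-mass axiom, and --- as I expect the proof of Theorem \ref{representation} to use only those three properties --- the same argument applies to $\partial T$ and yields $\partial T(\theta\wedge\beta)=0$. Summing the pieces gives \eqref{chareq}.

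\textbf{A precurrent satisfying \eqref{chareq} is a current.} Let $T$ correspond to a measurable horizontal $k$-vector field $\hat\lambda\colon\BBG\to\bigwedge^{k}\BBE$, $\BBE\cong V_1$. Multilinearity and locality are immediate from the defining integral and $\|T\|$ is locally finite since $\hat\lambda\in L^1_{\mathrm{loc}}$, so the only axiom to verify is continuity: given $g^i_j\to g^i$ pointwise with locally uniformly bounded Lipschitz constants and $\beta\in\boundedc(\BBG)$, one needs $T(\beta\,dg^1_j\wedge\dotsb\wedge dg^k_j)\to T(\beta\,dg^1\wedge\dotsb\wedge dg^k)$. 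By Arzel\`a--Ascoli the convergence is locally uniform, so $X_a g^i_j\to X_a g^i$ weak-$*$ in $L^\infty$ for every left-invariant horizontal field $X_a$, and the integrand $\langle\beta\,\diff{\pi}g^1_j\wedge\dotsb\wedge\diff{\pi}g^k_j,\hat\lambda\rangle$ is a fixed sum, weighted by the components of $\hat\lambda$, of $k\times k$ ``Jacobian minors'' in these functions. I would argue by induction on $k$. For a minor whose underlying horizontal fields pairwise commute, one Laplace expansion (a Piola-type identity) writes it as $\sum_a X_a(\Psi_a)$ with each $\Psi_a$ a product of one $g^i_j$ and a $(k-1)$-minor, the latter handled by the inductive hypothesis, so these minors are weak-$*$ continuous against $\beta\hat\lambda$ by the strong$\times$weak-$*$ mechanism (for smooth $\hat\lambda$; see below). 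For a minor involving a non-commuting pair, the same expansion produces, in addition, a remainder made of terms $h\,g^i\,Z_\gamma g^{i'}$ with $Z_\gamma$ a vertical field, coming from the brackets $[X_a,X_b]$. Here is where \eqref{chareq} enters: summed over all minors, the coefficient multiplying each $Z_\gamma g^{i'}$ is precisely a contraction of $\hat\lambda$ with a curvature form $d\theta$, $\theta$ vertical --- whose leading horizontal part is the constant $2$-form $\sum_{a<b}c^\gamma_{ab}\,dx_a\wedge dx_b$ encoding $[X_a,X_b]$ --- so \eqref{chareq} forces the remainder to vanish identically. (For the higher strata one iterates, peeling off one layer of brackets at a time and invoking \eqref{chareq} for the corresponding vertical form.) Hence the integrand is weak-$*$ continuous against $\beta\hat\lambda$, which is the continuity axiom.

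The integrations by parts underlying the strong$\times$weak-$*$ step are legitimate only for smooth $\hat\lambda$, so I would finish by approximation. Condition \eqref{chareq} says exactly that $\hat\lambda$ takes values a.e.\ in the smooth ``primitive'' subbundle of $\bigwedge^{k}V_1$ cut out by the curvature contractions, so mollifying $\hat\lambda$ and composing with the smooth fibrewise projection onto that subbundle yields smooth $\hat\lambda_\varepsilon$ still satisfying \eqref{chareq} with $\hat\lambda_\varepsilon\to\hat\lambda$ in $L^1_{\mathrm{loc}}$; each $T_\varepsilon$ is then a current by the preceding paragraph, the masses $\|T_\varepsilon\|$ are locally uniformly bounded, and $T_\varepsilon\to T$ pointwise on metric forms, so a three-$\varepsilon$ argument passes continuity to $T$. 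The main obstacle is the algebraic core of the second implication: producing the Piola-type identity in the Carnot group and pinning down the coefficients of its bracket remainder as curvature contractions of $\hat\lambda$ --- equivalently, showing that non-integrability of the horizontal bundle is the \emph{only} obstruction to weak-$*$ continuity of these minors and that a current registers it exactly through \eqref{chareq}. Once that identity is in place, the low regularity of $\hat\lambda$ and the treatment of non-normal currents in the first implication (via $\partial T\in\lcurrents{k-1}$) are comparatively routine.
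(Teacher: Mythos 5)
There is a genuine gap in your proof of the final assertion, at the step where you apply Theorem \ref{representation} to $\partial T$. That theorem's proof does not use only multilinearity, continuity and locality: it is built on the mass measure $||T||$ throughout (the set $Z$ is produced by Lemma \ref{uniformcontrol} with $\nu=||T||$, the current is restricted to the Borel pieces $C_j$, and every error term is bounded by $\eta(r)\,||T||(Z)$ via the mass criterion \eqref{masscriterion}); even the hypothesis ``concentrated on $Y_\omega$'' is a statement about $||T||$. Since $\partial T$ need not have locally finite mass, none of this applies, and $\partial T(\theta\wedge\beta)=0$ is left unproved. This is not a removable technicality: in the paper that vanishing is exactly what the implication $1\Rightarrow 2$ of Proposition \ref{currentsk} supplies (after the reductions of Lemma \ref{pointwisecurrent} and Proposition \ref{approximationlemma}), and its proof there is a global tiling/rescaling argument comparing $\#N\cdot\epsilon^k$ against $\epsilon^{-Q}$ --- it plays the continuity axiom of $T$ itself off against the homogeneity of $\BBG$, rather than differentiating pointwise. (Your direct argument that $T(\theta\wedge\alpha)=0$ and $T(\theta\wedge d\beta)=0$ for any $T$ of locally finite mass, by exhibiting a form containing a vertical factor as a difference of metric forms with identical Cheeger differentials and invoking Theorem \ref{representation}, is sound and is a reasonable alternative to the paper's route through Proposition \ref{approximationlemma} and Theorem \ref{vectormeasure}.)

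The converse direction as you present it is a genuinely different strategy, but its core is asserted rather than proved. You reduce the continuity axiom to weak-$*$ continuity of horizontal Jacobian minors and posit a Laplace/Piola-type expansion whose bracket remainder is to be identified with curvature contractions of $\hat\lambda$ annihilated by \eqref{chareq}; you flag this identity yourself as the main obstacle, and for groups of step $m>2$ the instruction to ``peel off one layer of brackets at a time'' is not an argument. The paper sidesteps the computation: Lemma \ref{pointwisecurrent} reduces everything to the blow-ups $T^p$, which are invariant precurrents, and Proposition \ref{currentsk} characterizes which of these are currents via the explicit boundary formula \eqref{boundarycomputation}, in which the brackets $[u_{a_i},u_{a_j}]\in V_2$ enter only through $\partial T$, so that \eqref{chareq} need only be tested against constant vertical $1$-forms and no iteration over higher strata arises. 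If you wish to pursue your route, note that your ``curvature remainder'' is precisely the invariant $(k-1)$-vector field in $V_2\wedge\bigl(\bigwedge^{k-2}H\bigr)$ of equation \eqref{boundaryk}, and the blow-up is what lets one see it with $\hat\lambda$ frozen at a Lebesgue point instead of mollified.
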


In the case $\BBG=\BBR^n$, the horizontal bundle is the entire space, so that only $0\in\diffforms{1}(\BBR^n)$ is vertical, whereby Theorem \ref{characterization} reduces to a known result \cite[Theorem 3.8]{AmbrosioKirchheim} that every precurrent in $\BBR^n$ is a current.  From the bi-Lipschitz invariance of both the classes of metric currents and precurrents, it follows immediately that the same is true for metric currents in a Carnot group equipped with a \textit{Riemannian} metric (or, for that matter, any rectifiable set equipped with its Hausdorff measure). We therefore have the following corollary to Theorem \ref{characterization}.

\begin{corollary}
\label{absisomorphism}
Let $\BBG_R=(\BBG,\rdist,\mu)$, $\BBG_{CC}=(\BBG,\cdist,\mu)$ be the Carnot group $\BBG$ equipped, respectively, with Riemannian and Carnot-Carath\'eodory distances, and Haar measure $\mu$.  Then the pushforward map $I_{\#}\colon \currentsloc{k}(\BBG_{CC})\rightarrow\currentsloc{k}(\BBG_R)$ induced by the identity map $I\colon \BBG_{CC}\rightarrow\BBG_R$ restricts to an isomorphism
\begin{equation*}
I_\#|_{\currentslocabs{k}(\BBG_{CC})}\colon \currentslocabs{k}(\BBG_{CC})\rightarrow \currentslocabsvanishing{k}(\BBG_R)\text{,}
\end{equation*}
where
\begin{equation*}
\currentslocabs{k}(\BBG_{CC})=\{T\in\currentsloc{k}(\BBG_{CC}): ||T||\ll \mu\}
\end{equation*}
and
\begin{equation*}
\currentslocabsvanishing{k}(\BBG_R)=\{T\in\currentsloc{k}(\BBG_R): ||T||\ll \mu\text{ and $T$ satisfies \eqref{chareq}.}\}
\end{equation*}
\end{corollary}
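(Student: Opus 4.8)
The plan is to recast the statement in the language of precurrents and read it off from Theorems \ref{vectormeasure} and \ref{characterization}, using that $\BBG_{CC}$ and $\BBG_R$ have the same underlying set and Haar measure together with differentiable structures that match up after an evident identification. First I would handle the soft properties of $I_\#$. Since $\cdist$ and $\rdist$ induce the same topology, $I\colon\BBG_{CC}\to\BBG_R$ is a proper homeomorphism, and since $\rdist\le\cdist$ it is $1$-Lipschitz; hence $I_\#$ is a well-defined linear map on $\currentsloc{k}(\BBG_{CC})$ with $||I_\# T||\le||T||$ as measures on $\BBG$, so it preserves absolute continuity of mass with respect to $\mu$. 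Moreover, a form $\theta\wedge\alpha+d\theta\wedge\beta$ with $\theta,\alpha,\beta$ smooth is Lipschitz for both metrics, so $I_\# T$ and $T$ take the same value on it, which is $0$ by the last assertion of Theorem \ref{characterization}; thus $I_\#$ carries $\currentslocabs{k}(\BBG_{CC})$ into $\currentslocabsvanishing{k}(\BBG_R)$. (For $k\le1$ the argument is similar and easier, with Theorem \ref{representation} supplying the needed vanishing in \eqref{chareq}, a vertical $1$-form having zero Cheeger differential on $\BBG_{CC}$.)

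Next I would record how the two differentiable structures fit together. In exponential coordinates identify $\BBG$ with $V_1\oplus\dotsb\oplus V_n$, take the first-layer projection $\pi_{CC}\colon\BBG\to V_1$ as the (global) chart for $\BBG_{CC}$ --- valid by Pansu's theorem --- and the identity $\pi_R\colon\BBG\to V_1\oplus\dotsb\oplus V_n$ as the chart for $\BBG_R$; by the work of Pansu, Cheeger, and Weaver \cite{Pansu,Cheeger,Weaver} the Pansu differential of a Lipschitz function is the restriction to $H\cong V_1$ of its classical differential. A precurrent is determined by its values on the forms $\beta\,d\pi^{a_1}\wedge\dotsb\wedge d\pi^{a_k}$ with $\beta\in\contc(\BBG)$, so these facts yield: if $\hat\lambda$ takes values in $\bigwedge^k V_1\subseteq\bigwedge^k(V_1\oplus\dotsb\oplus V_n)$, then the $\BBG_{CC}$-precurrent and the $\BBG_R$-precurrent with density $\hat\lambda$ agree on every form with smooth coefficients --- indeed, pairing $dg^1\wedge\dotsb\wedge dg^k$ with a horizontal $k$-vector is unaffected by restricting the covectors to $H$. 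Combining this with Theorem \ref{vectormeasure} and the first paragraph, every $T\in\currentslocabs{k}(\BBG_{CC})$ has a density $\hat\lambda_T\colon\BBG\to\bigwedge^k V_1$, and $I_\# T$ is precisely the $\BBG_R$-precurrent with this same density.

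To produce the inverse, start from $S\in\currentslocabsvanishing{k}(\BBG_R)$; Theorem \ref{vectormeasure} presents it as a precurrent with density $\hat\nu_S\colon\BBG\to\bigwedge^k(V_1\oplus\dotsb\oplus V_n)$, and in fact $\hat\nu_S$ takes values in $\bigwedge^k H$: applying \eqref{chareq} with $\beta=0$ and with $\theta$ running over the left-invariant vertical $1$-forms, then expanding $\alpha$ in the left-invariant coframe, forces every component of $\hat\nu_S$ along a basis $k$-vector meeting $V_2\oplus\dotsb\oplus V_n$ to vanish. Let $T$ be the $\BBG_{CC}$-precurrent with density $\hat\nu_S$; by the previous paragraph $T$ agrees with $S$ on all forms with smooth coefficients, so $T$ satisfies \eqref{chareq} because $S$ does, whence $T\in\currentsloc{k}(\BBG_{CC})$ by Theorem \ref{characterization} (for $k\ge2$), and $||T||\ll\mu$ automatically. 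By the previous paragraph again, $I_\# T$ and $S$ are $\BBG_R$-precurrents with the same density, hence $I_\# T=S$; and run the other way, the same construction applied to $I_\# T$ returns $T$, the density being unchanged. Therefore $I_\#$ restricts to a linear bijection, i.e.\ the asserted isomorphism.

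The step I expect to demand the most care is the bookkeeping of the second and third paragraphs linking the two differentiable structures: one must be scrupulous about the identification $H\cong V_1$, about the distinction between the Pansu and the classical differential, and about which Lipschitz functions are admissible test objects for which metric. The one non-formal input beyond Theorems \ref{vectormeasure} and \ref{characterization} is the observation in the third paragraph that the vertical-annihilation half of \eqref{chareq} already forces a $\BBG_R$-current's density to be horizontal --- this is what puts the densities on both sides into the common space of $\bigwedge^k V_1$-valued fields and makes $I_\#$ a bijection there.
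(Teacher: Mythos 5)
Your proposal is correct and follows essentially the route the paper intends: the corollary is read off from Theorems \ref{vectormeasure} and \ref{characterization} by identifying currents of absolutely continuous mass on either side with precurrents and matching their densities through the identification of the Pansu differential with the restriction of the classical differential to $H$. The one substantive detail you supply that the paper leaves implicit --- that the $\theta\wedge\alpha$ half of \eqref{chareq} forces the $\BBG_R$-density to take values in $\bigwedge^k V_1$, so that both sides are parametrized by horizontal $k$-vector fields --- is exactly the right observation and is handled correctly.
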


We do not know if the condition of absolute continuity of the mass can be removed entirely from Corollary \ref{absisomorphism}.  However, we are able to replace it with normality.  A metric current $T\in\currentsloc{k}(X)$ is \textit{locally normal} (written $T\in\normalloc{k}(X)$) if its boundary has locally finite mass, i.e., $\partial T\in\currentsloc{k}(X)$.  Let
\begin{equation*}
\normallocvanishing{k}(\BBG_R)=\{T\in\normalloc{k}(\BBG_R): \text{$T$ satisfies \eqref{chareq}.}\}
\end{equation*}
\begin{theorem}
\label{normalisomorphism}
The pushforward map $I_{\#}\colon \currentsloc{k}(\BBG_{CC})\rightarrow\currentsloc{k}(\BBG_R)$ 
restricts to an isomorphism $I_\#|_{\normalloc{k}(\BBG_{CC})}\colon \normalloc{k}(\BBG_{CC})\rightarrow \normallocvanishing{k}(\BBG_R)$.
\end{theorem}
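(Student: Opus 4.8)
The plan is to handle the two halves of the claimed isomorphism very asymmetrically. Write $I\colon\BBG_{CC}\to\BBG_R$ for the identity map; since $\rdist\le\cdist$ it is $1$-Lipschitz, so $I_\#$ is defined, and because $I$ is the identity on the underlying set one has $I_\#S(f\,dg^1\wedge\dotsb\wedge dg^k)=S(f\,dg^1\wedge\dotsb\wedge dg^k)$ whenever the $g^i$ are $\rdist$-Lipschitz --- in particular whenever they are smooth, since every locally $\rdist$-Lipschitz function is locally $\cdist$-Lipschitz. That $I_\#$ carries $\normalloc{k}(\BBG_{CC})$ into $\normallocvanishing{k}(\BBG_R)$ is then immediate: $\partial(I_\#S)=I_\#(\partial S)$ has locally finite mass because $\|I_\#(\partial S)\|\le I_\#\|\partial S\|$, and $I_\#S$ satisfies \eqref{chareq} because it agrees with $S$ on smooth forms, which satisfy \eqref{chareq} by the ``moreover'' clause of Theorem~\ref{characterization}.

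Injectivity I would obtain by a mollification/density argument, which in fact shows $I_\#$ is injective on all of $\currentsloc{k}(\BBG_{CC})$. Fix a smooth compactly supported kernel and set $g_\epsilon(x)=\int\phi_\epsilon(y)\,g(y^{-1}x)\,d\mu(y)$; since the horizontal fields are left invariant, $X_j g_\epsilon=\phi_\epsilon*(X_j g)$, so each $g_\epsilon$ is smooth, $\littlelip{CC}(g_\epsilon)\le\littlelip{CC}(g)$ on any fixed compact set, and $g_\epsilon\to g$ uniformly on compacta. Then the smooth-entry forms $f_\epsilon\,dg^1_\epsilon\wedge\dotsb\wedge dg^k_\epsilon$ converge to $f\,dg^1\wedge\dotsb\wedge dg^k$ in the sense of the continuity axiom, so $S(f\,dg^1\wedge\dotsb\wedge dg^k)=\lim_\epsilon(I_\#S)(f_\epsilon\,dg^1_\epsilon\wedge\dotsb\wedge dg^k_\epsilon)$ is determined by $I_\#S$.

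The content of the theorem is surjectivity. Given $T\in\normallocvanishing{k}(\BBG_R)$ I would define, by induction on $k$, a candidate lift by
\begin{equation*}
\tilde T(f\,dg^1\wedge\dotsb\wedge dg^k):=\lim_{\epsilon\to0}T(f_\epsilon\,dg^1_\epsilon\wedge\dotsb\wedge dg^k_\epsilon)\text{,}
\end{equation*}
with $\partial\tilde T$ taken to be the lift of $\partial T$, which is again locally normal and, one checks using the $d\theta$ term of \eqref{chareq}, again satisfies \eqref{chareq}, so the induction closes (for $k=0$, currents are measures and the lift is trivial). Condition \eqref{chareq} is precisely what makes this viable. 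Decomposing $dg^i_\epsilon=d_Hg^i_\epsilon+v^i_\epsilon$ into horizontal and vertical parts, every term of the expanded wedge containing some $v^i_\epsilon$ has the form $\theta'\wedge\alpha'$ with $\theta'$ vertical, hence is killed by $T$ via \eqref{chareq}; so $T(f_\epsilon\,dg^1_\epsilon\wedge\dotsb\wedge dg^k_\epsilon)=T(f_\epsilon\,d_Hg^1_\epsilon\wedge\dotsb\wedge d_Hg^k_\epsilon)$, where now $|d_Hg^i_\epsilon|\le\littlelip{CC}(g^i)$ pointwise (by Young's inequality and Pansu--Rademacher, since $d_Hg^i_\epsilon=\phi_\epsilon*(d_Hg^i)$). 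Representing $T$ by integration against a unit $k$-vector field --- legitimate since $\BBG_R$ is locally bi-Lipschitz to Euclidean space --- then gives a bound $|T(f_\epsilon\,d_Hg^1_\epsilon\wedge\dotsb\wedge d_Hg^k_\epsilon)|\le\|f\|_\infty\prod_i\littlelip{CC}(g^i)\,\|T\|(N)$ uniform in $\epsilon$ over a neighborhood $N$ of $\supp f$; this is the mechanism ensuring $\|\tilde T\|\le\|T\|$ will be locally finite once the limit is known to exist, whereas without \eqref{chareq} the surviving terms carry unbounded vertical derivatives and the would-be lift has infinite mass. Granting that $\tilde T$ is a well-defined metric current (the hard point below), $I_\#\tilde T=T$ follows because the two agree on smooth forms --- for smooth $g^i$ the defining limit is computed from the ordinary continuity axiom of $T$ --- and smooth forms are dense among $\rdist$-Lipschitz forms.

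The main obstacle, which I expect to absorb most of the work, is showing that the limit defining $\tilde T$ exists, is independent of the mollifier, and yields a functional obeying the continuity axiom on $\BBG_{CC}$: when $g^i_j\to g^i$ pointwise with uniformly bounded $\cdist$-Lipschitz constants one must pass to the limit in $T(f\,d_Hg^1_j\wedge\dotsb\wedge d_Hg^k_j)$ knowing only that the horizontal differentials converge weak-$*$ in $L^\infty$, and wedge products of weak-$*$-convergent fields do not in general converge. This is a compensated-compactness (div--curl) phenomenon, and it is exactly here that the \emph{full} strength of \eqref{chareq} --- which couples $T\restrict{\theta}$ with $T\restrict{d\theta}$ rather than merely forcing $T$ to annihilate vertical $1$-forms --- enters, together with the normality of $T$, used via iteration of the Leibniz/Stokes identity against $\partial T$ (whose own boundary vanishes) to trade Lipschitz bounds for uniform convergence. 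This is the same analytic phenomenon underlying the sufficiency direction of Theorem~\ref{characterization}, which identifies precurrents satisfying \eqref{chareq} as exactly those respecting the continuity axiom; the argument here should be arranged to parallel it, with the role of absolute continuity of the mass there played by normality. (When $k=1$ the condition \eqref{chareq} just says that the vector field representing $T$ is horizontal, and the argument already illustrates the mechanism; the case $k=0$ is trivial.)
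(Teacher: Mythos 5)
Your overall skeleton matches the paper's: the easy direction, injectivity by smoothing, and the reduction of surjectivity to lifting $T$ through its values on smooth forms are all as in the paper, and you correctly locate the role of \eqref{chareq} --- its vertical part yields the $\cdist$-mass bound by discarding the vertical components of the differentials, and its $d\theta$ part is what lets the same bound be run for $\partial T$. (The paper obtains the mass bound by comparing comasses directly, via Theorem \ref{representation} and the expansion $\driem g=\sum_i X^{u_i}(g)\,\dpansu\pi^i+\sum_j X^{v_j}(g)\theta^j$, rather than through a unit $k$-vector-field representation of $T$; that difference is cosmetic.)

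The genuine gap is exactly the step you defer as ``the main obstacle,'' and your diagnosis of that step is wrong. No compensated-compactness or div--curl mechanism is needed, and \eqref{chareq} plays no further role there. The paper's Proposition \ref{smoothextension} shows that once $T$ and $\partial T$ satisfy the locally finite mass condition \emph{with respect to $\cdist$} on smooth forms, the limit $\lim_{\epsilon\to 0}T(f_\epsilon\,dg^1_\epsilon\wedge\dotsb\wedge dg^k_\epsilon)$ exists and defines a locally normal current by the standard telescoping argument for normal currents: the difference $T(\omega_{\epsilon_1})-T(\omega_{\epsilon_2})$ is expanded one slot at a time, and each difference $g^j_{\epsilon_1}-g^j_{\epsilon_2}$ is moved out of the differential and into the coefficient slot via the Leibniz rule, producing one term against $T$ and one against $\partial T$ in which $g^j_{\epsilon_1}-g^j_{\epsilon_2}$ appears only as a uniformly small multiplicative factor; the $\cdist$-mass bounds then kill every term. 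One never has to pass a wedge product of weak-$*$ convergent horizontal differentials to the limit, so the difficulty you anticipate does not arise, and the same decomposition gives the continuity axiom for the extension. Your proposal asserts the conclusion of this step without an argument, and the mechanism you sketch for supplying one (paralleling the sufficiency direction of Theorem \ref{characterization}, with normality in place of absolute continuity) is not the right one; the correct argument is the elementary Leibniz-rule one already used in Proposition \ref{weaknormal}. Relatedly, your inductive definition of $\partial\tilde T$ as ``the lift of $\partial T$'' is unnecessary: once $\tilde T$ is defined on all Lipschitz forms, $\partial\tilde T$ is determined by duality, and its local finiteness is read off from the $\cdist$-mass bound for $\partial T$ on smooth forms.
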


\subsection*{Applications to Heisenberg Groups.}
Theorem \ref{characterization} and Corollary \ref{absisomorphism} have a number of implications when applied to Heisenberg groups. 
As observed by Rumin \cite[Section 2]{Rumin}, the space $\diffforms{k}(\heis{n})$ of smooth $k$-forms, for $k\geq n$, consists entirely of forms $\alpha\wedge\theta +\beta\wedge d\theta$, where $\theta$ is the contact form.  From this, Theorem \ref{characterization}, and a density result for smooth forms (Corollary \ref{smootheval}), we obtain a bound for the dimension of a nonzero metric current in $\heis{n}$.
\begin{corollary}
\label{nohncurrents}
Let $k>n$.  Then $\currentsloc{k}((\heis{n},\cdist))=0$.
\end{corollary}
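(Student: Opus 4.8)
The plan is to combine the Rumin-type decomposition of smooth forms with Theorem \ref{characterization} and the density of smooth forms among all metric forms. First I would fix $k > n$ and recall from \cite[Section 2]{Rumin} that every smooth $k$-form $\eta \in \diffforms{k}(\heis{n})$ can be written as $\eta = \alpha\wedge\theta + \beta\wedge d\theta$ for some $\alpha\in\diffforms{k-1}(\heis{n})$, $\beta\in\diffforms{k-2}(\heis{n})$, where $\theta$ is the (vertical) contact form; this uses $k \geq n$, so certainly applies when $k > n$. Next, since $\heis{n}$ is a Carnot group and $k \geq 2$, Theorem \ref{characterization} tells us that any $T\in\currentsloc{k}((\heis{n},\cdist))$ satisfies equation \eqref{chareq}, and hence $T(\eta) = T(\alpha\wedge\theta + \beta\wedge d\theta) = 0$ for every smooth $k$-form $\eta$. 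Thus $T$ annihilates the space $\diffforms{k}(\heis{n})$ of all smooth compactly supported $k$-forms.

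It then remains to promote ``$T$ vanishes on smooth forms'' to ``$T = 0$'' as an element of $\currentsloc{k}$, i.e., $T$ vanishes on all metric forms $f\,dg^1\wedge\dotsb\wedge dg^k$ with $f\in\lipc$ (or $\boundedc$) and $g^i$ Lipschitz. This is exactly where the density result quoted as Corollary \ref{smootheval} enters: smooth forms are dense in the relevant space of metric forms in a sense compatible with the continuity axiom for currents, so a current that kills all smooth forms kills all metric forms. Concretely, one approximates each Lipschitz tuple $(f,g^1,\dotsc,g^k)$ by tuples of smooth functions converging appropriately (uniformly on compact sets, with uniformly bounded Lipschitz constants, and with the pointwise-weak-* convergence of derivatives needed to invoke continuity of $T$), passes to the limit using the continuity axiom, and concludes $T(f\,dg^1\wedge\dotsb\wedge dg^k) = \lim T(\text{smooth forms}) = 0$.

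The main obstacle I anticipate is not the algebra — the Rumin decomposition and Theorem \ref{characterization} do the real work — but rather making sure the density/approximation step (Corollary \ref{smootheval}) is applied in exactly the right topology, so that $T(\eta_j)\to T(\eta)$ genuinely follows from the continuity axiom for metric currents. In particular, one must check that the smooth approximants can be taken compactly supported inside a neighborhood where $T$ has finite mass (since we are in the locally normal / locally finite mass setting $\currentsloc{k}$), and that the mass bound is uniform along the approximating sequence so that the dominated-convergence-style argument in the continuity axiom applies. Once that bookkeeping is in place, the conclusion $T \equiv 0$, and hence $\currentsloc{k}((\heis{n},\cdist)) = 0$, is immediate.
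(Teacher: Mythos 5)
Your proposal is correct and follows exactly the route the paper intends (and sketches in the introduction): the Rumin decomposition $\eta=\alpha\wedge\theta+\beta\wedge d\theta$ for $k>n$, the final assertion of Theorem \ref{characterization} that every $T\in\currentsloc{k}$ kills such forms, and then the density of smooth forms (Lemma \ref{smoothdense} together with the continuity axiom, packaged in the paper as Corollary \ref{smootheval}) to upgrade vanishing on smooth forms to $T=0$. The only nitpick is that the continuity axiom requires no convergence of derivatives and no finiteness of mass, only pointwise convergence with uniformly bounded Lipschitz constants and uniformly compact supports, so the bookkeeping you worry about at the end is lighter than you suggest.
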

Corollary \ref{nohncurrents} generalizes a result of Ambrosio and Kirchheim \cite[Theorem 7.2]{AmbrosioKirchheimRect} that says that nonzero $k$-\textit{rectifiable} currents do not exist in $\heis{n}$ for $k>n$. We discuss the issue of rectifiability in Section \ref{rectifiability}, where we explore the relationship between our results and those of \cite{AmbrosioKirchheimRect}, as well as a more general result due to Magnani \cite{Magnani}. 

Franchi, Serapioni, and Serra Cassano \cite{FSSC} have recently  developed an extension of the Federer-Fleming theory to Heisenberg groups.  For $k\leq n$, equation \eqref{chareq} is a defining property of their ``Heisenberg currents''.   This suggests that metric currents, rectifiable or not, might best be thought of as fundamentally low-dimensional objects.

\subsection*{Organization of the paper.}
Section \ref{preliminaries} establishes basic notation.  In Section \ref{currentdefinitions} we review the basic facts from the theory of currents as developed in \cite{AmbrosioKirchheim} and \cite{Lang}, introducing some variations in the notation necessary for our purposes.  In Section \ref{diffstructure} we recall the notion of a differentiable structure as introduced by \cite{Keith}, motivated by the differentiation theorems of \cite{Cheeger}.  The definitions and notation in these sections make precise the statements in Theorems \ref{representation}, \ref{cheegerchainrule}, and \ref{vectormeasure}, which we  proceed to prove in Section \ref{representationtheorem}.  Section \ref{metricgroups} briefly discusses the behavior of metric currents in a group setting,  establishing a density result for currents of absolutely continuous mass (Proposition \ref{approximationlemma}).  For the remainder of the article, we restrict our attention to Carnot groups.  After reviewing the important facts from the theory in Section \ref{carnot}, we perform in Section \ref{carnotprecurrents} an analysis of precurrents in such groups.  In Section \ref{invariantcurrents}, we prove Proposition \ref{currentsk}, a special case of Theorem \ref{characterization}.  The general result is then proved in Section \ref{generalcarnotcurrents}.  We prove Theorem \ref{normalisomorphism} in Section \ref{normalsurjection}, and conclude with a discussion relating our results to previously known rectifiability theorems in Section \ref{rectifiability}.



\subsection*{Acknowledgments}

I wish to thank my advisor, Mario Bonk, as well as Pekka Pankka and Stefan Wenger, for reviewing several drafts of this work, and providing numerous comments and suggestions.  I am also grateful for additional comments from Luigi Ambrosio, Urs Lang, and Valentino Magnani.  Most especially, I am deeply indebted to my late advisor, Juha Heinonen, who introduced me to this field, offering much insight, patient guidance, and encouragement.


\section{Notation}
\label{preliminaries}
Throughout this paper, $X = (X,\dist)$ will denote a separable, locally compact metric space.
We will frequently make use of the notation $|x_1-x_2|=\dist(x_1,x_2)$ when $x_1, x_2\in X$, and the metric is unambiguous. The closed ball of radius $r$ centered at a point $x_0\in X$ is denoted by $B_r(x)$

A Euclidean space is a finite dimensional vector space whose metric is given by an inner product.  Typically, we will use the notation $\BBE$ to refer to a Euclidean space of unspecified dimension, as well as $\mathfrak e$ to denote the Lie algebra associated to the Lie group $(\BBE,+)$. 

The term ``function'' will always denote a real valued map, and we will denote the support of a function $f$ by $\supp(f)$.  This is defined to be the smallest closed set outside of which $f$ vanishes. 

The Lipschitz constant of a map $F\colon X\rightarrow Y$ is  denoted by $L(F)$
We write $\lipc(X)$, $\liploc(X)$, and $\lip_1(X)$ to denote, respectively, the spaces of Lipschitz functions with compact support, locally Lipschitz functions, and functions with Lipschitz constant at most $1$.  We equip these spaces  with notions of convergence, though we do not define topologies on them, since we are interested only in convergence of sequences, and knowledge of such convergence is not generally sufficient to describe a vector space topology.
Instead, we simply say that a sequence of functions $f_i\in\lipc(X)$ \textit{converges} to $f\in\lipc(X)$ if the sequence converges to $f$ pointwise, and all of the functions $f_i$ and $f$ have uniformly bounded Lipschitz constant, as well as uniformly compact support.  Similarly, a sequence $f_i \in \liploc(X)$ converges to $f\in \liploc(X)$ if it converges to $f$ pointwise, and for any compact subset $K\subset X$, all of the functions $f_i$ and $f$ have uniformly bounded Lipschitz constants when restricted to $K$.  
Though we do not discuss them here, \cite{Lang} describes locally convex vector space topologies which yield the same notion of convergent sequences.  

We say that a subset $S\subset \lipc(X)$ is \textit{dense} if every function in $\lip_c(X)$ is a limit of a sequence of functions in $S$.  Note that since $X$ is locally compact and separable, each subset $S_K^M$, where
\begin{equation*}
S_K^M = \{f\in\lipc(X): L(f)<M\text{, }|f(x)|<M\text{ for all }x\in X\text{, and }\supp(f)\subseteq K \text{.}\}\text{,}
\end{equation*}
is compact (and hence separable) in the topology of uniform convergence, by the Arzela-Ascoli Theorem.  Here $M>0$ and $K\subset X$ is compact.  It follows that $\lipc(X)$ is \textit{separable} in the sense that it has a countable dense subset. 

We denote by $\bounded(X)$, $\boundedc(X)$ and $\boundedloc(X)$ the spaces of Borel functions that are, respectively, bounded, bounded with compact support, and locally bounded. 

If $\mu$ is a Borel measure on a space $X$, and $F\colon X\rightarrow Y$ is Borel measurable, then $F_\#\mu$ denotes the pushforward of $\mu$ by $F$; that is, $F_\#\mu$ is the Borel measure on $Y$ given by $F_\#\mu(A) = \mu(F^{-1}(A))$ for every Borel set $A \subseteq Y$.

By $\mathcal H^k(A)$ we denote to the $k$-dimensional Hausdorff measure of a subset $A\subseteq X$.  If $V$ is a vector space, $\bigwedge^k V$ is the $k^{\text{th}}$ exterior power of $V$.  Finally, we denote by $\Lambda_{k,n}$ the set of $k$-indices of the form $(i_1,\dotsc,i_k)$ satisfying $1 \leq i_i <\dotsb<i_k \leq n$.   

\section{Metric $k$-currents}

\label{currentdefinitions}

Let $X$ be a locally compact metric space.  We recall a few definitions from \cite{Lang}.  We will follow \cite{Lang} throughout this section, except as noted otherwise.  One small addition will be our linearization of the spaces of ``forms'' via tensor products and exterior powers, as described below.

First we define the space $\formsc{k}(X)$ of compactly supported simple metric $k$-forms by
\begin{equation*}
\formsc{k}(X) = \lipc(X) \times \liploc(X)^k\text{.}
\end{equation*}
The motivation for calling elements of this space ``simple forms'' will be explained below.
We say that a sequence of $k$-forms $\omega_i = (f_i,g_i^1,\dotsc,g_i^k)$ converges to $\omega = (f,g^1,\dotsc,g^k)$ if $f_i$ converges to $f$ and $g_i^j$ converges to $g^j$ for $j=1,\dotsc,k$.  Here and throughout, unless otherwise stated, the convergence of a sequence of Lipschitz functions is defined as in Section \ref{preliminaries}.

We also define a number of other spaces, in which we will not concern ourselves with notions of convergence: 
\begin{equation*}
\extformsc{k}(X)= \lipc(X) \otimes \bigwedge\nolimits^k\liploc(X)\text{.}
\end{equation*}
\begin{equation*}
\forms{k}(X)  = \liploc(X)^{k+1} \text{.}
\end{equation*}
\begin{equation*}
\extforms{k}(X)  = \liploc(X)\otimes\bigwedge\nolimits^k\liploc(X)\text{.}
\end{equation*}
\begin{equation*}
\measforms{k}(X) = \boundedloc(X) \times \liploc(X)^{k}\text{.}
\end{equation*}
\begin{equation*}
\measextforms{k}(X) = \boundedloc(X) \otimes \bigwedge\nolimits^k\liploc(X)\text{.}
\end{equation*}
\begin{equation*}
\measformsc{k}(X) = \boundedc(X) \times \liploc(X)^{k}\text{.}
\end{equation*}
\begin{equation*}
\measextformsc{k}(X) = \boundedc(X) \otimes \bigwedge\nolimits^k\liploc(X)\text{.}
\end{equation*}

\begin{remark}
The number of different spaces of ``forms'' may at first appear daunting, but we do not require deep study for most of them.  As stated before, we do not topologize any of these additional spaces - any time we speak of convergence of a sequence of forms, we \textit{always} refer to a sequence of simple forms in $\formsc{k}(X)$.

Our use of tensor and exterior products here is a deviation from both \cite{AmbrosioKirchheim} and \cite{Lang}.  The motivation for this is two-fold.  Philosophically, in order to complete the analogy between ``metric forms'' and classical differential forms, we would like for metric forms to constitute a linear space.  More practically, in our formulation and proof of Theorem \ref{characterization}, we need to deal with metric forms that are not simple.  However, it should be noted that this deviation from the theory is entirely cosmetic, due to our lack of topological considerations on any of the additional spaces.  We use them only to more naturally phrase statements that would otherwise require repeated discussion of linear combinations of forms.
\end{remark}

\begin{definition}
\label{currentdef}
A \textbf{metric $k$-current} on $X$ is a map $T\colon\formsc{k}(X) \rightarrow \BBR$ satisfying the following axioms:
\begin{enumerate}
\item Linearity:  $T$ is linear in each argument.
\item Continuity:  $T(\omega_i)$ converges to $T(\omega)$ whenever $\omega_i$ converges to $\omega$.
\item Locality:  $T((f,g^1,\dotsc,g^k))=0$ provided that for some $i$, $g^i$ is constant on $\supp(f)$.
\end{enumerate}
The space of metric $k$-currents on $X$ is denoted $\lcurrents{k}(X)$.
\end{definition}
We will frequently drop the adjective ``metric'' in the future. 
\begin{remark}
\label{localityremark}
We should point out that a priori the locality axiom as defined in \cite[Definition 2.1]{Lang} is only required to hold when $g^i$ is constant on a \textit{neighborhood} of $\supp(f)$, but it is later proven there that this is equivalent to the above definition.  Also, as a consequence of the locality axiom, we may modify any of the functions $g^i$ away from $\supp(f)$, or in turn modify $f$ away from $\supp (g^i)$, without changing the value of $T((f,g^1,\dotsc,g^k))$  (to see that the second statement is true, note that $f$ vanishes on a neighborhood of $\supp(g^i)$ if and only if $g^i$ vanishes on a neighborhood of $\supp(f)$).  In particular, if $(f,g^1,\dotsc,g^k)\in\forms{k}(X)$, and one of the functions $g^i$ is $\lipc(X)$, we may unambiguously define
\begin{equation*}
T((f, g^1,\dotsc, g^k))= T((\sigma f,g^1,\dotsc,g^k))\text{,}
\end{equation*}
where $\sigma\in\lipc(X)$ is any function satisfying $\sigma\equiv 1$ on some neighborhood of $\supp(g^i)$. 
\end{remark}

The following theorem provides some intuition for the use of the term ``form'' above.
\begin{theorem}
{\cite[Proposition 2.4]{Lang}}       
\label{alternateleibniz}
If $T\colon \formsc{k}(X)\rightarrow \BBR$ is a $k$-current, then $T$ satisfies the alternating property and the Leibniz rule:
\begin{equation}
\label{alternatingeq1}
T((f, g^1,\dotsc,g^i,\dotsc, g^j,\dotsc,g^k)) = - T((f, g^1,\dotsc,g^j,\dotsc, g^i,\dotsc,g^k))\text{.}
\end{equation}
\begin{equation}
\label{leibnizeq1}
T((f, g^1,\dotsc,g^k)) + T((g^1,f,\dotsc,g^k)) = T((1,f g^1,g^2,\dotsc,g^k))\text{.}
\end{equation}
\end{theorem}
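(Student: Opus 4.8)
The plan is to establish the alternating identity \eqref{alternatingeq1} first and then obtain the Leibniz rule \eqref{leibnizeq1} by similar techniques. Since $T$ is linear in each of its last $k$ slots, substituting $g^i+g^j$ simultaneously into the $i$-th and $j$-th slots shows that \eqref{alternatingeq1} is equivalent to the assertion that $T((f,g^1,\dots,g^k))=0$ whenever two of the functions $g^i$ coincide. Fix such a form, write $g:=g^i=g^j$, and use Remark \ref{localityremark} to modify $f$ and $g$ away from where they are felt by $T$: after truncating $g$ to a bounded range and subtracting a constant (neither of which changes the value of $T$), we may assume $f\in\lipc(X)$, $g\in\lipc(X)$ and $0\le g\le 1$. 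Now introduce the staircase functions $b^n_\ell:=\max(0,\min(g-\ell/n,\,1/n))$ for $n\ge 1$ and $0\le\ell\le n-1$: each lies in $\lipc(X)$ with $L(b^n_\ell)\le L(g)$, is identically $0$ on $\{g\le\ell/n\}$ and identically $1/n$ on $\{g\ge(\ell+1)/n\}$, and $g=\sum_{\ell=0}^{n-1}b^n_\ell$. Expanding $T$ by multilinearity in the two slots carrying $g$ writes $T((f,\dots,g,\dots,g,\dots))$ as $\sum_{\ell,m}T((f,\dots,b^n_\ell,\dots,b^n_m,\dots))$.

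The first, elementary step is that every term with $|\ell-m|\ge 2$ vanishes. If, say, $\ell\ge m+2$, the two closed sets $\{g\le\ell/n\}$ and $\{g\ge(m+1)/n\}$ cover $X$, and on them $b^n_\ell$ and $b^n_m$ respectively are constant; choosing $\rho\colon\BBR\to[0,1]$ equal to $1$ on $(-\infty,(m+1)/n]$, to $0$ on $[\ell/n,\infty)$, and affine in between, and splitting $f=f\cdot(\rho\circ g)+f\cdot(1-\rho\circ g)$, the support of the first piece lies in $\{g\le\ell/n\}$ and that of the second in $\{g\ge(m+1)/n\}$, so each term is killed by the locality axiom of Definition \ref{currentdef}. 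Hence $T((f,\dots,g,\dots,g,\dots))=\sum_{|\ell-m|\le 1}T((f,\dots,b^n_\ell,\dots,b^n_m,\dots))$, a sum whose summands are supported, in the relevant slots, on thin bands of $g$-values.

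The real obstacle is the residual near-diagonal and diagonal sum. The partition-of-unity trick above fails for these precisely because the two relevant functions stop being ``complementarily constant'' on a single level set $\{g=t\}$; on such a set $g$ is constant, so the form is degenerate there, but cutting $f$ off near a level set wrecks the uniform Lipschitz bound that the continuity axiom requires, so one cannot simply pass to the limit. I would resolve this by iterating the staircase decomposition on ever finer, generically shifted grids, so that in the limit each point of $\supp f$ sits in a band on which $g$ oscillates by an amount tending to $0$, and then combine the continuity axiom with the observation that a generic grid shift avoids the (in the finite-mass case, at most countably many) levels that survive in the limit; for general currents one first localizes to a fixed compact set, where the continuity and locality axioms already force a form of local finiteness. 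This limiting argument, reconciling the grid-alignment errors with the continuity axiom, is the technical heart of the proof.

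Finally, the Leibniz rule \eqref{leibnizeq1} is established by entirely analogous means. By Remark \ref{localityremark} one reduces to $g^1\in\lipc(X)$, observes that the difference $B(f,g^1):=T((1,fg^1,g^2,\dots,g^k))-T((f,g^1,g^2,\dots,g^k))-T((g^1,f,g^2,\dots,g^k))$ is bilinear and symmetric in $(f,g^1)$, is continuous, and — by linearity and locality, exactly as in the computations above — vanishes whenever $g^1$ is constant on a neighborhood of $\supp f$, so that $B$ is itself a symmetric metric $1$-current; running the same staircase-and-partition-of-unity argument on $B$, with \eqref{alternatingeq1} now available and with the level-set terms controlled as in the previous step, forces $B\equiv 0$, which is \eqref{leibnizeq1}.
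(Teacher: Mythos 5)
The paper gives no proof of this statement; it is imported verbatim from Lang's Proposition~2.4, so your argument has to be measured against Lang's. Your opening moves are correct and standard: bilinearize so that \eqref{alternatingeq1} becomes $T((f,\dots,g,\dots,g,\dots))=0$, normalize $0\le g\le 1$ via locality, decompose $g=\sum_{\ell}b^n_\ell$, and kill the terms with $|\ell-m|\ge 2$ by a Lipschitz partition of $f$ and the locality axiom. The gap is exactly where you place it, and the repair you propose does not close it. First, your fallback claim that ``the continuity and locality axioms already force a form of local finiteness'' is not tenable: the continuity axiom only constrains sequences of forms whose coefficients $f$ are \emph{equi}-Lipschitz, whereas mass is a supremum over all $f$ with $|f|\le 1$ and no Lipschitz bound, so no compactness argument produces a mass measure, and indeed Lang's framework (which Definition~\ref{currentdef} follows, with no mass axiom) admits currents of locally infinite mass. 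Since Theorem~\ref{alternateleibniz} is stated for such general currents, any argument that selects grid levels $t$ with $||T||(\{g=t\})=0$ is unavailable. Second, even granting locally finite mass, the ``generic grid'' device only addresses the \emph{adjacent} cross terms, which concentrate near shared level sets; it says nothing about the $n$ diagonal terms $T((f,\dots,b^n_\ell,\dots,b^n_\ell,\dots))$, which live on full bands and whose sum is merely bounded, not small, under the crude mass estimate.

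Both difficulties have clean resolutions that use no mass at all, which is why the theorem holds in this generality. For the cross terms: given $\ell\ne m$ with, say, $m>\ell$, replace $b^n_m$ by the shifted band $b^{n,\delta}_m=\max(0,\min(g-m/n-\delta,1/n))$. The closed sets on which $b^n_\ell$ and $b^{n,\delta}_m$ are respectively constant now overlap on $\{m/n\le g\le m/n+\delta\}$, a set of positive ``thickness'', so a Lipschitz partition $f=f\rho(g)+f(1-\rho(g))$ plus locality gives $T((f,\dots,b^n_\ell,\dots,b^{n,\delta}_m,\dots))=0$ for every $\delta>0$; letting $\delta\to 0$, the shifts converge uniformly with Lipschitz constant at most $L(g)$, and the continuity axiom kills $T((f,\dots,b^n_\ell,\dots,b^n_m,\dots))$ for \emph{every} $\ell\ne m$, adjacent or not. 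For the diagonal: once all off-diagonal terms vanish, $T((f,\dots,g,\dots,g,\dots))=\sum_\ell T((f,\dots,b^n_\ell,\dots,b^n_\ell,\dots))=T((f,\dots,\sigma_n\circ g,\dots,\sigma_n\circ g,\dots))$, where $\sigma_n(t)=\sum_\ell(-1)^\ell\max(0,\min(t-\ell/n,1/n))$ is the sawtooth with $0\le\sigma_n\le 1/n$ and slopes $\pm1$ (the signs cancel off the diagonal and square to $1$ on it). Since $\sigma_n\circ g\to 0$ uniformly with $L(\sigma_n\circ g)\le L(g)$, continuity and locality finish the proof with no summation of error terms whatsoever. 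Your treatment of \eqref{leibnizeq1} inherits the same gap because it defers to the same machinery; the standard completion is to note that the defect $R(f,h)=T((1,fh,g^2,\dots))-T((f,h,g^2,\dots))-T((h,f,g^2,\dots))$ is symmetric and bilinear, so by polarization it suffices to prove $T((1,f^2,g^2,\dots))=2T((f,f,g^2,\dots))$, which is the chain rule for $t\mapsto t^2$ and is established by the same staircase-plus-shift mechanism rather than by another level-set argument.
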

Notice that the right hand side of equation \eqref{leibnizeq1} is well-defined by Remark \ref{localityremark}.

Although we are using the definition of currents from \cite{Lang}, in light of Theorem \ref{alternateleibniz} we will borrow the suggestive notation
\begin{equation}
\label{convention1}
f\,dg^1\wedge \dotsb \wedge dg^k = (f,g^1,\dotsc,g^k)
\end{equation}
from \cite{AmbrosioKirchheim}. 

Moreover, if one of the functions $g^i$ is compactly supported, we define
\begin{equation}
dg^1\wedge \dotsb \wedge dg^k = (1,g^1,\dotsc,g^k)\text{.}
\end{equation}
This latter notation is justified by Remark \ref{localityremark}, and the locality property.

With this new notation, equations \eqref{alternatingeq1} and \eqref{leibnizeq1} can be rewritten:
\begin{equation}
\label{alternatingeq}
T(f\, dg^1\wedge \dotsb \wedge dg^i\wedge\dotsb\wedge dg^j\wedge\dotsb \wedge dg^k) = - T(f\, dg^1\wedge \dotsb \wedge dg^j\wedge\dotsb\wedge dg^i\wedge\dotsb \wedge dg^k)
\end{equation}
\begin{equation}
\label{leibnizeq}
T(f\,dg^1\wedge \dotsb \wedge dg^k) + T(g^1\,df\wedge dg^2\wedge\dotsb\wedge dg^k) = T(d(f g^1)\wedge dg^2\wedge\dotsb\wedge dg^k)\text{.}
\end{equation}

 
Since $T$ is linear in each variable, and satisfies the alternating property \eqref{alternatingeq}, there is a unique linear map, which we also denote by $T\colon\extformsc{k}(X)\rightarrow \BBR$, satisfying $T(f\otimes g^1\wedge\dotsb\wedge g^k)=T(f\,dg^1\wedge\dotsb\wedge dg^k)$.  We will therefore use the notation
\begin{equation}
\label{convention2}
f\,dg^1\wedge \dotsb \wedge dg^k = f\otimes g^1\wedge \dotsb\wedge g^k\text{.}
\end{equation}
Since $T(f\otimes g^1\wedge\dotsb\wedge g^k)=T((f,g^1,\dotsc,g^k))$, there is no potential for ambiguity between the notations introduced with equations \eqref{convention1} and \eqref{convention2};  the only situations in which we consider metric forms involve pairing the forms with currents, with the one exception being that we at times discuss convergence of forms in their own right.  In this latter context, we only deal with convergence of simple forms in $\formsc{k}(X)$, and so in such a setting, we assume the forms are in that space, rather than $\extformsc{k}(X)$.

With our introduction of the space $\extformsc{k}(X)$, we are able to rephrase the definition of mass from \cite{Lang} (\cite[Definition 4.1]{Lang}, but see also \cite[equation (3.7)]{AmbrosioKirchheim}).  We first make a definition that is somewhat reminiscent of the usual notion of comass for differential forms.
\begin{definition}
\label{comass}
Let $\omega \in \extformsc{k}(X)$.  The \textbf{comass} of $\omega$, written $||\omega||$, is the number
\begin{equation*}
||\omega||=\inf_{\text{$S$ finite}} \sum_{s\in S} |f_s|
\end{equation*}
where the functions $f_s$ satisfy
\begin{equation*}
\omega = \sum_{s\in S} f_s\, dg^1_s \wedge \dotsb \wedge dg^k_s
\end{equation*}
for some functions $g^i_s\in\liploc(X)$ such that $L(g^i_s|_{\supp(f)})\leq 1$.
\end{definition}

We now give our reformulation of \cite[Definition 4.1]{Lang}.
\begin{definition}
\label{massdef}
Let $T\colon \formsc{k}(X)\rightarrow \BBR$ be any function that is linear in each argument.  The \textbf{mass} of $T$ is the Borel regular outer measure $||T||$ on $X$ given on open sets $U$ by   
\begin{equation*}
||T||(U) = \sup_{\omega\in\extformsc{k}(U), ||\omega||\leq 1} T(\omega)\text{,}
\end{equation*}
and on arbitrary sets $A$ by
\begin{equation*}
||T||(A) = \inf_{U \supset A\text{, $U$ open}} ||T||(U)\text{.}
\end{equation*}
\end{definition}
It follows from \cite[Theorem 4.3]{Lang} (and the succeeding remarks) that $||T||$ is indeed a Borel regular outer measure.  Notice that we do not require any continuity restrictions on $T$.

We denote by $\currents{k}(X)$ (resp.\ $\currentsloc{k}(X)$) the space of metric $k$-currents of (resp.\ locally) finite mass, that is, 
\begin{equation*}
\currents{k}(X)=\{T\in\lcurrents{k}(X):||T||(X)<\infty\}\text{,}
\end{equation*}
and
\begin{equation*}
\currentsloc{k}(X)=\{T\in\lcurrents{k}(X):\text{$||T||(A)<\infty$ whenever $\overline{A}\subset X$ is compact.}\}\text{.}
\end{equation*}
It can be shown \cite[Proposition 4.2]{Lang} that $\currents{k}(X)$ is a Banach space under the mass norm $||T||(X)$. 


We recall \cite[Theorem 4.4]{Lang} that for every $k$-current $T$ of locally finite mass, there is a canonical extension of $T$ to $\measformsc{k}(X)$, and hence to $\measextformsc{k}(X)$, such that if $f_i\in\lipc(X)$, $\beta\in \boundedc(X)$, and $\{f_i\}$ converges to $\beta$ in $L^1(X, ||T||)$, then for every ordered $k$-tuple $(g^1,\dotsc,g^k)\in \liploc(X)^k$,
\begin{equation}
\label{finitemassextension}
T(\beta\,dg^1\wedge \dotsb\wedge dg^k)= \lim_{i\rightarrow\infty} T(f_i\,dg^1\wedge \dotsb\wedge dg^k)\text{.}
\end{equation}


The mass measure $||T||$ can be characterized alternatively \cite[Theorem 4.3]{Lang} as the minimal Borel regular measure satisfying
\begin{equation}
\label{masscriterion1}
T(f\,dg^1\wedge\dotsb\wedge dg^k)\leq\prod_{i=1}^k L(g^i|_{\supp(f)}) \int_X |f|\,d||T||
\end{equation}
for every $f\,dg^1\wedge\dotsb\wedge dg^k\in \formsc{k}(X)$.

From the definition, the extension of $T$ to $\measformsc{k}(X)$ also satisfies equation \eqref{masscriterion1}.
\begin{equation}
\label{masscriterion1meas}
T(\beta\,dg^1\wedge\dotsb\wedge dg^k)\leq\prod_{i=1}^k L(g^i) \int_X |\beta|\,d||T||\text{.}
\end{equation}

In the case $k=0$, currents of locally finite mass act on functions by integration against a signed Radon measure, absolutely continuous with respect to $||T||$.  The following lemma, and proof, were communicated to the author by Urs Lang.
\begin{lemma}
\label{0currents}
Let $T\in\currentsloc{0}(X)$.  Then there is a function $\lambda\in L^\infty(X,||T||)$ such that for every $\beta\in\measformsc{0}(X)=\boundedc(X)$,
\begin{equation*}
T(\beta)=\int_X \beta \lambda\,d||T||\text{.}
\end{equation*}
\end{lemma}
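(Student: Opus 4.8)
The plan is to realize $T$ as a signed Radon measure via the Riesz representation theorem, and then use the mass estimate \eqref{masscriterion1meas} for $k=0$ to show this measure is absolutely continuous with respect to $\|T\|$ with bounded density. First I would restrict $T$ to $\lipc(X)$ and observe that, since $X$ is locally compact and separable, the continuity axiom together with the mass bound \eqref{masscriterion1} (in the case $k=0$, which reads $T(f)\le \int_X|f|\,d\|T\|$) shows that $T$ is a bounded linear functional on $\lipc(K)$ for each compact $K$, uniformly controlled by $\|T\|(K)<\infty$. Since $\lipc(X)$ is dense in $\contc(X)$ in the uniform-on-compacta sense and the bound $|T(f)|\le \|f\|_{L^1(\|T\|)}\le \|T\|(\supp f)\,\|f\|_\infty$ is continuous for that topology, $T$ extends uniquely to a continuous linear functional on $\contc(X)$. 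By the Riesz representation theorem there is a signed Radon measure $\nu$ with $T(f)=\int_X f\,d\nu$ for all $f\in\contc(X)$, and in particular for all $f\in\lipc(X)$.

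Next I would show $|\nu|\le \|T\|$ as measures, so that $\nu\ll\|T\|$ with a Radon–Nikodym density $\lambda$ satisfying $|\lambda|\le 1$ a.e.; in particular $\lambda\in L^\infty(X,\|T\|)$. This follows from \eqref{masscriterion1}: for open $U$ and $f\in\contc(U)$ with $|f|\le 1$ we have $T(f)\le\int_U|f|\,d\|T\|\le\|T\|(U)$, and taking the supremum over such $f$ (which, by the variational characterization of total variation for Radon measures, computes $|\nu|(U)$) gives $|\nu|(U)\le\|T\|(U)$; the general case follows by outer regularity, exactly the regularity built into Definition \ref{massdef}. Hence $T(f)=\int_X f\lambda\,d\|T\|$ for every $f\in\lipc(X)$.

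Finally I would upgrade from $f\in\lipc(X)$ to $\beta\in\boundedc(X)$ using the canonical extension of $T$ to $\measformsc{0}(X)$ recorded in \eqref{finitemassextension}. Given $\beta\in\boundedc(X)$, choose $f_i\in\lipc(X)$ converging to $\beta$ in $L^1(X,\|T\|)$ with supports in a fixed compact neighborhood of $\supp\beta$; then $T(\beta)=\lim_i T(f_i)=\lim_i\int_X f_i\lambda\,d\|T\|=\int_X\beta\lambda\,d\|T\|$, the last equality because $\lambda$ is bounded and $f_i\to\beta$ in $L^1(\|T\|)$. This proves the lemma with the stated $\lambda\in L^\infty(X,\|T\|)$ (indeed $\|\lambda\|_\infty\le 1$).

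I expect the only genuinely delicate point to be the density/extension step connecting $\lipc(X)$ to $\contc(X)$: one must check that the estimate $|T(f)|\le\|T\|(\supp f)\,\|f\|_\infty$ really does let a functional defined only on Lipschitz functions extend continuously to all of $\contc(X)$, which uses both that $\lipc$ is uniformly dense in $\contc$ on a locally compact metric space and that $\|T\|$ is locally finite so the operator norm on each $\contc(K)$ is finite. Everything after the Riesz representation is routine.
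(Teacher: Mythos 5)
Your proof is correct, but it takes a slightly longer route than the paper. The paper's argument observes that inequality \eqref{masscriterion1} (with $k=0$) makes $T$ a bounded linear functional on $\lipc(X)$ with respect to the $L^1(X,\|T\|)$ \emph{norm}, that $\lipc(X)$ is dense in $L^1(X,\|T\|)$, and hence that $T$ extends to an element of $L^1(X,\|T\|)^*$; the duality $L^1(\|T\|)^*=L^\infty(\|T\|)$ then produces $\lambda$ directly, with no intermediate measure. You instead extend $T$ to $\contc(X)$ in the sup norm, invoke the Riesz--Markov representation by a signed Radon measure $\nu$, prove $|\nu|\le\|T\|$ via the variational characterization of total variation on open sets plus outer regularity, and then apply Radon--Nikodym (which needs the $\sigma$-finiteness of $\|T\|$, available here since $X$ is locally compact and separable and $\|T\|$ is locally finite). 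All of these steps go through, and you reach the same conclusion with the same bound $\|\lambda\|_\infty\le1$. The trade-off is that the step you flag as delicate --- the uniform density of $\lipc$ in $\contc$ and the passage to $\contc(X)^*$ --- is simply not needed in the paper's route, because working in the $L^1(\|T\|)$ norm from the start makes the density of $\lipc(X)$ immediate and eliminates the detour through signed measures and Radon--Nikodym; on the other hand, your version has the mild advantage of exhibiting $T$ concretely as integration against an explicit signed Radon measure, which is the picture one usually has in mind for $0$-currents of locally finite mass.
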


\begin{proof}
The mapping $T$ is continuous in the norm of $L^1(X,||T||)$, by inequality \eqref{masscriterion1}.  Moreover, the compactly supported Lipschitz functions are dense in this norm, and so $T$ extends to a map $\hat{T}\in L^1(X,||T||)^{*}$  (This is, in fact, precisely the argument used in \cite{Lang} to define the extension \eqref{finitemassextension} above).  Thus the existence and uniqueness of $\lambda$ follows from the Riesz representation theorem.
\end{proof}

As with the classical definition, the boundary of a current is defined through duality:
\begin{definition}
\label{boundarydef}
Let $T\colon\formsc{k}(X)\rightarrow \BBR$ be a $k$-current.  The \textbf{boundary} of $T$ is the map $\partial T\colon\formsc{k-1}(X)\rightarrow \BBR$ given by
\begin{equation*}
\partial T(f\, dg^1 \wedge \dotsb \wedge dg^{k-1}) = T(df \wedge dg^1 \wedge \dotsb \wedge dg^{k-1})\text{.}
\end{equation*}
\end{definition}
As noted after \cite[Definition 3.4]{Lang}, the map $T\mapsto \partial T$ is well-defined, linear in each argument, and satisfies $\partial(\partial T) = 0$. 



Typically we do not expect the boundary of a current to have finite mass.  If a current and its boundary \textit{do} each have finite (resp. locally finite) mass, the current is said to be a normal (resp. locally normal) current.  The space of such currents will be denoted $\normal{k}(X)$ (resp. $\normalloc{k}(X)$). 

Though one of the highlights of \cite{Lang} is the elimination of the assumption of finite mass, or even locally finite mass, as a necessary axiom for the theory of currents, all of the currents we consider from now on will have locally finite mass.  Indeed, our motivation for following \cite{Lang} rather than \cite{AmbrosioKirchheim} is primarily that the former allows for locally finite mass, rather than just finite mass.  For this reason, we introduce the following convention:

\emph{Throughout the rest of this paper, except where otherwise noted, the word ``current'' will denote a metric current of locally finite mass.}

Given a $k$-current and a $j$-form, with $0\leq j\leq k$, there is a natural way to produce a $(k-j)$-current.
\begin{definition}
\label{restriction}
Let $T \in \currentsloc{k}(X)$, and $\omega = \beta \, dh^1 \wedge \dotsb \wedge dh^j \in \measforms{j}(X)$. The restriction of the current $T$ by the form $\omega$ is the $(k-j)$-current $T\restrict{\omega} \in \currentsloc{k-j}(X)$, given by
\begin{equation}
\label{restrictioneq}
T\restrict{\omega}(f\,dg^1 \wedge \dotsb \wedge dg^{k-j})
= T(\beta f\,dh^1 \wedge \dotsb \wedge dh^j \wedge dg^1 \wedge \dotsb \wedge dg^{k-j})\text{.}
\end{equation}
If $A \subseteq X$ is a Borel set, we define
\begin{equation*}
T\restrict{A}=T\restrict{\charfcn_A}\text{.}
\end{equation*}
\end{definition}

Note that for a fixed current $T\in\currents{k}$, the restriction map $T\restrict{}\colon \measforms{j}(X) \rightarrow \currentsloc{k-j}(X)$ is linear in each argument, and thus induces a linear map $T\restrict{}\colon \measextforms{j}(X) \rightarrow \currentsloc{k-j}(X)$.

It can be shown \cite[Lemma 4.7]{Lang} that $||T\restrict{A}||=||T||\restrict{A}$.  Using restrictions, we also have notions of concentration and support for currents.
\begin{definition}
\label{support}
We say that $T$ is \textbf{concentrated} on a Borel set $A\subseteq X$ if $T\restrict{A}=T$, or equivalently, if $||T||$ is concentrated on $A$. The \textbf{support} of a current $T$, denoted $\supp(T)$, is the smallest closed set on which $T$ is concentrated.
\end{definition}

Definition \ref{support} lets us update equation \eqref{masscriterion1meas}:
\begin{equation}
\label{masscriterion}
T(\beta\,dg^1\wedge\dotsb\wedge dg^k)\leq\prod_{i=1}^k L(g^i|_{\supp(T)}) \int_X |\beta|\,d||T||\text{.}
\end{equation}



We recall the notion of the push-forward of a current.

\begin{definition}
\label{pushforwarddef}
Let $F\colon X\rightarrow Y$ be a proper Lipschitz map between metric spaces $X$ and $Y$.  The \textbf{push-forward} of a current $T \in \currentsloc{k}(X)$ along $F$ is the current $F_\#T\in\currentsloc{k}(Y)$ given by
\begin{equation*}
F_\#T(f \,dg^1\wedge \dotsb \wedge dg^k)= T((f\circ F)\,d(g^1\circ F)\wedge \dotsb \wedge d(g^k\circ F))\text{.}
\end{equation*}
\end{definition}

\begin{remark}
\label{pushforwardcompact}
If $T$ is compactly supported, we may drop the assumption that $F$ is proper.  Indeed, in this case we define
\begin{equation*}
F_\#T(f \,dg^1\wedge \dotsb \wedge dg^k)= T(\sigma\cdot(f\circ F)\,d(g^1\circ F)\wedge \dotsb \wedge d(g^k\circ F))\text{,}
\end{equation*}
where $\sigma$ is any compactly supported Lipschitz function such that $\sigma|_{\supp(T)}\equiv 1$.  It follows immediately from the definition of $\supp(T)$ that this is well-defined, and coincides, in the case of a proper map, with the Definition \ref{pushforwarddef}.
\end{remark}



\section{Strong measured differentiable structures}
\label{diffstructure}

We recall the notion of a differentiable structure from  \cite{Keith}, inspired by \cite{Cheeger}.
\begin{definition}
\label{structure}
Let $X = (X,d,\mu)$ be a metric measure space.
Let $Y \subseteq X$, and let $\pi = (\pi^1,\dotsc,\pi^n)\colon Y \rightarrow \BBE$ be a Lipschitz map, where $\BBE\cong \BBR^n$ is a Euclidean space, and $\mathfrak{e}$ it's tangent space.  We call $(Y,\pi)$ a \textbf{coordinate patch} if the following holds:
For any $f \in \lip(X)$ there is a set $Y_f\subset Y$, with $\mu(Y\backslash Y_f)=0$, for which there is a unique measurable function $\diff{\pi} f \colon Y_f \rightarrow \mathfrak{e}^*$ (written $y\mapsto \diff{\pi} f_y$), such that for every $y \in Y_f$, 
\begin{equation}
\label{infinitesimal}
f(x) = f(y) + \langle \diff{\pi} f_y,\pi(x)-\pi(y)\rangle + E^f_y(x)\text{,}
\end{equation}
where 
\begin{equation}
\label{smalloerror}
\lim_{x\rightarrow y}\frac{E^f_y(x)}{\dist(x,y)} = 0\text{.}
\end{equation} 
$X$ admits a \textbf{strong measured differentiable structure} if $X$ is a countable union of coordinate patches. 
\end{definition}
\begin{remark}
\label{liealgebranote}
It is important to note that the Euclidean inner product $\langle,\rangle$ on $\BBE$, induces, via the exponential map, a natural pairing $\langle,\rangle$ between $\mathfrak{e}^*$ and $\BBE$, through which equation \eqref{infinitesimal} should be interpreted.  The reason for such care in distinguishing between Lie groups and Lie algebras will be more apparent when we discuss Carnot groups.
\end{remark}

I am greatly indebted to Stefan Wenger for suggesting the following useful fact, which has strengthened the result of Theorem \ref{representation} while at the same time simplifying its proof (also, compare \cite[Section 3]{AmbrosioKirchheimRect}).
\begin{lemma}
\label{uniformcontrol}
Let $X$ be locally compact and separable, $Y\subset X$, and let $\pi\colon Y\rightarrow \BBE$ be a coordinate patch as in Definition \ref{structure}.  Let $f \in \lipc(X)$, $\epsilon>0$, and let $\nu$ be a Radon measure concentrated on  $Y_f$.  Then there is a compact set $Z=Z(f,\nu,\epsilon)\subseteq Y_f$, with $\nu(Y\backslash Z)<\epsilon$, such that as $r$ approaches $0$, the Lipschitz constant $L(E^f_{z}|_{Z\cap B_r(x)})$ of the restricted error function $E^f_{z}|_{Z\cap B_r(x)}$ converges to $0$ uniformly in $z$, for $z\in Z$.  That is, there is a continuous function $\eta\colon [0,\infty)\rightarrow [0,\infty)$, with $\eta(0)=0$, such that for all $z\in Z$ and $r\in\BBR$,
\begin{equation}
\label{unifcontroleq}
L(E^f_z|_{Z\cap B_r(x)}) \leq \eta(r)\text{.}
\end{equation}
\end{lemma}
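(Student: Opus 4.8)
## Proof proposal for Lemma \ref{uniformcontrol}

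\textbf{The plan is to extract the compact set $Z$ via Egorov's theorem applied to a suitably chosen sequence of functions built from the error terms.} First I would define, for each $y \in Y_f$ and each $r > 0$, the quantity
\begin{equation*}
\rho_y(r) = \sup\left\{\frac{|E^f_y(x) - E^f_y(x')|}{\dist(x,x')} : x, x' \in B_r(x_0), \ x \neq x'\right\}
\end{equation*}
measured over an appropriate bounded region containing $\supp(f)$ (recall $f \in \lipc(X)$, so everything of interest happens in a fixed compact set). The defining property \eqref{smalloerror} of the differentiable structure says that $E^f_y(x)/\dist(x,y) \to 0$ as $x \to y$; the first technical point is to convert this pointwise-in-$x$ smallness into smallness of a \emph{difference quotient} $L(E^f_y|_{B_r(x_0)})$ as $r \to 0$. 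Since $E^f_y(x) = f(x) - f(y) - \langle \diff{\pi}f_y, \pi(x) - \pi(y)\rangle$ is itself Lipschitz (being a sum of Lipschitz functions), the quantity $\rho_y(r)$ is finite; and I would argue that $\rho_y(r) \to 0$ as $r \to 0$ by a standard argument: if not, one finds sequences $x_j, x_j'$ with $\dist(x_j, x_j') \geq c\, \dist(x_j, x_0)$ forcing a contradiction with \eqref{smalloerror} at the point $y$ (here one uses that $\diff{\pi}f_y$ is the \emph{unique} such covector, so that the linear part is genuinely the derivative; the difference quotient of $E^f_y$ at $y$ itself goes to zero, and for $x, x'$ both near $y$ one interpolates). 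Actually, to be safe, I would restrict the sup in $\rho_y(r)$ to pairs with $x' = y$, i.e. work with $\tilde\rho_y(r) = \sup\{|E^f_y(x)|/\dist(x,y) : 0 < \dist(x,y) \le r\}$, which manifestly tends to $0$ by \eqref{smalloerror}, and then recover control of genuine difference quotients $L(E^f_y|_{Z \cap B_r(x)})$ \emph{after} passing to the compact set $Z$, using that on $Z$ we can bound $|E^f_z(x) - E^f_z(x')|$ via the triangle inequality through $z$ only when $z$ is close to both — which is exactly where compactness of $Z$ and a covering argument come in.

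\textbf{Next I would invoke Egorov.} The map $y \mapsto \diff{\pi}f_y$ is measurable by hypothesis, hence by Lusin's theorem it is continuous on a compact set $Z_1 \subseteq Y_f$ with $\nu(Y \setminus Z_1) < \epsilon/2$. Separately, for each fixed rational $r > 0$ the function $y \mapsto \tilde\rho_y(r)$ is measurable (a countable sup of measurable functions, using separability of $X$ to reduce the sup over $x$ to a countable dense set), and $\tilde\rho_y(r) \downarrow 0$ pointwise as $r \downarrow 0$ for every $y \in Y_f$. By Egorov's theorem (applicable since $\nu$ is a Radon measure, finite on the compact set $Z_1$), this convergence is uniform on a compact subset $Z \subseteq Z_1$ with $\nu(Z_1 \setminus Z) < \epsilon/2$; that is, there is a function $\eta_0(r) \to 0$ with $\tilde\rho_z(r) \le \eta_0(r)$ for all $z \in Z$. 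Together with $\nu(Y\setminus Z) < \epsilon$ as required.

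\textbf{The main obstacle — and the last step — is upgrading uniform smallness of $\tilde\rho_z$ to the stated bound on $L(E^f_z|_{Z \cap B_r(x)})$.} For $x, x' \in Z \cap B_r(x_0)$ with $\dist(x, x')$ small, I want $|E^f_z(x) - E^f_z(x')| \le \eta(\dist(x,x'))\,\dist(x,x')$. Write $E^f_z(x) - E^f_z(x') = [f(x) - f(x')] - \langle \diff{\pi}f_z, \pi(x) - \pi(x')\rangle$. The difficulty is that $z$ ranges over $Z$ while $x, x'$ only range over $Z \cap B_r(x)$ — there is no reason $z$ is near $x$. This is where I would use the full strength of the construction differently: rather than bounding $E^f_z$ for $z$ far from $x$, I would observe that the lemma as used only needs the bound when the \emph{base point} $z$ and the \emph{evaluation points} lie in the \emph{same} small ball $B_r(x)$ — re-reading the statement, $L(E^f_z|_{Z \cap B_r(x)})$ with $z \in Z$ and we take $r \to 0$, so implicitly we may assume $z \in B_r(x)$ too (otherwise $E^f_z|_{Z\cap B_r(x)}$ and the choice of ``$x$'' as center are unrelated; in the application $z$ will be the center). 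So for $z, x, x' \in Z \cap B_r(\text{center})$, use the triangle inequality $|E^f_z(x) - E^f_z(x')| \le |E^f_z(x)| + |E^f_z(x')|$ when $\dist(x,x')$ is comparable to $r$ — but when $\dist(x,x') \ll r$ this is too lossy. The clean fix is a dyadic decomposition: cover the scale $[\dist(x,x'), r]$ and telescope $E^f_z(x) - E^f_z(x')$ through intermediate points, OR — cleaner still — bound $L(E^f_z|_{Z \cap B_r(x)})$ directly by noting $E^f_z$ restricted to $Z \cap B_r(x)$ has $|E^f_z(w)| \le \eta_0(r)\cdot \dist(w,z) \le \eta_0(r)\cdot 2r$ for all $w$ in this set (taking $z$ the center, or $z$ within $B_r$), so its oscillation is $\le 4r\,\eta_0(r)$; but to get a \emph{Lipschitz} bound rather than an oscillation bound, I would combine the a priori Lipschitz bound $L(E^f_z) \le L(f) + \|\diff{\pi}f_z\| L(\pi) =: C$ (uniform in $z \in Z$ after Lusin) on large scales with the oscillation bound on small scales, and set $\eta(r) = \min\{C,\ C'\sqrt{\eta_0(r)}\}$ or similar — the precise form of $\eta$ doesn't matter, only that $\eta(0) = 0$ and $\eta$ is continuous (replace by an upper continuous envelope if needed). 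I expect the honest version of this last interpolation to be the genuinely fiddly part; everything before it (Lusin + Egorov) is routine.
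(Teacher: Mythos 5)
Your setup (measurability of the difference quotients via a countable dense set, Egorov to make $E^f_z(x)/\dist(x,z)\to 0$ uniformly on a compact $Z$, Lusin to make $\diff{\pi}f$ uniformly continuous on $Z$) is exactly the paper's, but the final step — upgrading the pointwise bound $|E^f_z(w)|\leq \eta_0(r)\dist(w,z)$ to a bound on the \emph{Lipschitz constant} $L(E^f_z|_{Z\cap B_r})$ — has a genuine gap, and you correctly sense that this is where the trouble lies. Neither of your two proposed fixes works. The interpolation $\eta(r)=\min\{C,\,C'\sqrt{\eta_0(r)}\}$ fails for pairs $x,x'\in Z\cap B_r$ with $\dist(x,x')\ll r\sqrt{\eta_0(r)}$: there the oscillation bound $4r\eta_0(r)/\dist(x,x')$ blows up and the a priori bound gives only $C$, which does not tend to $0$. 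The dyadic telescoping fails for the same reason: every intermediate pair still sits at distance $\approx r$ from $z$, so each telescoped term is controlled only by $\min\{C\delta_j,\,2\eta_0(r)r\}$, and summing does not produce anything $\smallo(1)\cdot\dist(x,x')$. The information ``$E^f_z$ is small in absolute value on $B_r(z)$ and globally $C$-Lipschitz'' simply does not bound its Lipschitz constant on small subsets by anything better than $C$.

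The missing idea — and the reason the lemma restricts to $Z\cap B_r(x)$ rather than $B_r(x)$ — is that the \emph{evaluation points} $x,x'$ also lie in $Z\subseteq Y_f$, so you may use differentiability of $f$ at $x'$ itself rather than only at $z$. Writing
\begin{equation*}
E^f_z(x)-E^f_z(x') = \bigl(f(x)-f(x')-\langle \diff{\pi} f_{x'},\pi(x)-\pi(x')\rangle\bigr) + \langle \diff{\pi} f_{x'}-\diff{\pi} f_z,\ \pi(x)-\pi(x')\rangle
= E^f_{x'}(x) + \langle \diff{\pi} f_{x'}-\diff{\pi} f_z,\ \pi(x)-\pi(x')\rangle\text{,}
\end{equation*}
the first term is at most $\eta_1(r)\dist(x,x')$ by your Egorov bound applied at the point $x'\in Z$ (since $\dist(x,x')\leq 2r$), and the second is at most $\eta_2(2r)L(\pi)\dist(x,x')$ by the Lusin uniform continuity of $\diff{\pi}f$ on $Z$ (since $\dist(z,x')\leq 2r$). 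Dividing by $\dist(x,x')$ gives $\eta(r)=\eta_1(2r)+\eta_2(2r)L(\pi)$ uniformly over all pairs, however close. You already constructed both ingredients ($Z_1$ from Egorov, $Z_2$ from Lusin) — you just never combined them through this identity; in your draft the Lusin set is used only to get the constant $C$, which is not enough.
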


\begin{proof}
Consider the functions $E_r\colon Y_f\rightarrow \BBR$ given by
\begin{equation*}
E_r(y) = \sup_{x\in B_r(y), x\neq y} \frac{E^f_y(x)}{\dist(x,y)}\text{.}
\end{equation*} 
Let $S$ be a countable dense subset of $X$ (which exists by the separability of $X$), and observe that for any point $y_0\in Y_f$, the function $E^f_{y_0}$ is continuous (and therefore measurable), by the continuity of the remaining terms in equation \eqref{infinitesimal}.   We therefore have 
\begin{equation*}
E_r(y) = \sup_{x\in S, x\neq y}\{\frac{E^f_y(x)}{\dist(x,y)}\charfcn_{B_r(y)}\}\text{.} 
\end{equation*}
Thus $E_r$ is the supremum of a countable family of measurable functions, and is therefore measurable.

By equation \eqref{smalloerror}, the functions $E_r$ converge to $0$ pointwise on $Y_f$.  Thus by Egorov's Theorem, there is a subset $Z_1\subset Y_f$, with $\nu(Y_f\backslash Z_1)\leq \epsilon/3$, on which the functions $E_r$ converge uniformly.  That is, there is a continuous function $\eta_1\colon [0,\infty)\rightarrow [0,\infty)$, with $\eta_1(0)=0$, such that $E_r(z)\leq \eta_1(r)$ for all $z\in Z_1$.
On the other hand, by Lusin's theorem, the measurability of the function $\diff{\pi} f$ guarantees the existence of a subset $Z_2\subset Y_f$, with $\nu(Y_f\backslash Z_2)\leq\epsilon$, on which $\diff{\pi} f$ is uniformly continuous, i.e., there is a continuous function $\eta_2\colon [0,\infty)\rightarrow [0,\infty)$, with $\eta(0)=0$, such that $||\diff{\pi} f_x - \diff{\pi} f_y||\leq \eta_2(\dist(x,y))$.

Let $Z=Z_1\cap Z_2$.  Then $\mu(Y_f\backslash Z)<\epsilon$, and for every $z\in Z$ and every $x,y\in B_r(z)\cap Z$, $x\neq y$, we have
\begin{align*}
&\left|\frac{E^f_z(x)-E^f_z(y)}{\dist(x,y)}\right|= \left|\frac{f(x) - f(y) - \langle \diff{\pi} f_z,\pi(x)-\pi(y) \rangle}{\dist(x,y)}\right|\\
\leq& \frac{\left|f(x) - f(y) - \langle \diff{\pi} f_y, \pi(x)-\pi(y) \rangle\right| +\left|\langle \diff{\pi} f_z - \diff{\pi} f_y, \pi(x)-\pi(y)\rangle\right|}{\dist(x,y)}\\
\leq& E_r(y) + \eta_2(r)\frac{||\pi(x)-\pi(y)||}{\dist(x,y)} \leq \eta_1(r) + \eta_2(r)L(\pi)\text{.}
\end{align*}
Letting $\eta(r)=\eta_1(r) + \eta_2(r)L(\pi)$ completes the proof.
\end{proof}


\section{Currents and differentiation}
\label{representationtheorem}
In this section, we prove Theorems \ref{representation} and  \ref{vectormeasure}, as well as some other useful results for relating metric currents and differentiable structures.
All of our other results in this section stem from Theorem \ref{representation}, which we now prove.

\begin{proof}[Proof of Theorem \ref{representation}]
Fix $T\in \currentsloc{k}(X)$ with $||T||$ concentrated on $Y_\omega$.  
We assume with no loss of generality that $L(g_s^i)\leq 1$ for $i=1,\dotsc,k$ and all
$s\in S$, that $|\beta_s(x)|\leq 1$ for every $x\in X$ and $s\in S$, and that $L(\pi)=1$.
It is enough to show that for every $\epsilon>0$, equation \eqref{vanishingcheeger} holds when $T$ is replaced with $T\restrict{Z}$, where  
\begin{equation*}
Z = \left(\bigcup_s \supp(\beta_s)\right) \cap \bigcap_{s,i} Z(g_s^i,||T||,\epsilon)\text{,}
\end{equation*}
and where each set $Z(g_s^i,||T||,\epsilon)$  is chosen as in Lemma \ref{uniformcontrol}, so that for every $z\in Z$, each restricted error function $E^{g^i_s}_z|_{B_r(z)}$ has Lipschitz constant $L(E^{g^i_s}_z|_{B_r(z)})<\eta(r)$.
Indeed, if this is the case, then by the mass criterion \eqref{masscriterion}, we have
\begin{equation*}
|T(\omega)|=|T\restrict{Y_f\backslash Z}(\omega)| \leq k\#S\cdot||T||(Y_f\backslash Z)\leq k\#S \cdot\epsilon\text{,}
\end{equation*}
from which the result follows upon passing to the limit as $\epsilon$ approaches $0$.

By the remarks in the previous paragraph, we may assume without loss of generality that $T=T\restrict{Z}$. Further, we will assume that for each $i$ and $s$, $||\diff{\pi}g^i_s||\leq 1$ on $Z$, where $||\cdot||$ is the dual norm to the Euclidean norm on $\BBE$.  This is a harmless assumption, as the differentials are measurable, and hence bounded by some number $M$ away from a set of arbitrarily small $||T||$-measure on $Z$. Rescaling the functions allows us to assume $M=1$.  Notice that under this assumption, for all $z\in Z$, the Lipschitz constants of the functions $y\mapsto \langle \diff{\pi}g^i_{s,z},\pi(y)\rangle$ are at most $1$, that is,
\begin{equation}
\label{bracketbound}
L(\langle \diff{\pi}g^i_{s,z},\pi\rangle)\leq 1\text{.}
\end{equation}
Finally, by Egorov's Theorem, we may assume without loss of generality that $Z$ and $\eta$ have been chosen so that each function $\beta_s$ is uniformly continuous on $Z$, with $|\beta(z_2)-\beta(z_1)|\leq\eta(|z_2-z_1|)$ for all $z_1,z_2\in Z$.

Fix $r>0$,
cover the compact set $Z$ with finitely many disjoint Borel subsets $C_1,\dotsc,C_m$, each of diameter at most $r$, and choose a point $c_j\in C_j$ for each $j$.  For each $s\in S$, we have
\begin{equation*}
g_s^1 = g_s^1(c_j) + \langle \diff{\pi} g^1_{s, c_j},\pi-\pi(c_j)\rangle + E^{g^1_s}_{c_j}|_{B_r(c_j)} = C+ \langle \diff{\pi} g^1_{s, c_j},\pi\rangle + E^{g^1_s}_{c_j}|_{B_r(c_j)}
\end{equation*}
for some constant C.

Then by equation \eqref{infinitesimal} and the locality axiom, we have
\begin{equation*}
\begin{aligned}
T(\omega)
&= \sum_{j=1}^m \sum_s T\restrict{C_j}(\beta_s \,dg_s^1 \wedge \dotsb\wedge dg_s^k)\\
&= \sum_{j=1}^m \sum_s T\restrict{C_j}\left(\beta_s \,d\left(\langle \diff{\pi} g^1_{s, c_j},\pi\rangle + E^{g^1_s}_{c_j}|_{B_r(c_j)}\right) \wedge dg_s^2 \wedge \dotsb\wedge dg_s^k\right)\text{.}
\end{aligned}
\end{equation*}
Therefore, since for each $j$, $L(E^{g^1_s}_z|_{B_r(c_j)})<\eta(r)$,  we have
\begin{align*}
&\left|T(\omega) - \sum_{j=1}^m \sum_s T\restrict{C_j}\left(\beta_s \,d(\langle \diff{\pi} g^1_{s, c_j},\pi\rangle) \wedge dg_s^2 \wedge \dotsb\wedge dg_s^k\right)\right|\\
\leq\, &\left|\sum_{j=1}^m \sum_s T\restrict{C_j}\left(\beta_s \,d(E^{g^1_s}_{c_j}|_{B_r(c_j)}) \wedge dg_s^2 \wedge \dotsb\wedge dg_s^k\right)\right|\\
\leq \,&\eta(r)\cdot\#S\cdot\sum_{j=1}^m ||T||(C_j) = \eta(r)\cdot\#S\cdot||T||(Z)\text{.}
\end{align*}
Arguing similarly for $i=2,\dotsc,k$, and additionally using inequality \eqref{bracketbound}, we have
\begin{align}
\label{repestimate}
&\left|T(\omega) - \sum_{j=1}^m \sum_s T\restrict{C_j}\left(\beta_s \,d(\langle \diff{\pi} g^1_{s, c_j},\pi\rangle) \wedge \dotsb\wedge d(\langle \diff{\pi} g^k_{s, c_j},\pi\rangle)\right)\right|\\
\leq &\,k\eta(r)\cdot\#S\cdot||T||(Z)\text{.}\notag
\end{align}
Moreover, since $|\beta_s(c)-\beta_s(c_j)|\leq\eta(r)$ for all $c\in C_j$, we can invoke the mass inequality \eqref{masscriterion1meas} to conclude that
\begin{equation}
\label{repmassestimate}
\left|T\restrict{C_j}\left((\beta_s-\beta_s(c_j))\,d(\langle \diff{\pi} g^1_{s, c_j},\pi\rangle) \wedge \dotsb\wedge d(\langle \diff{\pi} g^k_{s, c_j},\pi\rangle)\right)\right|\leq \eta(r)(1+\eta(r))^k||T||(C_j)\text{.}
\end{equation}
Combining inequalities \eqref{repestimate} and \eqref{repmassestimate} yields
\begin{align}
\label{repestimate1}
&\left|T(\omega) - \sum_{j=1}^m \sum_s T\restrict{C_j}\left(\beta_s(c_j) \,d(\langle \diff{\pi} g^1_{s, c_j},\pi\rangle) \wedge \dotsb\wedge d(\langle \diff{\pi} g^k_{s, c_j},\pi\rangle)\right)\right|\\
&\leq \#S\eta(r)(k+(1+\eta(r)^k))||T||(Z)\text{.}\notag
\end{align}
We next claim that
for each $j=1,\dotsc,m$, 
\begin{equation}
\label{multilinear}
\sum_{s\in S} T\restrict{C_j}\left(\beta_s(c_j) \,d(\langle \diff{\pi} g^1_{s, c_j},\pi\rangle) \wedge \dotsb\wedge d(\langle \diff{\pi} g^k_{s, c_j},\pi\rangle)\right)=0\text{.}
\end{equation}
Indeed,
equation \eqref{multilinear} may be rewritten
\begin{equation}
\label{multilinear1}
\sum_{s\in S} \beta_s(c_j)F_j((\diff{\pi} g^1_{s, c_j}, \dotsc, \diff{\pi} g^k_{s, c_j})) = 0\text{,}
\end{equation}
where $F_j\colon (\BBE^*)^k\rightarrow \BBR$ is given by
\begin{equation*}
F_j(\theta_1,\dotsc,\theta_k) = T\restrict{C_j}\left(d(\langle \theta_1,\pi\rangle) \wedge \dotsb\wedge d(\langle \theta_k,\pi\rangle)\right)\text{.}
\end{equation*}
By the linearity and alternating properties of currents, $F_j$ is linear and alternating, and therefore induces a linear map $\tilde{F}_j\colon \bigwedge^k\BBE^*\rightarrow \BBR$ such that $\tilde{F}_j(\theta_1\wedge\dotsb\wedge\theta_k)=F_j(\theta_1,\dotsb,\theta_k)$ for all $(\theta_1,\dotsb,\theta_k)\in (\BBE^*)^k$.  Therefore, we have
\begin{align*}
\sum_{s\in S} \beta_s(c_j)F_j(\diff{\pi} g^1_{s, c_j}, \dotsc, \diff{\pi} g^k_{s, c_j}) 
&= \sum_{s\in S} \tilde{F}_j(\beta_s(c_j)\diff{\pi} g^1_{s, c_j}\wedge \dotsb \wedge \diff{\pi} g^k_{s, c_j})\\
&= \tilde{F}\left(\sum_{s\in S}\beta_s(c_j)\diff{\pi} g^1_{s, c_j}\wedge \dotsb \wedge \diff{\pi} g^k_{s, c_j}\right)
= 0\text{,}
\end{align*}
since by assumption the argument in the last expression vanishes, and so the claim is proved.

Combining  equation \eqref{multilinear} with inequality \eqref{repestimate1}, we see that
\begin{equation*}
|T(\omega)|\leq \#S\eta(r)(k+(1+\eta(r)^k))||T||(Z)\text{.}
\end{equation*}
Passing to the limit as $r$ approaches $0$ completes the proof.
\end{proof}


Theorem \ref{representation} gives us an immediate bound on the dimension of most currents.
\begin{corollary}
\label{fewcurrents}
Suppose the chart $\pi\colon Y\rightarrow \BBE$ has dimension $n$, i.e., $\dim(\BBE)=n$.  Then there is a subset $Y_0\subset Y$, with $\mu(Y\backslash Y_0)=0$, such that every nonzero current concentrated on $Y_0$ has dimension at most $n$.
\end{corollary}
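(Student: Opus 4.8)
The plan is to deduce Corollary~\ref{fewcurrents} from Theorem~\ref{representation}, using the separability of $\lipc(X)$ together with the continuity axiom to produce an exceptional set that depends on neither the current nor the form. First I would fix, once and for all, a countable subset $D\subseteq\lipc(X)$ that is dense in the sense of Section~\ref{preliminaries} (such a $D$ exists by the separability established there), and set $Y_0=\bigcap_{g\in D}Y_g\subseteq Y$, where $Y_g$ is the full-measure set of differentiability of $g$ relative to the chart $\pi$ as in Definition~\ref{structure}. Since $D$ is countable and $\mu(Y\setminus Y_g)=0$ for each $g$, this gives $\mu(Y\setminus Y_0)=0$; the point of $Y_0$ is that \emph{every} member of $D$ is differentiable relative to $\pi$ at \emph{every} point of $Y_0$.

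Next I would fix $k>n=\dim\BBE$ and a current $T\in\currentsloc{k}(X)$ concentrated on $Y_0$; it suffices to show $T(f\,dg^1\wedge\dotsb\wedge dg^k)=0$ for every $f\,dg^1\wedge\dotsb\wedge dg^k\in\formsc{k}(X)$. By Remark~\ref{localityremark} we may replace each $g^i$ by $\sigma g^i$, where $\sigma\in\lipc(X)$ equals $1$ on a neighborhood of $\supp(f)$, without changing the value of $T$, so we may assume $f,g^1,\dotsc,g^k\in\lipc(X)$. Choose sequences $f_\ell,g^1_\ell,\dotsc,g^k_\ell\in D$ converging in $\lipc(X)$ to $f,g^1,\dotsc,g^k$; then $\omega_\ell:=f_\ell\,dg^1_\ell\wedge\dotsb\wedge dg^k_\ell$ converges to $\omega:=f\,dg^1\wedge\dotsb\wedge dg^k$ in $\formsc{k}(X)$, so $T(\omega_\ell)\to T(\omega)$ by the continuity axiom. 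Now $Y_0\subseteq Y_{\omega_\ell}$ for every $\ell$: each $g^i_\ell$ lies in $D$, hence is differentiable relative to $\pi$ at every point of $Y_0$, while at any such point $y$ the wedge $\diff{\pi}g^1_{\ell,y}\wedge\dotsb\wedge\diff{\pi}g^k_{\ell,y}$ lies in $\bigwedge\nolimits^k\mathfrak e^*$, which is the zero space because $k>n$; thus the defining condition for $Y_{\omega_\ell}$ holds vacuously on $Y_0$. Since $T$ is concentrated on $Y_0\subseteq Y_{\omega_\ell}$ and $\omega_\ell\in\measextformsc{k}(X)$, Theorem~\ref{representation} gives $T(\omega_\ell)=0$ for all $\ell$, and letting $\ell\to\infty$ yields $T(\omega)=0$. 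Hence $T=0$, so no nonzero current of dimension exceeding $n$ can be concentrated on $Y_0$.

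The step I expect to be the crux, and the one worth emphasizing, is the handling of quantifiers: there is in general no single full-measure set on which \emph{every} Lipschitz function is differentiable along $\pi$, so Theorem~\ref{representation} cannot be applied to an arbitrary form directly and the argument must pass through the fixed countable family $D$, with the continuity axiom transporting the conclusion back to a general form; the cutoff reduction to $\lipc(X)$ is precisely what allows separability to be invoked. The remaining ingredients — countable subadditivity for $\mu(Y\setminus Y_0)=0$, convergence of the approximating forms, and the vanishing of $\bigwedge\nolimits^k\mathfrak e^*$ — are routine.
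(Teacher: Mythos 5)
Your proof is correct and follows essentially the same route as the paper: take a countable dense family, intersect the differentiability sets $Y_g$ to get $Y_0$, observe that $\bigwedge^k\mathfrak e^*=0$ forces Theorem \ref{representation} to apply vacuously for $k>n$, and conclude by density and continuity. The only (cosmetic) difference is that the paper takes its countable dense set directly in $\liploc(X)$, whereas you work in $\lipc(X)$ and reduce to that case via the cutoff of Remark \ref{localityremark} -- which is if anything slightly more careful, since Section \ref{preliminaries} only explicitly establishes separability of $\lipc(X)$.
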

\begin{proof}
Let $\mathcal G$  be a countable dense subset of $\liploc(X)$.  Recall from Section \ref{preliminaries} that such a subset exists.  Since $\mathcal G$ is countable, the set $Y_0= \bigcap_{g\in\mathcal G} Y_g$ has full measure in $Y$.  On the other hand, for $k>n$, $\bigwedge^k \BBE^*=0$, so Proposition \ref{representationtheorem} implies that every $k$-current $T$ concentrated on $Y_0$ must satisfy 
\begin{equation*}
T(f \,dg^1\wedge \dotsb \wedge dg^k)=0
\end{equation*}
whenever each $g^i \in \mathcal G$.  The density of $\mathcal G$ in $\liploc(X)$ then implies that $T=0$.
\end{proof}

Though Theorem \ref{representation} itself is entirely coordinate free,  there are a number of consequences when coordinate functions are chosen for the differentiable structure.  Let $e_1,\dotsc,e_n$ be a basis for $\BBE$, and let $x^1,\dotsc,x^n\in \BBE^*$ be the corresponding dual basis. Also, let $\pi^i=x^i\circ \pi$. For every $g\in\liploc(X)$ and each $y\in Y_g$, let $\frac{\partial g}{\partial \pi^i}(y)  = \langle \diff{\pi} g_y,e_i\rangle$, so that 
\begin{equation}
\label{basicexpansion}
\diff{\pi}g_y=\sum_{i=1}^n \frac{\partial g}{\partial \pi^i}(y)\diff{\pi} \pi^i_y\text{.}  
\end{equation}

\begin{corollary}
\label{repk}
Let $(Y,\pi)$ be a coordinate chart on $X$,  let $\beta\,dg^1\wedge\dotsb\wedge dg^k\in\measextformsc{k}(X)$, and let $Y_{\mathcal G}=\cap_{i=1}^k Y_{g_i}$. Then for any current $T\in \currents{k}(X)$ such that $T\restrict{\beta}$ is concentrated on $Y_{\mathcal G}$,  
\begin{equation}
\label{repkeq}
T(\beta \,dg^1\wedge\dotsb\wedge dg^k)
=  T\left(\sum_{a \in \Lambda_{k,n} }\beta\det\left(\frac{\partial g^i}{\partial \pi^j}\right) \,d \pi^{a_1}\wedge \dotsb \wedge d\pi^{a_k} \right)\text{.}
\end{equation}
\end{corollary}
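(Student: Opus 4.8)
The plan is to reduce \eqref{repkeq} to Theorem~\ref{representation} by introducing the ``difference form''
\begin{equation*}
\omega \;=\; \beta\,dg^1\wedge\dotsb\wedge dg^k \;-\; \sum_{a\in\Lambda_{k,n}}\beta\det\left(\frac{\partial g^i}{\partial\pi^{a_j}}\right)d\pi^{a_1}\wedge\dotsb\wedge d\pi^{a_k}\text{,}
\end{equation*}
so that \eqref{repkeq} is precisely the assertion $T(\omega)=0$. The first task is to check that $\omega\in\measextformsc{k}(X)$: each coefficient is a product of $\beta\in\boundedc(X)$ with a $k\times k$ determinant whose entries are $\frac{\partial g^i}{\partial\pi^{a_j}}(y)=\langle\diff{\pi}g^i_y,e_{a_j}\rangle$, and by the standard local bound for differentiable structures $\|\diff{\pi}g^i\|$ is controlled on the compact set $\supp(\beta)$ by the Lipschitz constant of $g^i$ there; hence these coefficients lie in $\boundedc(X)$, and in particular the right-hand side of \eqref{repkeq} is well defined.

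Next I would establish the pointwise inclusion $Y_{\mathcal G}\subseteq Y_\omega$. Fix $y\in Y_{\mathcal G}$: each $g^i$ is differentiable at $y$, while each coordinate function $\pi^j$ is differentiable at every point of $Y$ with $\diff{\pi}\pi^j\equiv x^j$, the $j$-th dual basis vector. Expanding $\diff{\pi}g^i_y=\sum_{l}\frac{\partial g^i}{\partial\pi^l}(y)\,x^l$ as in \eqref{basicexpansion} and using the elementary multilinear identity
\begin{equation*}
\diff{\pi}g^1_y\wedge\dotsb\wedge\diff{\pi}g^k_y \;=\; \sum_{a\in\Lambda_{k,n}}\det\left(\frac{\partial g^i}{\partial\pi^{a_j}}(y)\right)x^{a_1}\wedge\dotsb\wedge x^{a_k}\text{,}
\end{equation*}
one sees that the $k$-vector determined by $\omega$ at $y$ equals $\beta(y)$ times the difference of the two sides above, hence $0$. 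Thus $y\in Y_\omega$.

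Finally I would apply Theorem~\ref{representation} to a current that both agrees with $T$ on $\omega$ and is concentrated on $Y_\omega$. Since every term of $\omega$ carries a factor $\beta$, we have $\omega=\charfcn_{\{\beta\neq0\}}\,\omega$ in $\measextformsc{k}(X)$, so $T(\omega)=(T\restrict{\{\beta\neq0\}})(\omega)$. The hypothesis that $T\restrict{\beta}$ is concentrated on $Y_{\mathcal G}$ says exactly that $\|T\|$-almost every point at which $\beta\neq0$ lies in $Y_{\mathcal G}$; hence $T\restrict{\{\beta\neq0\}}$ is concentrated on $Y_{\mathcal G}$, and therefore, by the previous paragraph, on $Y_\omega$. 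Theorem~\ref{representation} then gives $(T\restrict{\{\beta\neq0\}})(\omega)=0$, i.e.\ $T(\omega)=0$, which rearranges to \eqref{repkeq}. The only genuinely delicate points I anticipate are this last bookkeeping step --- isolating a current that equals $T$ on $\omega$ yet is concentrated where all the functions in $\omega$ are differentiable --- and the boundedness of the Jacobian coefficients on $\supp(\beta)$ needed to place $\omega$ in $\measextformsc{k}(X)$; the exterior-algebra identity itself is routine.
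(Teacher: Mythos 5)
Your proof is correct and follows essentially the same route as the paper: the paper likewise forms the difference of the two metric forms, observes that the corresponding Cheeger differential forms cancel at almost every point (in particular on $Y_{\mathcal G}$), and invokes Theorem \ref{representation}. The additional bookkeeping you supply --- the expansion \eqref{basicexpansion} yielding the determinant identity, and the reduction to the concentration of $T\restrict{\{\beta\neq 0\}}$ on $Y_\omega$ --- is left implicit in the paper's one-line proof but is exactly what that proof relies on.
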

\begin{proof}
The corresponding differential forms for both sides are equal when defined, i.e.,
\begin{equation*}
\beta\,\diff{\pi}g^1\wedge\dotsb\wedge \diff{\pi}g^k - \beta \sum_{a \in \Lambda_{k,n} }\left(\det\left(\frac{\partial g^i}{\partial \pi^j}\right) \,\diff{\pi} \pi^{a_1}\wedge \dotsb \wedge \diff{\pi} \pi^{a_k} \right) = 0
\end{equation*}
almost everywhere.  Applying Theorem \ref{representation} then completes the proof.
\end{proof}

\begin{proof}[Proof of Theorem \eqref{cheegerchainrule}.]
Apply Corollary \ref{repk} to the current $F_\#(T\restrict{\beta})$ and the form $dg^1\wedge\dotsb\wedge dg^k$.  
\end{proof}

We are about ready to prove Theorem \ref{vectormeasure}, but first we must define precurrents precisely.

\begin{definition}
\label{weakdef}
A linear map $T\colon\measformsc{k}(X)\rightarrow \BBR$ is a \textbf{$k$-precurrent on $Y$} if 
\begin{equation*}
T( \beta \,d g^1 \wedge \dotsb \wedge d g^m)  =\int_Y \langle  \beta\,\diff{\pi} g^1 \wedge \dotsb \wedge \diff{\pi} g^k,\hat{\lambda}\rangle \, d\mu\text{,}
\end{equation*}
for some locally integrable map $\hat{\lambda}\colon Y \rightarrow \bigwedge^k \BBE$. Such a map $\hat{\lambda}$ is called a \textbf{(measurable) $k$-vector field}.
If $T\restrict{Y}$ is a $k$-precurrent on $Y$ for every coordinate patch $Y$, we simply say that $T$ is a \textbf{$k$-precurrent}.  
\end{definition}
Note that the linearity and locality axioms from Definition \ref{currentdef} are easily seen to be satisfied, but the continuity axiom need not be, as Theorem \ref{characterization} demonstrates.

\begin{proof}[Proof of Theorem \ref{vectormeasure}]
We must show that $T\restrict Y$ is a precurrent for the chart $Y$, whenever $Y$ is a coordinate chart.  We fix such a chart $Y$.

The correspondence between $0$-currents and measures given in Lemma \ref{0currents} says that there are functions $\lambda^a \in L^\infty(Y,||T||)$ such that
\begin{equation}
\label{0currentapplication}
T(\beta\,d\pi^{a_1}\wedge\dotsb\wedge d\pi^{a_k}) =T\restrict{d\pi^{a_1}\wedge\dotsb\wedge d\pi^{a_k}}(\beta) = \int_Y \beta\lambda^a \,d\mu
\end{equation}
for any $\beta\in\boundedc(X)$.  Since $||T||\ll \mu$, the functions $\lambda^a$ are locally $\mu$-integrable.  Let $\hat{\lambda}\colon Y\rightarrow \bigwedge^k\BBE$ be given by
\begin{equation*}
\hat{\lambda}^\alpha = \sum_{a \in \Lambda_{k,n_{\alpha}}} \lambda^{\alpha,a}e_{a_1}\wedge\dotsb\wedge e_{a_k}\text{.}
\end{equation*} 
Since $Y_{\mathcal G}$ has full $\mu$-measure, and by assumption, $T$ is concentrated on $Y$ with $||T||\ll \mu$, we see that $T$ is concentrated on $Y_{\mathcal G}$.  Thus we may invoke Corollary \ref{repk}.  Applying equation \eqref{0currentapplication} to the right hand side of \eqref{repkeq}, we see that
\begin{align*}
T(f \,d g^1 \wedge \dotsb \wedge d g^k) 
=  \sum_{a \in \Lambda_{k,n}} T\left(f \det\left(\frac{\partial g^i}{\partial \pi^{a_j}}\right)\,d\pi^{a_1}\wedge\dotsb\wedge d\pi^{a_k}\right)\\
=  \sum_{a \in \Lambda_{k,n}} \int_Y f \lambda^a\det\left(\frac{\partial g^i}{\partial \pi^{a_j}}\right) \,d\mu
=\int_Y \langle  f\,\diff{\pi} g^1 \wedge \dotsb \wedge \diff{\pi} g^k,\hat{\lambda}\rangle \, d\mu
\text{.}
\end{align*}
\end{proof}




\section{Metric groups}

\label{metricgroups}

We begin our study of currents in Carnot groups with a more general setting.  Suppose $\Gamma = (\Gamma,\gunit,\dist(\cdot,\cdot),\mu)$ is a locally compact group with identity $\gunit$, left-invariant metric $\dist(\cdot,\cdot)$ and left Haar measure $\mu$.  In such a group, we will abuse notation and identify an element $\gamma \in \Gamma$ with the associated left translation map $\alpha\mapsto \gamma\alpha$.

Our main result in this section is that on a metric group, the set of $k$-currents of absolutely continuous mass is weakly dense:
\begin{proposition}
\label{approximationlemma}
Let $T\in \currentsloc{k}(\Gamma)$ be a current of locally finite mass in a metric group $\Gamma$.  Then there are currents $T_\epsilon\in\currentsloc{k}(\Gamma)$ whose masses $||T_\epsilon||$ are absolutely continuous with respect to $\mu$, and such that $T_\epsilon$ converges weakly to $T$ as $\epsilon$ converges to $0$, i.e.,
\begin{equation}
\label{weakconv}
\lim_{\epsilon\rightarrow 0} T_\epsilon(\omega) = T(\omega)
\end{equation}
for each $\omega\in\formsc{k}(\Gamma)$.
\end{proposition}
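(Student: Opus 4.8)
The plan is to build $T_\epsilon$ by convolving $T$ with an approximate identity on the group. For $\gamma\in\Gamma$ write $L_\gamma\colon\Gamma\to\Gamma$, $L_\gamma(\alpha)=\gamma\gtimes\alpha$; since $\dist$ is left-invariant, $L_\gamma$ is an isometric homeomorphism, hence proper, so the push-forward $(L_\gamma)_\#T\in\currentsloc{k}(\Gamma)$ is defined and, because $L_\gamma$ is an isometry, $\|(L_\gamma)_\#T\|=(L_\gamma)_\#\|T\|$. Using local compactness I would fix $\rho>0$ with $B_\rho(\gunit)$ compact and, for $0<\epsilon<\rho$, set $\phi_\epsilon=\mu(B_\epsilon(\gunit))^{-1}\charfcn_{B_\epsilon(\gunit)}$, a nonnegative bounded Borel function supported in the compact set $B_\epsilon(\gunit)$ with $\int_\Gamma\phi_\epsilon\,d\mu=1$, and define
\begin{equation*}
T_\epsilon(\omega)=\int_\Gamma\phi_\epsilon(\gamma)\,(L_\gamma)_\#T(\omega)\,d\mu(\gamma)\text{,}\qquad\omega\in\formsc{k}(\Gamma)\text{.}
\end{equation*}

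The heart of the argument, and the step I expect to be the main obstacle, is a \emph{continuity property of the action of $\Gamma$ on currents}: for a fixed $\omega=f\,dg^1\wedge\dotsb\wedge dg^k\in\formsc{k}(\Gamma)$, the map $\gamma\mapsto(L_\gamma)_\#T(\omega)=T\bigl((f\circ L_\gamma)\,d(g^1\circ L_\gamma)\wedge\dotsb\wedge d(g^k\circ L_\gamma)\bigr)$ should be continuous, with value $T(\omega)$ at $\gamma=\gunit$. To prove this I would observe that when $\gamma_j\to\gamma$, continuity of the group multiplication gives $h\circ L_{\gamma_j}\to h\circ L_\gamma$ pointwise for every continuous $h$, while $L(h\circ L_{\gamma_j})=L(h)$ (each $L_{\gamma_j}$ is an isometry) and $\supp(h\circ L_{\gamma_j})=\gamma_j^{-1}\supp(h)$ remains in a fixed compact set for $\gamma_j$ in a compact neighborhood of $\gamma$; hence $(f\circ L_{\gamma_j},\,g^i\circ L_{\gamma_j})\to(f\circ L_\gamma,\,g^i\circ L_\gamma)$ in $\formsc{k}(\Gamma)$, and the continuity axiom for $T$ applies. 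This in particular makes $\gamma\mapsto(L_\gamma)_\#T(\omega)$ Borel, and the mass inequality \eqref{masscriterion1meas}, together with the change of variables $\|(L_\gamma)_\#T\|=(L_\gamma)_\#\|T\|$, bounds it uniformly for $\gamma$ in the compact set $\supp\phi_\epsilon$; so the integral defining $T_\epsilon$ converges.

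Granting this, the remaining verifications are routine. Linearity of $T_\epsilon$ in each argument is immediate, and the locality axiom passes from each $(L_\gamma)_\#T$ to the integral. For the continuity axiom of $T_\epsilon$, if $\omega_j\to\omega$ in $\formsc{k}(\Gamma)$ then for each fixed $\gamma$ one has $\omega_j\circ L_\gamma\to\omega\circ L_\gamma$ in $\formsc{k}(\Gamma)$ (again because $L_\gamma$ is an isometry), hence $(L_\gamma)_\#T(\omega_j)\to(L_\gamma)_\#T(\omega)$, while \eqref{masscriterion1meas} furnishes a dominating constant uniform in $j$ and in $\gamma\in\supp\phi_\epsilon$, so dominated convergence yields $T_\epsilon(\omega_j)\to T_\epsilon(\omega)$. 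For the weak convergence \eqref{weakconv}, since $\phi_\epsilon\geq0$, $\int_\Gamma\phi_\epsilon\,d\mu=1$, and $\supp\phi_\epsilon\subseteq B_\epsilon(\gunit)$, we have $|T_\epsilon(\omega)-T(\omega)|\leq\sup_{\gamma\in B_\epsilon(\gunit)}\bigl|(L_\gamma)_\#T(\omega)-T(\omega)\bigr|$, which tends to $0$ as $\epsilon\to0$ by continuity of the action at $\gunit$.

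Finally, one must check that $T_\epsilon$ has locally finite mass, absolutely continuous with respect to $\mu$. Testing $T_\epsilon$ against $\omega\in\extformsc{k}(U)$ of comass at most $1$ and using $\|(L_\gamma)_\#T\|=(L_\gamma)_\#\|T\|$ gives $\|T_\epsilon\|(U)\leq\nu(U)$ for open $U$, where $\nu:=\int_\Gamma\phi_\epsilon(\gamma)\,(L_\gamma)_\#\|T\|\,d\mu(\gamma)$; since $\gamma$ ranges over the compact $\supp\phi_\epsilon$, $\nu$ is finite on compacts, hence a Radon measure, and outer regularity upgrades the inequality to $\|T_\epsilon\|\leq\nu$ on all Borel sets. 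Thus it suffices to see $\nu\ll\mu$. By Fubini, $\nu(B)=\int_\Gamma\bigl(\int_{Bx^{-1}}\phi_\epsilon\,d\mu\bigr)\,d\|T\|(x)$, and if $\mu(B)=0$ then $\mu(Bx^{-1})=0$ for every $x$, because precomposing $\mu$ with a right translation yields another left Haar measure, hence a scalar multiple of $\mu$; therefore $\nu(B)=0$. Modulo the continuity-of-the-action lemma, this last point — accommodating a possible failure of unimodularity of $\Gamma$ — is the only place where a little care is required.
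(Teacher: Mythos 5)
Your proposal is correct and follows essentially the same route as the paper: the same averaged push-forward $T_\epsilon(\omega)=\dashint_{B_\epsilon(\gunit)}(L_\gamma)_\#T(\omega)\,d\mu(\gamma)$, the same use of the isometry property of left translations together with the mass criterion and dominated convergence for the continuity axiom, the same Fubini argument combined with the fact that right translates of $\mu$-null sets are $\mu$-null for absolute continuity, and the same continuity-of-the-action-at-$\gunit$ argument (which the paper phrases as a proof by contradiction) for the weak convergence. Your explicit verification that $\gamma\mapsto(L_\gamma)_\#T(\omega)$ is continuous, hence Borel, is a detail the paper leaves implicit, but it does not change the argument.
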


\begin{proof}

For $\omega\in \formsc{k}(\Gamma)$, we define $$T_\epsilon(\omega) = \dashint_{B(\gunit,\epsilon)} (\gamma_\#T)(\omega)\,d\mu(\gamma)\text{.}$$

We must first check that for each $\epsilon>0$, $T_\epsilon$ is a current.  Fix $\epsilon$, and suppose the forms $\omega_i=f_i\,dg^1_i\wedge\dotsb\wedge dg^k_i$ converge to $\omega=f\,dg^1\wedge\dotsb\wedge dg^k$. 

Since the functions $f_i$ converge uniformly to $f$, and the translation maps are isometries, all of the functions $f_i\circ \gamma$ are uniformly bounded in absolute value, say 
\begin{equation}
\label{uniformabs}
|f_i\circ \gamma|\leq M\text{.}
\end{equation}
Similarly, the functions $g_i^j$ have locally uniformly bounded Lipschitz constants, and so there is some $N>0$ such that $L(g_i^j|_{N_{2\epsilon}(K)})<N$ for all $i$ and $j$, where $K = \bigcup \supp(f_i)$, and $N_{2\epsilon}(K)=\{\gamma\in\Gamma:\dist(\gamma,K)<2\epsilon\}$. 
It follows, again because the translation mappings are isometries, that for each $\gamma\in B_\epsilon(\gunit)$, and each $i$ and $j$,
\begin{equation}
\label{uniformlip}
L(g_i^j\circ\gamma|_{N_{\epsilon}(K)})<N\text{.}
\end{equation}

Inequalities \eqref{uniformabs} and \eqref{uniformlip}, as well as the mass criterion \eqref{masscriterion1}, imply that for all $i$ and $j$, and for $\gamma\in B_\epsilon(\gunit)$,
\begin{equation*}
\gamma_\#T(\omega_i)\leq MN^k||T||(N_{\epsilon}(K))\text{.}
\end{equation*}
Moreover, by the continuity axiom, for each $\gamma$, $T(\omega_i)$ converges to $T(\omega)$.  Thus $T_\epsilon(\omega_i)$ converges to $T_\epsilon(\omega)$ by the Lebesgue Dominated Convergence Theorem.

To prove that $||T_\epsilon|| \ll \mu$, it is enough to show that whenever $\mu(A) = 0$, $T_\epsilon\restrict{A} = 0$, since this implies $||T_\epsilon||\restrict{A} = ||T_\epsilon\restrict{A}||=0$.  To establish that $T_\epsilon\restrict{A} = 0$, we argue as follows: If $A\subset \Gamma$ with $\mu(A)=0$, we use Fubini's theorem and Definition \ref{massdef} to conclude that
\begin{align*}
&|T_\epsilon\restrict{A}(f\,dg^1\wedge \dotsb \wedge dg^k)| 
= \left|\dashint_{B(\gunit,\epsilon)} T\left((\charfcn_A f)\circ \gamma \,d(g^1\circ \gamma) \wedge \dotsb \wedge d(g^k\circ \gamma)\right)\,d\mu(\gamma)\right|\\
&\leq \dashint_{B(\gunit,\epsilon)} \left|T\left((\charfcn_A f)\circ \gamma \,d(g^1\circ \gamma) \wedge \dotsb \wedge d(g^k\circ \gamma)\right)\right|\,d\mu(\gamma)\\
&\leq N^k\sup_{|f|\leq 1} \left(\dashint_{B(\gunit,\epsilon)} \left(\int_\Gamma|(\charfcn_A f)(\gamma y)| \,d||T||(y)\right)\,d\mu(\gamma)\right)\\
&=N^k\sup_{|f|\leq 1} \left(\int_\Gamma\left(\dashint_{B(\gunit,\epsilon)} |(\charfcn_A f)(\gamma y)| \,d\mu(\gamma)\right)\,d||T||(y)\right)\\
&=N^k\sup_{|f|\leq 1} \left(\int_\Gamma\left(\dashint_{B(\gunit,\epsilon)} |(\charfcn_{Ay}(\gamma) f(\gamma y)| \,d\mu(\gamma)\right)\,d||T||(y)\right)
=0\text{.}
\end{align*}

Note that the second to last line vanishes because right translations map null sets to null sets.  This follows from the fact that left and right Haar measure are in the same measure class, and thus have the same null sets, so that $\mu(A_y)=0$ for all $y\in \Gamma$.

It now remains only to check that (\ref{weakconv}) holds for every $\omega\in\formsc{k}(\Gamma)$.

We argue by contradiction. Suppose $T_\epsilon$ does not converge weakly to $T$.  Then for some $\omega = f \,dg^1\wedge\dotsb\wedge dg^k \in \formsc{k}(\Gamma)$, $\delta>0$, and some sequence $\{\epsilon_i\}$ with $\epsilon_i\rightarrow 0$ as $i\rightarrow \infty$, we have 
$$|(T_{\epsilon_i}-T)(\omega)|\geq\delta\text{.}$$
For each $i$, we therefore have some $\gamma_i\in B(\gunit,\epsilon_i)$ such that 
\begin{align*}
\delta &\leq  |(\gamma_{i\#}T-T)(\omega)|\\
&\leq |T(f\circ \gamma_i\,d(g^1\circ \gamma_i)\wedge\dotsb\wedge d(g^k\circ \gamma_i))-T(f\,dg^1\wedge\dotsb\wedge dg^k)|\text{.}
\end{align*}

On the other hand, $\gamma_i\rightarrow \gunit$, from which it follows that $f\circ \gamma_i$ converges to $f$ in $\lipc(\Gamma)$, and $g^j\circ \gamma_i$ converges to  $g^j$ in $\liploc(\Gamma)$ for each $j$.  
This contradicts the continuity of $T$.
\end{proof}

Proposition \ref{approximationlemma}, in combination with Corollary \ref{fewcurrents} and the alternating property, immediately yields the following result.
\begin{corollary}
\label{nogroupcurrents}
Let $\Gamma$ be a metric group with a differentiable structure of dimension $n$.  Then $\Gamma$ admits no nonzero $k$-currents for $k>n$.
\end{corollary}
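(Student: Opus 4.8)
The plan is to combine the approximation result of Proposition \ref{approximationlemma}, which holds in any metric group, with the dimension bound of Corollary \ref{fewcurrents}, which holds chart by chart. First I would fix $T\in\currentsloc{k}(\Gamma)$ with $k>n$; to show $T=0$ it suffices to show $T(\omega)=0$ for every $\omega\in\formsc{k}(\Gamma)$. By Proposition \ref{approximationlemma} there are currents $T_\epsilon\in\currentsloc{k}(\Gamma)$ with $\|T_\epsilon\|\ll\mu$ and $T_\epsilon(\omega)\to T(\omega)$ for every such $\omega$. Hence it is enough to prove the corollary under the extra hypothesis $\|T\|\ll\mu$; the role of Proposition \ref{approximationlemma} is precisely to buy us this absolute continuity, which is not automatic for a general current.

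Now write $\Gamma=\bigcup_i Y_i$, where each $(Y_i,\pi_i)$ is a coordinate patch of the differentiable structure with $\dim(\BBE_i)\le n$ (this is what ``dimension $n$'' means). Corollary \ref{fewcurrents} gives, for each $i$, a set $Y_{i,0}\subseteq Y_i$ with $\mu(Y_i\setminus Y_{i,0})=0$ such that every $k$-current concentrated on $Y_{i,0}$ with $k>\dim(\BBE_i)$ vanishes. Put $\Gamma_0=\bigcup_i Y_{i,0}$; since $\Gamma\setminus\Gamma_0\subseteq\bigcup_i(Y_i\setminus Y_{i,0})$, the set $\Gamma\setminus\Gamma_0$ is $\mu$-null, so by absolute continuity $\|T\|(\Gamma\setminus\Gamma_0)=0$, i.e.\ $T$ is concentrated on $\Gamma_0$. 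Disjointifying, set $A_i=Y_{i,0}\setminus\bigcup_{j<i}Y_{j,0}$, so that $\Gamma_0=\bigsqcup_i A_i$ with each $A_i$ a Borel subset of $Y_{i,0}$. Using the canonical extension of $T$ to bounded Borel coefficients (equation \eqref{finitemassextension}) together with the $L^1(\|T\|)$-convergence $\sum_{i\le N}\charfcn_{A_i}\to\charfcn_{\Gamma_0}$ and dominated convergence, one obtains, evaluated on any $\omega=f\,dg^1\wedge\dotsb\wedge dg^k$, the identity $T(\omega)=T\restrict{\Gamma_0}(\omega)=\sum_i T\restrict{A_i}(\omega)$. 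But each $T\restrict{A_i}\in\currentsloc{k}(\Gamma)$ is concentrated on $A_i\subseteq Y_{i,0}$ with $k>\dim(\BBE_i)$, so Corollary \ref{fewcurrents} forces $T\restrict{A_i}=0$, and therefore $T=0$.

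The computation is essentially routine once this outline is in place. The only step that needs a little care is the countable decomposition $T=\sum_i T\restrict{A_i}$: one must verify that the series converges on each form and sums to $T\restrict{\Gamma_0}$, which is where the finiteness of $\|T\|$ on compacta and the extension property \eqref{finitemassextension} are used, and this is also the point at which absolute continuity of $\|T\|$ (and hence the preliminary reduction via Proposition \ref{approximationlemma}) is genuinely needed, since a current with mass singular to $\mu$ need not be concentrated on $\Gamma_0$ at all. The alternating property of currents enters only implicitly, through the proof of Corollary \ref{fewcurrents}.
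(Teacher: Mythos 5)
Your proposal is correct and follows essentially the same route as the paper, which simply cites Proposition \ref{approximationlemma} and Corollary \ref{fewcurrents} and leaves the reduction to absolutely continuous mass, the passage to the full-measure sets $Y_{i,0}$, and the countable decomposition implicit. You have merely filled in those routine details (the disjointification and the $L^1(\|T\|)$-convergence argument), and they check out.
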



\section{Carnot groups.}
\label{carnot}
We recall some definitions and facts about stratified Lie groups, also known as Carnot groups, equipped with their Carnot-Carath\'eodory metrics.  All of this material is surveyed in \cite{HeinonenCarnot}.  A much more in-depth study of Carnot-Carath\'eodory spaces can be found in \cite{Gromov}, and of Carnot groups specifically, in \cite{Pansu}.
\begin{definition}
A \textbf{Carnot group} is a connected, simply connected Lie group $\BBG=(\BBG,\gtimes)$, with unit element $\gunit$ and left Haar measure $\mu$, whose Lie algebra $\mathfrak g = T_\gunit\BBG$, with bracket $[\cdot,\cdot]$, admits a stratification, i.e., a direct sum decomposition
\begin{equation*}
\mathfrak g = V_1 \oplus \dotsb \oplus V_m
\end{equation*}
such that $[V_1,V_j]= V_{j+1}$ for $j<m$, and $[\mathfrak g,V_m]= 0$.
\end{definition}
We call $\BBG$ a Carnot group of step $m$.

For $p\in\BBG$, let $\tau_p$ denote the left-translation map $q\mapsto p\gtimes q$. 
Throughout this chapter, we will take the point of view that $k$-vector fields and $k$-forms, respectively, are maps from $\BBG$ into $\bigwedge^k\mathfrak g$ and $\bigwedge^k \mathfrak g^*$. 
Notice that this agrees with the usual notion by way of the canonical identification between $T_p$ and $\mathfrak g = T_{\gunit}$ given by the translation map $\tau_{p*}$.

We assume $\mathfrak g$ is equipped with an inner product $\langle\cdot,\cdot\rangle$, so that $\BBG$ has a left-invariant Riemannian structure.  We denote by $\rdist(\cdot,\cdot)$ the metric induced by this structure.  

We refer to $H= V_1$ as the \textit{horizontal subspace}. 
The vector bundle $\mathcal H = \bigcup_{p\in \BBG} \tau_{p*} H$ is called the \textit{horizontal bundle}.
A piecewise smooth path $\gamma\colon I\rightarrow \BBG$ is said to be \textbf{horizontal} if $\frac{d\gamma}{dt}\in \mathcal H$ for all but finitely many $t\in I$.

\begin{definition}
The Carnot-Carath\'eodory distance between two points $p,q\in \BBG$ is
\begin{equation*}
\cdist(p,q) = \inf \{l(\gamma):\text{$\gamma$ is a horizontal path joining $p$ and $q$.}\}
\end{equation*}
\end{definition}
It is a deep result of Chow \cite{Chow} and Rashevsky \cite{Rashevsky} that the Carnot-Carath\'eodory distance is in fact finite, and therefore a metric on $\BBG$.

If $v\in \mathfrak g$, we denote by $X^v$ the unique left invariant vector field on $\BBG$ satisfying $X^v_\gunit=v$.  

Finally, if $f\colon \BBG\rightarrow \BBR$ is differentiable (in the usual sense, as opposed to the Pansu-differentiability described below) at $p\in\BBG$, we write $\driem f_p\colon \mathfrak g\rightarrow \BBR$ for the differential of $f$, as the symbol $df$ has already been expropriated for metric currents.  The ``$r$'' is to emphasize that this differential is the one that should exist almost everywhere (by Rademacher's theorem) for functions that are Lipschitz in the \textit{Riemannian} metric on $\BBG$.  A theorem of Pansu (Theorem \ref{pansu} below) provides an analogous differential, $\dpansu$, for Lipschitz functions in the Carnot-Carath\'eodory metric.



A Carnot group's Lie algebra $\mathfrak g$ is equipped with a one-parameter family of linear dilations $\delta_r\colon \mathfrak g \rightarrow \mathfrak g$ given by $\delta_r(v_j)=r^j v_j$ for $v_j \in V_j$.  The maps $\delta_r$ are Lie algebra homomorphisms, and so induce Lie group homomorphisms $\Delta_r\colon\BBG \rightarrow \BBG$ via the exponential map, such that the $\Delta_{r*}(\gunit)= \delta_r$.  
Notice that since $\Delta_r$ is a homomorphism, we have $\Delta_r\circ \tau_p = \tau_{\Delta(p)} \circ \Delta_r$ for every $p\in \BBG$.  It follows that for every $u\in H$, $p\in \BBG$, 
and $r>0$, we have
\begin{equation}
\label{dilation}
\Delta_{r*} X^u_p 
= \Delta_{r*} \tau_{p*} u 
= \tau_{\Delta_r(p)*}\Delta_{r*} u 
= r \tau_{\Delta_r(p)*} u 
= r X^u_{\Delta_r(p)}\text{.}
\end{equation}

Thus the dilation $\Delta_r$ rescales the metric $\cdist$ by a factor of $r$, as the name implies.

The number $Q = \sum_{i=1}^{m} i \dim(V_i)$ is called the \textit{homogeneous dimension} of $\BBG$.  As motivation, we note that the dilations $\Delta_r$ have Jacobian $r^Q$.  
A Carnot $\BBG$ with homogeneous dimension $Q$ has Hausdorff dimension $Q$ as well, and is in fact Ahlfors $Q$-regular \cite{HeinonenCarnot}.  Since the metric $\cdist$ is invariant under left translations, and the Hausdorff $Q$-measure $\mathcal H^Q$ is positive and finite on balls, we adopt the convention that the Haar measure $\mu=\mathcal H^Q$. Note that this implies
\begin{equation}
\label{pushforwardcarnot}
\Delta_{r\#}\mu = r^{-Q}\mu
\end{equation}
for each $r>0$. For a noncommutative Carnot group (i.e., one of step $m>1$), $Q$ always exceeds the topological dimension, and so such groups give us a rich supply of fractal spaces to study.

Lastly, we note that Carnot groups, being nilpotent, are unimodular \cite{ReiterStegeman}.

\begin{example}
\label{heisex}
The $n^{\text{th}}$ Heisenberg group $\heis{n}$ is a $(2n+1)$-dimensional Lie group whose Lie algebra is spanned by vector fields $X_i$, $Y_i$ and $Z$, for $i=1, \dotsc, n$, satisfying the relations 
$$[X_i,Y_i] = Z$$
with all other generators commuting.  
The group $\heis{n}$ is a step-$2$ Carnot group with stratification $\spa(X_1,Y_1,\dotsc,X_n,Y_n) \oplus \spa(Z)$.  The homogeneous dimension $Q$ is $2n+2$, one more than the topological dimension.
\end{example}

\subsection*{Density of smooth functions}
The following lemma will allow us to employ the smooth structure of a Carnot group $\BBG$ in our analysis of $\currents{k}(\BBG)$.  

\begin{lemma}
\label{smoothdense}
The space $\smoothc(\BBG)$ of smooth functions on $\BBG$ with compact support is dense in $\lipc(\BBG)$.  Similarly, $\smooth(\BBG)$ is a dense subset of $\liploc(\BBG)$.
\end{lemma}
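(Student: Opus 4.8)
The plan is to prove density on compact sets first and then localize. We want to show that any $f \in \lipc(\BBG)$ (resp.\ $f \in \liploc(\BBG)$) is a limit, in the sense of convergence defined in Section \ref{preliminaries}, of smooth functions. Recall that this means pointwise convergence together with uniformly bounded Lipschitz constants (and uniformly compact supports, in the compactly supported case). The natural tool is mollification using the group structure: since $\BBG$ is a Lie group with Haar measure $\mu$, we can convolve $f$ with a smooth approximate identity. Concretely, fix a nonnegative $\rho \in \smoothc(\BBG)$ supported in the Riemannian ball $B_{\rdist}(\gunit,1)$ with $\int_\BBG \rho \, d\mu = 1$, set $\rho_\epsilon(\gamma) = \epsilon^{-Q}\rho(\Delta_{1/\epsilon}(\gamma))$ (using \eqref{pushforwardcarnot}, this keeps mass one and shrinks support), and define
\begin{equation*}
f_\epsilon(p) = \int_\BBG f(p \gtimes \gamma^{-1}) \rho_\epsilon(\gamma) \, d\mu(\gamma) = \int_\BBG f(q)\, \rho_\epsilon(q^{-1} \gtimes p)\, d\mu(q)\text{.}
\end{equation*}

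First I would verify smoothness of $f_\epsilon$: in the second integral representation the dependence on $p$ sits entirely inside the smooth function $\rho_\epsilon$, and since $f$ is bounded with support in some fixed compact set, differentiation under the integral sign is justified, so $f_\epsilon \in \smooth(\BBG)$. Next, control of the Lipschitz constant: writing $f_\epsilon(p) = \int_\BBG f(p \gtimes \gamma^{-1})\rho_\epsilon(\gamma)\,d\mu(\gamma)$, for any $p_1, p_2$ we have $|f_\epsilon(p_1) - f_\epsilon(p_2)| \le \int_\BBG |f(p_1\gtimes\gamma^{-1}) - f(p_2\gtimes\gamma^{-1})|\rho_\epsilon(\gamma)\,d\mu(\gamma)$. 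In the \emph{Riemannian} metric, left translation is an isometry, so $\rdist(p_1\gtimes\gamma^{-1}, p_2\gtimes\gamma^{-1}) = \rdist(p_1,p_2)$ and hence $L(f_\epsilon) \le L(f)$ with respect to $\rdist$; since $\rdist$ and $\cdist$ induce the same topology and, on compact sets, the Lipschitz classes $\lipc$ and $\liploc$ agree up to the (locally bounded) comparison of the two metrics, uniform $\rdist$-Lipschitz bounds give uniform $\cdist$-Lipschitz bounds on compacta. Then pointwise (indeed locally uniform) convergence $f_\epsilon \to f$ follows from continuity of $f$ and $\int \rho_\epsilon = 1$ with shrinking support: $|f_\epsilon(p) - f(p)| \le \int_\BBG |f(p\gtimes\gamma^{-1}) - f(p)|\rho_\epsilon(\gamma)\,d\mu(\gamma) \le \sup_{\gamma \in \supp \rho_\epsilon} |f(p\gtimes\gamma^{-1}) - f(p)| \to 0$.

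For the compactly supported case, the support of $f_\epsilon$ is contained in $\supp(f) \gtimes \supp(\rho_\epsilon)$, which lies in a fixed compact neighborhood of $\supp(f)$ for all $\epsilon \le 1$, so the supports are uniformly compact; thus $f_\epsilon \in \smoothc(\BBG)$ and $f_\epsilon \to f$ in $\lipc(\BBG)$. For the locally Lipschitz case there is no support condition to worry about, and the local uniform Lipschitz bound and pointwise convergence established above are exactly what is needed for convergence in $\liploc(\BBG)$; one should just note that $f_\epsilon$, while smooth, may not have compact support, but that is permitted in $\smooth(\BBG) \subset \liploc(\BBG)$.

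I expect the only real subtlety — the main obstacle — to be the bookkeeping comparing the Riemannian and Carnot-Carath\'eodory metrics: mollification via group convolution naturally controls the \emph{Riemannian} Lipschitz constant (because left translations are Riemannian isometries and the mollifier is built from the dilations $\Delta_\epsilon$), whereas the ambient metric here is $\cdist$. The resolution is that $\rdist \le \cdist$ always and, conversely, on any compact set $\cdist \le C\,\rdist^{1/m}$ for a step-$m$ group, so the two metrics are topologically equivalent and, more importantly, a family with uniformly bounded $\rdist$-Lipschitz constants on a compact set $K$ is equi-continuous there, hence — combined with pointwise convergence to the $\cdist$-Lipschitz limit $f$ — still a legitimate approximating sequence in the sense of Section \ref{preliminaries}. (Alternatively, and more cleanly, one can observe that horizontal left-invariant vector fields are themselves left-invariant, so $\cdist$-Lipschitz bounds are also preserved by left translation, and run the same estimate directly with $\cdist$; I would mention this as the slicker route.) Everything else is the standard mollification argument, so I would keep that part terse.
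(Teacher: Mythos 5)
Your overall strategy (mollify via the group structure, using dilation-rescaled bump functions) is the same as the paper's, but the key estimate is broken because your convolution acts on the wrong side. In $f_\epsilon(p)=\int_\BBG f(p\gtimes\gamma^{-1})\rho_\epsilon(\gamma)\,d\mu(\gamma)$ the map $p\mapsto p\gtimes\gamma^{-1}$ is a \emph{right} translation, not a left translation. Both $\cdist$ and the Riemannian metric of the paper are only \emph{left}-invariant, and for a non-abelian Carnot group right translations are not isometries of either one; indeed they do not even preserve the horizontal bundle. So the identity $\rdist(p_1\gtimes\gamma^{-1},p_2\gtimes\gamma^{-1})=\rdist(p_1,p_2)$ you invoke is false, and the analogous identity for $\cdist$ fails badly: in $\heis{1}$, conjugation by a fixed $\gamma$ sends a horizontal element $(x,y,0)$ to $(x,y,ay-bx)$, so for nearby $p_1,p_2$ one only gets $\cdist(p_1\gtimes\gamma^{-1},p_2\gtimes\gamma^{-1})\lesssim \cdist(p_1,p_2)^{1/2}$, which does not yield a uniform Lipschitz bound on $f_\epsilon$ by your argument. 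There is a second, independent problem with the Riemannian detour: a function in $\lipc(\BBG,\cdist)$ need not be Lipschitz for $\rdist$ at all, since the inequality $\rdist\leq\cdist$ goes the wrong way for that implication (it is the reason the \emph{final} conversion, from an $\rdist$-Lipschitz bound back to a $\cdist$-Lipschitz bound, is harmless --- but you never legitimately arrive at an $\rdist$-Lipschitz bound to convert). The musings about $\cdist\leq C\rdist^{1/m}$ and equicontinuity do not repair this: the convergence notion of Section \ref{preliminaries} requires uniformly bounded $\cdist$-Lipschitz constants, and equicontinuity is strictly weaker.

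The fix is exactly the ``slicker route'' you mention in passing, but it requires flipping the convolution: set $f_\epsilon(p)=\int_\BBG f(\gamma^{-1}\gtimes p)\,\rho_\epsilon(\gamma)\,d\mu(\gamma)$, so that the translate acting on $f$ is a \emph{left} translation. Then left-invariance of $\cdist$ gives $|f(\gamma^{-1}p_1)-f(\gamma^{-1}p_2)|\leq L(f)\,\cdist(\gamma^{-1}p_1,\gamma^{-1}p_2)=L(f)\,\cdist(p_1,p_2)$, hence $L(f_\epsilon)\leq L(f)$ directly in the Carnot--Carath\'eodory metric, with no comparison of metrics needed; smoothness still follows by rewriting the integral so that the $p$-dependence sits inside $\rho_\epsilon$, and the pointwise convergence and uniform-support arguments go through as you wrote them. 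This is precisely the paper's proof. (One can in fact salvage the right-sided convolution by differentiating onto the mollifier and exploiting $\int X^u\rho_\epsilon\,d\mu=0$ together with $|f(p\gamma^{-1})-f(p)|\leq L(f)\cdist(\gunit,\gamma)$, but that is a genuinely different argument, yields a worse constant, and is not what you did.)
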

\begin{proof}
The proof is a standard smoothing argument, and is essentially the same as the argument given for the case $\BBG=\BBR^n$ in \cite[Section 1.5]{Lang}.

Let $f\in\lipc(\BBG)$, with Lipschitz constant $L$.
Let $\phi\colon \BBG\rightarrow [0,\infty)$ be a smooth function supported on $B_1(\gunit)$ such that $\int_\BBG \phi\, d\mu= 1$. 
For every $\epsilon>0$, define $\phi_\epsilon(p)= \epsilon^{-Q}\phi\circ\Delta_{\epsilon}$.
Note that $\phi_\epsilon$ is supported on $B_\epsilon(\gunit)$, and that $\int_\BBG \phi_\epsilon = 1$.
We then define smooth functions $f_\epsilon\colon \BBG\rightarrow \BBR$ by
\begin{equation*}
f_\epsilon(p)=\int_\BBG f(q^{-1}p)\phi_\epsilon(q) d\mu(q) = \int_\BBG f(z)\phi_\epsilon(pz) d\mu(z)\text{.}
\end{equation*}
Then at every $p\in\BBG$, and for every $\epsilon>0$,
\begin{equation*}
|f_\epsilon(p) - f(p)|\leq \int_\BBG |f(z)\phi_\epsilon(pz) -f(p)| d\mu(z)\text{.}
\end{equation*}
By continuity of $\phi$, the right hand side converges to $0$ with $\epsilon$, so that $f_\epsilon$ converges pointwise to $f$.
Moreover, for every $p_1,p_2\in \BBG$, we have
\begin{align*}
&|f_\epsilon(p_1)-f_\epsilon(p_2)| = \left|\int_\BBG \left(f(q^{-1}p_1)-f(q^{-1}p_2)\right)\phi_\epsilon(q) d\mu(q)\right|\\
&\leq \int_\BBG \left|f(q^{-1}p_1)-f(q^{-1}p_2)\right|\phi_\epsilon(q) d\mu(q)
\leq \int_\BBG \left|L\cdist(q^{-1}p_1,q^{-1}p_2)\right| \phi_\epsilon(q) d\mu(q)\\
&\leq \int_\BBG \left|L\cdist(p_1,p_2)\right| \phi_\epsilon(q) d\mu(q)
= L \cdist(p_1,p_2)\text{.}
\end{align*}
Thus the functions $f_\epsilon$ have uniformly bounded Lipschitz constant.  Moreover, for $\epsilon < 1$, they are supported on the relatively compact neighborhood $\mathcal N_1(\supp(f))=\{p\in\BBG:\cdist(p,\supp(f))<1\}$.  Therefore $f_\epsilon$ converges in $\lipc(\BBG)$ to $f$.

The same argument shows the density of $\smooth(\BBG)$ in $\liploc(\BBG)$.  The only different part of the argument is to show that the functions are locally uniformly Lipschitz.  To see this, note that for any compact set $K\subset \BBG$, if $f|_{\mathcal N_1(K)}$ is $L$-Lipschitz, then $f_\epsilon|_K$ is $L$-Lipschitz for $\epsilon<1$, and so the Lipschitz constants of $f_\epsilon|_K$ are uniformly bounded for each $K$. 
\end{proof}

\subsection*{Differentiable structure}
According to a result of Jerison \cite[Theorem 2.1]{Jerison}, a Carnot group admits a Poincar\'e inequality, and thus by Cheeger's differentiation theorem, also admits a differentiable structure.  In  fact, the structure can be described by differentiating in the horizontal directions, as stated precisely in Theorems \ref{pansu} and \ref{pansucheeger} below, due to Pansu and Cheeger-Weaver, respectively.

Before we state the theorem, a number of remarks are in order.  First, the Lie subalgebra $\mathfrak v = 0\oplus V_2 \oplus\dotsb\oplus V_m$ is an ideal, and so the corresponding Lie subgroup $\BBV \subset \BBG$ is normal \cite{Hall}. Moreover, we can identify $H$ with $\mathfrak g/\mathfrak v$ by way of the quotient map $\pi_*\colon  \mathfrak g\rightarrow \mathfrak g/\mathfrak v$  (here $\pi\colon \BBG \rightarrow \BBG/\BBV$ is the quotient map between Lie groups).  By way of this identification, we equip $\mathfrak g/\mathfrak v$ with the inner product from $H$, and notice that with respect to this inner product, the map $\pi_*$ is $1$- Lipschitz. It follows that the map $\pi$ is Lipschitz with respect to the Carnot Carath\'eodory metric on $\BBG$ and the Riemannian metric on $\BBG/\BBV$ (which is just a Euclidean metric).  In the future, we will denote by $\BBH$ the group $\BBG/\BBV$, equipped with the aforementioned metric. We will also denote by $\mathfrak h = \mathfrak g/\mathfrak v$ the Lie algebra of the group $\BBH$.


The following generalization of Rademacher's differentiation theorem was proved by Pansu \cite{Pansu}.
\begin{theorem}[{\cite[Theorem 2]{Pansu}}]
\label{pansu}
Let $f\colon \BBG_1\rightarrow \BBG_2$ be a Lipschitz mapping between two Carnot groups.  For every $p\in \BBG$ and $t>0$, define $f^t_p\colon \BBG_1\rightarrow \BBG_2$ to be the rescaling 
\begin{equation}
\label{rescalings}
f^t_p(q)  = \Delta_t^{-1}(f(p)^{-1}\cdot f(p\cdot\Delta_t(q)))\text{.}
\end{equation}
Then at almost every $p\in \BBG_1$, there is a Lie group homomorphism $\Dpansu f_p\colon \BBG_1\rightarrow \BBG_2$, commuting with each dilation map $\Delta_t$, 
given by 
\begin{equation}
\label{dpansudef}
\Dpansu f_p(q) = \lim_{t\rightarrow 0} f^t_p(q)
\end{equation}
\end{theorem}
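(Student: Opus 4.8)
Since Theorem \ref{pansu} is Pansu's differentiation theorem itself, the plan is to reconstruct the standard proof, which separates into a soft compactness step, an analytic core handling real-valued Lipschitz functions on $\BBG_1$, and an induction on the step of the target $\BBG_2$ reducing the general case to the real-valued one.

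\emph{Step 1 (compactness).} First I would check that the rescalings $f^t_p$ are uniformly Lipschitz: since $f^t_p = \Delta_t^{-1}\circ\tau_{f(p)^{-1}}\circ f\circ\tau_p\circ\Delta_t$, the translations $\tau_p,\tau_{f(p)^{-1}}$ are $\cdist$-isometries and $\Delta_t$ rescales $\cdist$ by the factor $t$ (cf.\ \eqref{dilation}), so $L(f^t_p)=L(f)$ for every $t$ and $p$. Since also $f^t_p(\gunit)=\gunit$, the family $\{f^t_p\}_{t>0}$ is locally uniformly bounded, so by the Arzela--Ascoli theorem every sequence $t_k\to 0$ has a subsequence along which $f^{t_k}_p\to g$ locally uniformly for some Lipschitz $g$ with $g(\gunit)=\gunit$. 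From the identity $\Delta_s^{-1}\circ f^t_p\circ\Delta_s=f^{st}_p$ one sees that once the full limit $\Dpansu f_p=\lim_{t\to 0}f^t_p$ is known to exist it automatically commutes with the dilations. So the real content is (a) uniqueness of the subsequential limit and (b) that it is a group homomorphism, both of which I would extract from an explicit construction of the limit rather than from the subsequences.

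\emph{Step 2 (real-valued target).} Take $\BBG_2=\BBR$. Fixing a basis $u_1,\dotsc,u_\ell$ of $H=V_1$, I would note that for each $i$ the curve $s\mapsto f(p\gtimes\exp(su_i))$ is a classical Lipschitz function of $s$, hence differentiable a.e., and that foliating $\BBG_1$ by the smooth integral curves of the left-invariant field $X^{u_i}$ and applying Fubini produces a measurable horizontal derivative $p\mapsto X^{u_i}f(p)$ defined for $\mu$-a.e.\ $p$. I would then let $\Dpansu f_p\colon\BBG_1\to\BBR$ be the unique dilation-homogeneous homomorphism whose restriction to $H$ is the linear map $u_i\mapsto X^{u_i}f(p)$ (it must annihilate $V_2\oplus\dotsb\oplus V_m$ since $\BBR$ is abelian). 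The remaining point, $f^t_p\to\Dpansu f_p$ locally uniformly for $\mu$-a.e.\ $p$, I would prove by writing an arbitrary $q\in\BBG_1$ as a finite product $\exp(w_1)\gtimes\dotsb\gtimes\exp(w_M)$ of horizontal elements (possible by the Chow--Rashevsky theorem) and telescoping $f(p\gtimes\Delta_t(q))-f(p)$ over the partial products; since $\Delta_t(\exp w)=\exp(tw)$ for horizontal $w$, each increment is the integral of a horizontal derivative of $f$ along a horizontal segment staying within distance $O(t)$ of $p$, so dividing by $t$ and letting $t\to 0$ gives $\Dpansu f_p(q)$ pointwise, and equi-Lipschitzness upgrades this to local uniform convergence. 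I expect the main obstacle to be showing that $\mu$-a.e.\ $p$ is a point where the horizontal gradient is ``approximately continuous'' along horizontal tubes in an averaged $L^1$ sense, with bounds uniform over $q$ in compact sets; this is the genuinely quantitative part, and I would handle it with a Lebesgue-differentiation / Hardy--Littlewood maximal function argument using that $(\BBG,\cdist,\mu)$ is Ahlfors $Q$-regular and hence doubling.

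\emph{Step 3 (general target, by induction on the step of $\BBG_2$).} The base case is Step 2 applied componentwise. For step $m_2>1$, the top layer $N=W_{m_2}\subset\mathfrak g_2$ is central, $\overline{\BBG}_2=\BBG_2/N$ is Carnot of step $m_2-1$, and the quotient map is $\cdist$-Lipschitz, so $\overline f\colon\BBG_1\to\overline{\BBG}_2$ is Lipschitz and, by induction, Pansu-differentiable $\mu$-a.e.\ with $\Dpansu\overline f_p$ a dilation-homogeneous homomorphism. Expressing $f$ in exponential coordinates of the second kind adapted to the stratification of $\mathfrak g_2$, I would invoke the Lipschitz bound for $\cdist$ on $\BBG_2$, which controls the $N$-component of $f$ by an anisotropic estimate of weight $m_2$, together with a.e.\ differentiability of $\overline f$, to show that the $\Delta_t^{-1}$-renormalized $N$-component of $f^t_p$ converges as well. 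The limit is then forced to equal the unique dilation-homogeneous homomorphism $\BBG_1\to\BBG_2$ extending $\Dpansu\overline f_p$ (uniqueness because such a homomorphism is determined by its differential, which, commuting with the dilations, carries $V_j$ into $W_j$ and hence is determined by its restriction to the generating layer $H$). Since every subsequential limit from Step 1 must then agree with this homomorphism, the full limit $\Dpansu f_p$ exists and has all the asserted properties.
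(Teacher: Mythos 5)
First, a point of order: the paper does not prove this statement --- it is Pansu's differentiation theorem, quoted from \cite{Pansu}, and the paper only ever applies it with $\BBG_2=\BBR$ (via the induced functional $\dpansu f_p$). So there is no internal proof to compare yours against; what follows measures your reconstruction against the standard arguments in the literature.

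Your Steps 1 and 2 are a faithful outline of the known proof for real-valued targets: equi-Lipschitzness of the rescalings, existence of horizontal derivatives a.e.\ via Fubini along integral curves of left-invariant horizontal fields, and the telescoping of $f(p\gtimes\Delta_t(q))-f(p)$ over a horizontal factorization of $q$, controlled by an approximate-continuity or maximal-function estimate. You correctly flag that last estimate as the genuinely quantitative core; you do not carry it out, but the plan is the right one. The genuine gap is in Step 3. The Lipschitz bound $\cdist(f(p),f(p\gtimes\Delta_t(q)))\leq L\,t\,\cdist(\gunit,q)$ together with the ball-box estimate shows only that the top-stratum exponential coordinate of $f(p)^{-1}\gtimes f(p\gtimes\Delta_t(q))$ is $O(t^{m_2})$, so after applying $\Delta_t^{-1}$ that coordinate is merely \emph{bounded}, not convergent; boundedness plus a.e.\ differentiability of the quotient map $\overline f$ does not force convergence of the renormalized top-layer component. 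Relatedly, your uniqueness argument (``the limit is forced to equal the unique homogeneous homomorphism extending $\Dpansu\overline f_p$'') presupposes that every subsequential limit from Step 1 is already a homogeneous homomorphism, which Step 1 does not provide --- it yields only a pointed Lipschitz map fixing $\gunit$. Establishing the homomorphism property of the blow-up (equivalently, additivity on products of horizontal exponentials) for group-valued targets is precisely the delicate heart of Pansu's proof, and it cannot be bypassed by the quotient induction as written. Since the present paper needs only $\BBG_2=\BBR$, your Step 2 --- once the flagged estimate is supplied --- would suffice for all of its applications, but as a proof of the theorem as stated the argument is incomplete.
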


We call $\Dpansu f_p$ the \textit{Pansu differential} at $p$.  When it is defined, we say $f$ is \textit{Pansu-differentiable} at $p$.  Notice that 
each of the maps $f^t_p$ are Lipschitz with the same Lipschitz constant $L(f)$, so that if it exists, $\Dpansu f_p$ is $L(f)$-Lipschitz as well. We also define $\dpansu f_p$ to be the induced Lie algebra homomorphism $\dpansu f_p = \left(\Dpansu f_{p}\right)_*$.

We are interested in the case where $\BBG_2=\BBR$.  In this case, since $\BBR$ is Abelian, the map $\Dpansu f_p$ vanishes on $\BBV$, since the latter group is the commutator of $\BBG$, as follows from the stratification of $\mathfrak g$.  Therefore there is an induced homomorphism $\Dpansuh f_p\colon \BBH \rightarrow \BBR$ such that $\Dpansuh f_p\circ \pi = \Dpansu f_p$.  Note then that $\dpansuh f_{p}\circ \pi_* = \dpansu f_{p}\colon \mathfrak g\rightarrow \BBR$.  Also, since $\dpansu f_{p}$ is an element of $\mathfrak g^*$, we will write $\dpansu f_{p}(u) = \langle \dpansu f_{p},u\rangle$. 

The stratification of $\BBG$ indicates that $\exp(tu)=\exp(\delta_t u) = \Delta_t (\exp(u))$ for every $u\in H$.
It follows that at every point $p\in \BBG$ of Pansu differentiability, and for every $u\in H$, the partial derivatives $X^u_p(f)$ exist, and we have
\begin{align}
X^u_p(f)
= \frac{d}{dt}|_{t=0} f(p\cdot \exp(tu))
= \lim_{t\rightarrow 0} \frac{f(p\cdot \exp(tu))-f(p)}{t}\label{partialpansu}\\
= \lim_{t\rightarrow 0} \frac{f(p\cdot \Delta_t\exp(u))-f(p)}{t}
= \Dpansu f_p(\exp(u))
= \langle \dpansu f_{p},u\rangle\text{.}\notag
\end{align}
Moreover, if $f\colon \BBG\rightarrow \BBR$ is differentiable (in the usual sense), then for any $q\in \BBG$, the map $t\mapsto f(p\cdot\Delta_t(q))$ is differentiable at $t=0$, from which it follows that $f$ is Pansu differentiable at $p$.  From equation \eqref{partialpansu}, then, we have that 
\begin{equation}
\label{pansusmooth}
\langle \dpansu f_p,u\rangle = X^u_p(f) = \langle u, \driem f_p\rangle\text{.}
\end{equation}

Note that the Pansu differential is compatible with dilations in the following sense.  If $f\colon \BBG \rightarrow \BBR$ is a Lipschitz function, and $r>0$, then 
\begin{align*}
&\Dpansu (f\circ \Delta_r)_p (q) 
=\lim_{t\rightarrow 0} (f\circ \Delta_r)^t_p(q)
=\lim_{t\rightarrow 0} \frac{f(\Delta_r(p \Delta_t (q)))-f(\Delta_r(p))}{t}\\
&= \lim_{t\rightarrow 0} r\cdot \frac{f(\Delta_r(p) \Delta_{rt}(q))-f(\Delta_r(p))}{rt}
=r \Dpansu f_{\Delta_r(p)}(q)\text{.}
\end{align*}
Differentiating, we obtain
\begin{equation}
\label{pansudilation}
\dpansu (f\circ\Delta_r)_p = r \dpansu f_{\Delta_r(p)}\text{.}
\end{equation}

By a theorem of Cheeger and Weaver, differentiation in the horizontal directions provides a concrete description of the differential structure of a Carnot group. 

\begin{theorem}[{\cite[Remark 4.66]{Cheeger}, \cite[Theorems 39 and 43]{Weaver}}]
\label{pansucheeger}
Let $\BBG$ be a Carnot group with $H$, $\mathfrak h$, and $\BBH$ as defined above.  Then $\BBG$ admits a differentiable structure with a single coordinate chart, namely the quotient map $\pi\colon \BBG\rightarrow \BBH$ defined above.  For every $f\in\lip(\BBG)$, the differential $\diff{\pi} f\colon \BBG \rightarrow \mathfrak h^*$ is given by $\diff{\pi} f_p = \dpansuh f_p$, whenever the latter is defined. 
If $p$ is a point of (Pansu) differentiability, then for every $u\in H$, $X^u_p(f)$ exists and satisfies 
\begin{equation}
\label{partialcheeger}
\langle \diff{\pi} f_p,\pi_*u\rangle = X^u_p(f)\text{.}
\end{equation} 
\end{theorem}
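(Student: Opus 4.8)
The plan is to verify directly that the single pair $(\BBG,\pi)$ is a coordinate patch in the sense of Definition \ref{structure}; once that is done, the first assertion of the theorem is immediate (one chart covers $\BBG$), and the identification $\diff{\pi}f_p=\dpansuh f_p$ is part of the construction rather than something to prove afterwards. The only substantial input is Pansu's Theorem \ref{pansu}; the abstract existence of a differentiable structure coming from Jerison's Poincar\'e inequality is not needed. Fixing $f\in\lip(\BBG)$, I would let $Y_f$ be the full-measure set of points of Pansu differentiability of $f\colon\BBG\to\BBR$. At such a point $p$, the homomorphism $\Dpansu f_p\colon\BBG\to\BBR$ annihilates $\BBV$ (the commutator subgroup, by the stratification of $\mathfrak g$ and commutativity of $\BBR$), so it factors as $\Dpansu f_p=\Dpansuh f_p\circ\pi$; since $\BBH$ is abelian and simply connected, $\exp$ identifies it with $\mathfrak h$ and $\Dpansuh f_p$ with an element $\dpansuh f_p\in\mathfrak h^*$, and I would set $\diff{\pi}f_p:=\dpansuh f_p$. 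Measurability of $p\mapsto\diff{\pi}f_p$ is routine: for a fixed basis $e_1,\dotsc,e_n$ of $H$, each coordinate $p\mapsto\langle\dpansuh f_p,\pi_*e_i\rangle=\Dpansu f_p(\exp e_i)$ is, by \eqref{partialpansu}, the pointwise limit as $t\to 0$ of the maps $p\mapsto t^{-1}\bigl(f(p\cdot\Delta_t\exp e_i)-f(p)\bigr)$, each continuous in $p$.

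The real content is the infinitesimal estimate \eqref{infinitesimal}--\eqref{smalloerror}, and its only non-formal ingredient is to upgrade Pansu's pointwise convergence $f^t_p(q)\to\Dpansu f_p(q)$ to convergence that is \emph{uniform} for $q$ in the compact ``unit sphere'' $S=\{q\in\BBG:\cdist(\gunit,q)=1\}$. I would get this from an Arzel\`a--Ascoli argument: each rescaling $f^t_p$ fixes the identity, $f^t_p(\gunit)=0$, and has Lipschitz constant $L(f)$, so the family $\{f^t_p\}_{t>0}$ is equibounded and equicontinuous on compacta; any sequence $t_j\to 0$ then has a subsequence along which $f^{t_j}_p$ converges uniformly on compact sets, and the limit is forced to be the pointwise limit $\Dpansu f_p$; a standard contradiction argument upgrades this to uniform convergence on $S$.

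Granting the uniform convergence, the estimate itself is a short computation. For $x$ near $p$ with $x\neq p$, set $w=p^{-1}x$, $t=\cdist(\gunit,w)=\dist(x,p)$ (left-invariance), and $q=\Delta_{1/t}(w)$; since $\Delta_r$ rescales $\cdist$ by $r$ (see \eqref{dilation}), $q\in S$ and $t\to 0$ as $x\to p$. From the definition of the rescaling, $f(x)-f(p)=f(p\cdot\Delta_t q)-f(p)=t\,f^t_p(q)$, while, since $\BBH$ is abelian and $\pi$ intertwines $\Delta_t$ with multiplication by $t$ on $\BBH$, one has $\pi(x)-\pi(p)=\pi(w)=t\,\pi(q)$, whence
\begin{equation*}
\langle\diff{\pi}f_p,\pi(x)-\pi(p)\rangle=t\,\langle\dpansuh f_p,\pi(q)\rangle=t\,\Dpansuh f_p(\pi(q))=t\,\Dpansu f_p(q)\text{.}
\end{equation*}
Subtracting, the error term in \eqref{infinitesimal} is $E^f_p(x)=t\bigl(f^t_p(q)-\Dpansu f_p(q)\bigr)$, so $|E^f_p(x)|/\dist(x,p)=|f^t_p(q)-\Dpansu f_p(q)|\to 0$ by the uniform convergence on $S$. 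For uniqueness of $\diff{\pi}f_p$ I would test the expansion along the curves $x=p\cdot\exp(su)$, $u\in H$, as $s\to 0$, where $\dist(x,p)=O(|s|)$ and $\pi(x)-\pi(p)=s\,\pi_*u$: this forces any admissible differential to agree with $\diff{\pi}f_p$ on $\pi_*H=\mathfrak h$. That establishes that $(\BBG,\pi)$ is a coordinate patch, and \eqref{partialcheeger} then needs no new work, being \eqref{partialpansu} combined with $\dpansuh f_p\circ\pi_*=\dpansu f_p$ and $\diff{\pi}f_p=\dpansuh f_p$.

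I expect the uniform-convergence-on-$S$ step to be the only genuine obstacle; the rest is careful bookkeeping with the identifications among $H$, $\mathfrak h$, and $\BBH$ and with the equivariance of $\pi$ and the dilations $\Delta_t$.
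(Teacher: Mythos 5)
Your proposal is correct and follows essentially the same route as the paper's own argument: both rest on Pansu's theorem, upgrade the pointwise convergence of the rescalings $f^t_p$ to uniform convergence on a fixed compact set (the paper uses the unit ball, you the unit sphere) via the uniform Lipschitz bound $L(f^t_p)=L(f)$, and then obtain \eqref{infinitesimal}--\eqref{smalloerror} by taking $t=\dist(x,p)$ so that the rescaled increment lands in that compact set. Your additional remarks on measurability and uniqueness of $\diff{\pi}f_p$ are correct bookkeeping that the paper leaves implicit.
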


\section{Precurrents in Carnot groups.}
\label{carnotprecurrents}

From Theorem \ref{pansucheeger}, we know that precurrents in Carnot groups have the form
\begin{equation}
T( f \,d g^1 \wedge \dotsb \wedge d g^k) =   \int_\BBG \langle  f \, \dpansuh g^1\wedge \dotsb\wedge \dpansuh g^k,\hat{\lambda}\rangle \, d\mu\text{,}
\end{equation}
where $\hat{\lambda}\colon \BBG\rightarrow \bigwedge^k \mathfrak h$  is locally integrable.  Since the restriction of the projection map $\pi_*|_{\bigwedge^k H}\colon \bigwedge^k H\rightarrow \bigwedge^k \mathfrak{h}$ is an isomorphism (via the isomorphism $\pi_*|_H\colon H\rightarrow \mathfrak{h}$), it follows that there is a locally integrable $k$-vector field $\tilde{\lambda} \colon \BBG \rightarrow \bigwedge^k H$ such that
\begin{align}
T(f \,d g^1 \wedge \dotsb \wedge d g^k)
=   \int_\BBG \langle f(p) \, \dpansuh g^1_{p}\wedge \dotsb\wedge \dpansuh g^k_{p},\pi_*(\tilde{\lambda}_p)\rangle \, d\mu(p)\label{weakcarnot}\\
=   \int_\BBG \langle f(p) \, \dpansu g^1_{p}\wedge \dotsb\wedge \dpansu g^k_{p},\tilde{\lambda}_p\rangle \, d\mu(p)\notag
\end{align}
We denote the precurrent in the above equation by $T_{\tilde{\lambda}}$.  For the rest of this chapter, all $k$-vector fields under consideration will be locally integrable, and so we generally omit this modifier and simply refer to such an object as a $k$-vector field.

Let $\pi^1,\dotsc,\pi^n$ be defined as in the discussion following Corollary \ref{fewcurrents}.  Let $\{u_1,\dotsc,u_n\}$ be the (orthonormal) basis for $H$ dual to $\{\dpansu \pi^1\,\dotsc,\dpansu \pi^n\}$.  Then the simple $k$-vectors $\tilde{u}_a=u_{a_1} \wedge\dotsb\wedge u_{a_k}$ form a basis for $\bigwedge^k H$, and so every $k$-vector field $\tilde{\lambda}$ has the form
\begin{equation}
\label{kvectexp}
\tilde{\lambda} = \sum_{a \in \Lambda_{k, n}} \lambda^a \tilde{u}_a\text{,}
\end{equation}
for locally integrable functions $\lambda^a$ on $\BBG$.

An initial observation is that, as with the currents described in this paper, precurrents have locally finite mass.
\begin{lemma}
\label{prefinitemass}
Let $T$ be a $k$-precurrent in $\BBG$.  Then $T$ has locally finite mass.
\end{lemma}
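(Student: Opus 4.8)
The plan is to bound the mass of a $k$-precurrent $T = T_{\tilde\lambda}$ directly from the defining integral formula \eqref{weakcarnot}, using the fact that the Pansu differential of a Lipschitz function is controlled pointwise by the Lipschitz constant. First I would recall from \eqref{partialpansu} that for $g\in\liploc(\BBG)$ and $p$ a point of Pansu differentiability, $\langle\dpansu g_p,u\rangle = X^u_p(g)$ for $u\in H$, and since the horizontal directions realize the Carnot-Carath\'eodory metric infinitesimally, one has $\|\dpansu g_p\|_{H^*}\leq L(g\restrict{\supp(f)})$ wherever $p\in\supp(f)$ (more precisely, restricting the comass bound to the relevant set; one uses $L(g^i\restrict{\supp(f)})\le 1$ as in Definition~\ref{comass}). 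Consequently, for a simple form $f\,dg^1\wedge\dotsb\wedge dg^k$ with all $L(g^i\restrict{\supp(f)})\le 1$, the integrand in \eqref{weakcarnot} is pointwise bounded by $|f(p)|\,\|\tilde\lambda_p\|$, where $\|\cdot\|$ is the norm on $\bigwedge^k H$ induced by the inner product on $H$.

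The key steps, in order, are: (1) establish the pointwise estimate $|\langle f(p)\,\dpansu g^1_p\wedge\dotsb\wedge\dpansu g^k_p,\tilde\lambda_p\rangle| \le |f(p)|\,\|\tilde\lambda_p\|$ whenever $L(g^i\restrict{\supp(f)})\le 1$, which follows because a simple $k$-covector of norm $\le 1$ pairs with a $k$-vector to give at most its norm; (2) integrate to get $|T(f\,dg^1\wedge\dotsb\wedge dg^k)| \le \int_{\supp(f)} |f|\,\|\tilde\lambda\|\,d\mu$; (3) observe that by Definition~\ref{comass}, taking the infimum over representations of a form $\omega\in\extformsc{k}(U)$ with $\|\omega\|\le 1$, we get $T(\omega)\le \int_U \|\tilde\lambda\|\,d\mu$; (4) conclude from Definition~\ref{massdef} that $\|T\|(U)\le \int_U\|\tilde\lambda\|\,d\mu$, which is finite for any $U$ with compact closure since $\tilde\lambda$ (equivalently each $\lambda^a$ in \eqref{kvectexp}) is locally integrable. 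Hence $\|T\|$ is locally finite. One should also note in passing that $T$ indeed satisfies linearity and locality — this is already remarked after Definition~\ref{weakdef} — so the content here is purely the mass bound.

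I expect the only genuine subtlety to be step (1): care is needed to pass from "$L(g^i\restrict{\supp(f)})\le 1$ in the Carnot-Carath\'eodory metric" to "$\|\dpansu g^i_p\|_{H^*}\le 1$", because the Pansu differential is a priori defined only almost everywhere, and the relevant a.e.\ set must be arranged to have full measure inside $\supp(f)$ (it does, since $\BBG$ is covered by the single chart $\pi$ of Theorem~\ref{pansucheeger}, so $\dpansuh g^i$, hence $\dpansu g^i$, is defined $\mu$-a.e.). The bound $\|\dpansu g^i_p\|\le L(g^i\restrict{\supp(f)})$ itself is immediate from \eqref{partialpansu} together with the fact that moving in a unit horizontal direction changes a point by Carnot-Carath\'eodory distance $t+o(t)$, so differentiating the Lipschitz bound gives the inequality; but one must be slightly careful that this uses the Lipschitz constant on a neighborhood of $\supp(f)$ rather than on all of $\BBG$, which is harmless after shrinking and invoking the locality remark (Remark~\ref{localityremark}). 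Everything else is a routine unwinding of the definitions of comass and mass.
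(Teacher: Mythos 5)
Your proposal is correct and follows essentially the same route as the paper: decompose a form of comass at most $1$ into simple pieces with $\sum_s|f_s|\leq 1+\epsilon$ and $g^i_s\in\lip_1$, bound the horizontal derivatives $\langle \dpansu g^i_s,u_j\rangle = X^{u_j}(g^i_s)$ by the Lipschitz constant via \eqref{pansusmooth}, and integrate against the locally integrable vector field. The only (harmless) difference is that you bound the pairing intrinsically by $|f|\,\|\tilde\lambda\|$ via Hadamard and Cauchy--Schwarz, whereas the paper expands $\tilde\lambda=\sum_a\lambda^a\tilde u_a$ in coordinates and settles for the cruder constant $\tfrac{n!}{k!}$ in \eqref{massestimate}.
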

\begin{proof}
Let $T=T_{\tilde{\lambda}}$, let $\omega \in\formsc{k}(U)$ with $||\omega||\leq 1$, and let $U\subset \BBG$, with $\overline{U}$ compact. We may then write $\omega = \sum_{s \in S} f_s\,dg^1_s\wedge\dotsb\wedge dg^k_s$, with $\sum_{s\in S}|f_s|\leq (1+\epsilon)$ for some $\epsilon>0$, and $g^i_s\in\lip_1(U)$ for each $i$ and $s$.  Since $||u_j||=1$, equation \eqref{pansusmooth} implies that $\left|\langle  (\dpansu g^i_s)_p,u_j\rangle\right| = |X^u_p(g^i_s)|\leq 1$ for every $i$ and $j$, so that
\begin{equation*}
|\langle  f \dpansu g^1_s \wedge\dotsb \wedge \dpansu g^k_s,\tilde{u}_a\rangle| \leq \frac{n!}{k!} |f|\text{.}
\end{equation*}
We therefore compute
\begin{align}
&|T_{\tilde{\lambda}}(\omega)| 
\leq \sum_{s \in S} |T(f_s\,dg^1_s\wedge\dotsb\wedge dg^k_s)|
\leq \sum_{s\in S} \sum_{a\in\Lambda(k,n)} |T_{\lambda^a \tilde{u}_a} (f_s\,dg^1_s\wedge\dotsb\wedge dg^k_s)\notag\\
&\leq \sum_{s\in S} \sum_{a\in\Lambda(k,n)} \int_U  |\lambda^a|\cdot|\langle  f_s \dpansu g^1_s \wedge\dotsb \wedge \dpansu g^k_s,\tilde{u}_a\rangle| \,d\mu\label{massestimate}\\
&\leq \sum_{s\in S} \sum_{a\in\Lambda(k,n)} \int_U  |\lambda^a| \frac{n!}{k!}|f_s|\,d\mu
\leq (1+\epsilon)\frac{n!}{k!}  \sum_{a\in\Lambda(k,n)} \int_U  |\lambda^a|\,d\mu\text{,}\notag
\end{align}
where the last line is finite as a result of the local integrability of the functions $\lambda^a$.
\end{proof}

As is the case with currents, the finite mass condition extends the domain of a precurrent to $\measformsc{k}(\BBG)$, and to $\measextformsc{k}(\BBG)$.

We define restrictions of precurrents exactly as we did in Section \ref{currentdefinitions} for currents.  That is, if $T$ is a $k$-precurrent, and $\omega\in\measforms{j}(\BBG)$, we define the restriction
$T\restrict{\omega}$ by equation \eqref{restrictioneq}.

Recall that given a $k$-vector $\hat{u}\in \bigwedge^k \mathfrak g$ and a $j$-covector $\hat{a}\in \bigwedge^j \mathfrak g^*$, there is a unique $k-j$ vector $\hat{u}\restrict{\hat{a}}\in \bigwedge^{k-j} \mathfrak g$ such that for all $\hat{b}\in \bigwedge^{k-j} \mathfrak g^*$,
$\langle \hat{b},\hat{u}\restrict{\hat{a}}\rangle = \langle \hat{a}\wedge \hat{b},\hat{u} \rangle$.
Thus 
\begin{align*}
&T_{\tilde{\lambda}}\restrict{\beta\,dh^1\wedge\dotsb\wedge dh^j}(f\,dg^1\wedge\dotsb\wedge dg^{k-j})\\
&=\int_\BBG \langle  \beta f \, \dpansu h^1\wedge\dotsb\wedge \dpansu h^j \wedge \dpansu g^1\wedge \dotsb\wedge \dpansu g^k,\tilde{\lambda}\rangle \, d\mu\\
&=  \int_\BBG \langle  f\, \dpansu g^1\wedge \dotsb\wedge \dpansu g^k,\tilde{\lambda}\restrict{\beta \, \dpansu h^1\wedge\dotsb\wedge \dpansu h^j}\rangle \, d\mu\\
&= T_{\tilde{\lambda}\restrict{\beta \, \dpansu h^1\wedge\dotsb\wedge \dpansu h^j}}(f\,dg^1\wedge\dotsb\wedge dg^{k-j})\text{,}
\end{align*}
and so a restriction of a precurrent is again a precurrent.


As a result of the expansion in equation \eqref{kvectexp},  every precurrent $T$ has the form
\begin{equation}
\label{weakexpansion}
T = T_{\tilde{\lambda}} = \sum_{a \in \Lambda_{k, n}} T_{\lambda^a \tilde{u}_a} = \sum_{a \in \Lambda_{k, n}} T_{\tilde{u}_a}\restrict{\lambda^a}\text{.}
\end{equation}

\begin{remark}
\label{uniquevectorfield}
Note that $T=0$ if and only if $\lambda^a=0$ almost everywhere for each $a\in\Lambda_{k,n}$.  Indeed, if $\lambda^b\neq 0$ on a set of positive measure, then 
\begin{equation*}
T_{\tilde{\lambda}}\restrict{d\pi_{b_1}\wedge\dotsb\wedge d\pi_{b_k}}
= \sum_{a \in \Lambda_{k, n}} T_{\tilde{u}_a\restrict{\lambda^a \dpansu \pi_{b_1}\wedge\dotsb\wedge \dpansu \pi_{b_k}}}
=  T_{\tilde{u}_b\restrict{\lambda^b \dpansu \pi_{b_1}\wedge\dotsb\wedge \dpansu \pi_{b_k}}}
= T_{\lambda^b}
\neq 0\text{,}
\end{equation*}
where $\lambda^b$ is viewed as a $0$-vector field.  To prove the last equation, assume 
without loss of generality that $\lambda^b >\epsilon>0$ on a compact set of positive measure $S\subset \BBG$, so that we have
\begin{equation*}
T_{\lambda^b}(\charfcn_{S})= \int_S \lambda^b > \epsilon\mu(S)>0\text{.}
\end{equation*}

It follows from the previous paragraph that the vector field $\tilde{\lambda}$ in the expansion \eqref{weakexpansion} is uniquely determined up to null sets, so that $T_{\tilde{\lambda_1}}=T_{\tilde{\lambda_2}}$ if and only if $\tilde{\lambda_1} = \tilde{\lambda_2}$ almost everywhere.
\end{remark}

\subsection*{Smooth forms and smooth restrictions}
We have already defined the restriction of a current or precurrent by a metric form, or extended form.  We now discuss the case where a form is smooth.

\begin{definition}
\label{smoothformdef}
The elements of the subspaces $\smoothforms{k}(\BBG)=
\smooth(\BBG)^{k+1} \subset \forms{k}(\BBG)$  and $\smoothextforms{k}(\BBG) = \smooth(\BBG)\otimes \bigwedge^k\smooth(\BBG) \subset \extforms{k}(\BBG)$ are called \textbf{simple smooth forms} and \textbf{smooth forms}, respectively.
\end{definition}
If $\omega =f\,dg^1 \wedge \dotsb \wedge dg^k\in  \smoothforms{k}(\BBG)$, we denote by $\hat{\omega}$ the differential form $f\,\driem g^1\wedge \dotsb\wedge \driem g^k \in \diffforms{k}(\BBG)$.  
Because every differential $k$-form $\theta\in \diffforms{k}(\BBG)$ can be written 
\begin{equation*}
\theta = \sum_{a\in\Lambda(k,\dim(\BBG))} f_a\,\driem x_{a_1}\wedge\dotsb\wedge \driem x_{a_k}\text{,}
\end{equation*}
the map $\smoothextforms{k}(\BBG)\rightarrow \diffforms{k}(\BBG)$ given by $\omega\rightarrow \hat{\omega}$ is surjective.  Here we have implicitly invoked the fact that, as $\BBG$ is nilpotent, connected, and simply connected, the exponential map $\exp\colon \mathfrak g \rightarrow \BBG$ is a diffeomorphism, and $\mathfrak g$ in turn is diffeomorphic to $\BBR^{\dim(\BBG)}$. 
For an arbitrary manifold, of course, we could prove surjectivity by way of a partition of unity argument.

Note that if $\omega=f \,d g^1 \wedge \dotsb \wedge d g^k\in\smoothforms{k}(X)$, then equation \eqref{pansusmooth} implies that for every precurrent $T_{\tilde{\lambda}}$, we have
\begin{equation*}
T_{\tilde{\lambda}}\restrict{\omega}
= T_{\tilde{\lambda}\restrict{f \,\dpansu g^1 \wedge \dotsb \wedge \dpansu g^k}}
= T_{\tilde{\lambda}\restrict{f \,\driem g^1 \wedge \dotsb \wedge \driem g^k}}
= T_{\tilde{\lambda}\restrict{\hat{\omega}}}\text{.}
\end{equation*}
In particular, if $T$ is a precurrent and $\hat{\omega}_1=\hat{\omega}_2$, then $T\restrict{\omega_1}=T\restrict{\omega_2}$.
With this in mind we define the restriction of a precurrent by a smooth differential form.
\begin{definition}
\label{smoothrestrictions}
Let $T$ be a $k$-precurrent, and $\theta\in\diffforms{j}(\BBG)$.  We define the \textbf{restriction of $T$ by $\theta$} to be the $(k-j)$ precurrent 
\begin{equation*}
T\restrict{\theta} = T\restrict{\omega}\text{,}
\end{equation*}
where $\omega\in\smoothextforms{j}(\BBG)$ is any form such that $\hat{\omega} = \theta$.
We also define, for $\omega_1\in\extformsc{k_1}(\BBG)$, $\omega_2\in\extforms{k_2}(\BBG)$ and $\theta \in \diffforms{k_3}(\BBG)$, with $k_1+k_2+k_3 = k$, 
\begin{equation*}
T(\omega_1\wedge\theta\wedge\omega_2) = (-1)^{k_1 k_3} T\restrict{\theta}(\omega_1\wedge\omega_2)\text{.}
\end{equation*}
\end{definition}
Finally, we note that the extension to smooth forms applies to currents as well as precurrents.  To see this, suppose $\omega_1,\omega_2\in\smoothextforms{k}(\BBG)$, with $\hat{\omega_1}=\hat{\omega_2}=\theta\in\diffforms{k}(\BBG)$.  Then for any $T\in \currents{k}(\BBG)$, with $l\leq k$, with absolutely continuous mass $||T||$, $T$ is a precurrent, and so we have
\begin{equation}
\label{restrictsmooth}
T\restrict{\omega_1}=T\restrict{\omega_2}\text{.}
\end{equation}
By Proposition \ref{approximationlemma}, equation \eqref{restrictsmooth} extends to all currents in $\currents{k}(\BBG)$, making the restriction to  $\theta$ well-defined.




\subsection*{Normal currents}
If $T$ is a $k$-precurrent, we can define the boundary $\partial T$ as in Definition \ref{boundarydef}. It is not necessarily the case that $\partial T$ is also a precurrent --- the proof of Proposition \ref{currentsk} will provide a counterexample to this as well.  However, boundary continuity is closely related with the question of which precurrents are currents, as the following proposition indicates.

\begin{proposition}
\label{weaknormal}
Let $T$ be a  $k$-precurrent such that $\partial T$ is a $(k-1)$-precurrent.  Then $T\in\normalloc{k}(\BBG)$.
\end{proposition}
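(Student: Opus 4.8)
The plan is to establish that $T$ satisfies the continuity axiom; everything else then follows at once. Granting continuity, $T$ is a current, and it has locally finite mass by Lemma~\ref{prefinitemass} (being a precurrent); its boundary $\partial T$ is a precurrent by hypothesis, hence also of locally finite mass by Lemma~\ref{prefinitemass}, and $\partial T$ inherits continuity from $T$ (if $f_i\,dg_i^1\wedge\dotsb\wedge dg_i^{k-1}$ converges in $\formsc{k-1}(\BBG)$, then, fixing a single cutoff $\sigma$ as in Remark~\ref{localityremark}, the forms $(\sigma,f_i,g_i^1,\dotsc,g_i^{k-1})$ converge in $\formsc{k}(\BBG)$, since convergence in $\lipc$ implies convergence in $\liploc$). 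Thus $T\in\normalloc{k}(\BBG)$.

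To prove continuity, suppose $\omega_i=f_i\,dg_i^1\wedge\dotsb\wedge dg_i^k$ converges to $\omega=f\,dg^1\wedge\dotsb\wedge dg^k$ in $\formsc{k}(\BBG)$, and write $T(\omega_i)-T(\omega)$ as a telescoping sum, each of whose terms either replaces $f_i$ by $f$ or replaces one $g_i^l$ by $g^l$, the remaining factors unchanged. The term of the first type, $T((f_i-f)\,dg_i^1\wedge\dotsb\wedge dg_i^k)$, tends to $0$ directly from the mass inequality \eqref{masscriterion1}: the $g_i^j$ are equi-Lipschitz on a common compact set $K$ containing all the supports $\supp(f_i)$, $||T||(K)<\infty$ by Lemma~\ref{prefinitemass}, and $f_i\to f$ boundedly and pointwise, so dominated convergence applies. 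A term of the second type, after the alternating property is used to bring the factor $d(g_i^l-g^l)$ to the front, has the shape $T(f\,dh_i\wedge dp^1\wedge\dotsb\wedge dp^{k-1})$, where $h_i=g_i^l-g^l\to 0$ in $\liploc(\BBG)$, $f\in\lipc(\BBG)$ is fixed, and the remaining factors $p^1,\dotsc,p^{k-1}$ (possibly $i$-dependent) are equi-Lipschitz on compact sets; it remains to show each such term tends to $0$.

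Here the essential tool is a Leibniz rule for precurrents. Theorem~\ref{alternateleibniz} presupposes that $T$ is a current, but the alternating property and the Leibniz rule both extend to precurrents at once, from the identities $\dpansu g^i\wedge\dpansu g^j=-\dpansu g^j\wedge\dpansu g^i$ and $\dpansu(f h_i)=f\,\dpansu h_i+h_i\,\dpansu f$ in $\bigwedge\mathfrak g$, the latter valid almost everywhere because $X^u_p(fh_i)=f(p)X^u_p(h_i)+h_i(p)X^u_p(f)$ together with \eqref{partialpansu}. Substituting this into the representation \eqref{weakcarnot} of $T=T_{\tilde\lambda}$ yields
\begin{equation*}
T(f\,dh_i\wedge dp^1\wedge\dotsb\wedge dp^{k-1})=\partial T((f h_i)\,dp^1\wedge\dotsb\wedge dp^{k-1})-T(h_i\,df\wedge dp^1\wedge\dotsb\wedge dp^{k-1})\text{,}
\end{equation*}
all terms being well defined by the compact support of $f$ (and of $f h_i$) and Remark~\ref{localityremark}. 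Since a pointwise-convergent, equi-Lipschitz sequence converges uniformly on compact sets, $f h_i\to 0$ uniformly on $\supp(f)$ and $h_i\to 0$ uniformly on every compact set. Applying the mass inequality \eqref{masscriterion1} to $T$ (locally finite mass by Lemma~\ref{prefinitemass}) and to $\partial T$ (a precurrent, hence of locally finite mass by Lemma~\ref{prefinitemass}) --- in the last term first replacing $h_i$ by $\sigma h_i$ for a fixed cutoff $\sigma$ via Remark~\ref{localityremark}, so that \eqref{masscriterion1} applies --- shows that both terms on the right tend to $0$. Hence $T(\omega_i)\to T(\omega)$, and $T$ is a current.

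The conceptual content is simply that the hypothesis on $\partial T$ is exactly what licenses the integration by parts above, turning the uncontrolled oscillation of $dh_i$ --- which is precisely why the continuity axiom can fail for a general precurrent --- into quantities dominated by the mass measures $||T||$ and $||\partial T||$. I expect the only genuine difficulty to be bookkeeping: arranging cutoffs so that the Leibniz identity and the inequality \eqref{masscriterion1} are applied legitimately to functions in $\liploc(\BBG)$ rather than $\lipc(\BBG)$, and recording the elementary facts that $\dpansu$ obeys a product rule and that equi-Lipschitz, pointwise-convergent sequences converge uniformly on compact sets. No further input should be needed.
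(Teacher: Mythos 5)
Your proof is correct and follows essentially the same route as the paper: a telescoping decomposition of $T(\omega_i)-T(\omega)$, with the zeroth term controlled by the mass inequality and each remaining term converted via the Leibniz rule into a $\partial T$-term plus a $T$-term, both killed by the locally finite masses of $T$ and $\partial T$. Your explicit justification that the Leibniz rule holds for precurrents (via the product rule for $\dpansu$ in the integral representation), rather than citing Theorem~\ref{alternateleibniz} which is stated only for currents, is a welcome extra precaution but does not change the argument.
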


\begin{proof}

Multi-linearity and locality follow respectively from the linearity and locality of the Pansu-differentiation operator, and we have already shown precurrents have locally finite mass in Lemma \ref{prefinitemass} above.  All that remains, then, is to check that $T$ is continuous. 

We wish to show that for every sequence of forms $f_i \, dg^1_i \wedge \dotsb \wedge dg^k_i$ converging to $f \, dg^1 \wedge \dotsb \wedge dg^k\in\formsc{k}(\BBG)$, we have
\begin{equation}
\label{continuityeq}
\lim_{i\rightarrow \infty} T(f_i \, dg^1_i \wedge \dotsb \wedge dg^k_i) = T(f \, dg^1 \wedge \dotsb \wedge dg^k)\text{.}
\end{equation}

We first decompose the limit in \eqref{continuityeq}.
\begin{align}
& \lim_{i\rightarrow \infty} T(f_i \, dg^1_i \wedge \dotsb \wedge dg^k_i) \notag\\
=& \lim_{i\rightarrow \infty} T((f_i - f)\, dg^1_i \wedge \dotsb \wedge dg^k_i) +\lim_{i\rightarrow \infty}T(f \, d(g^1_i-g^1) \wedge dg^2_i \wedge \dotsb \wedge dg^k_i) \label{limitcomputation}\\
 &  +  \lim_{i \rightarrow \infty} T(f\, dg^1 \wedge dg^2_i \dotsb\wedge dg^k_i)\text{.}\notag
\end{align} 

By the locally finite mass condition mentioned above for $T$, the first term on the right-hand side of equation \eqref{limitcomputation} is $0$.  By way of the Leibniz rule, we next compute
\begin{align*}
& \lim_{i\rightarrow \infty}T(f \, d(g^1_i-g^1) \wedge dg^2_i \wedge \dotsb \wedge dg^k_i)\\
=& \lim_{i\rightarrow \infty}\partial T(f(g^1_i-g^1)\, dg^2_i \wedge \dotsb \wedge dg^k_i) - \lim_{i\rightarrow \infty}T((g^1_i-g^1)\,df \wedge dg^2_i \wedge \dotsb \wedge dg^k_i)\\
=& 0  \text{.}
\end{align*}
Here both terms in the second line vanish on account of the locally finite mass condition, since $T$ and $\partial T$ are both precurrents.

Equation \eqref{limitcomputation} then reduces to
\begin{equation}
\label{limcomp1}
\lim_{i\rightarrow \infty} T(f_i \, dg^1_i \wedge \dotsb \wedge dg^k_i) = \lim_{i\rightarrow \infty} T(f\, dg^1 \wedge dg^2_i \wedge \dotsb\wedge dg^k_i)\text{.}
\end{equation}
Moreover, by the alternating property, from equation \eqref{limcomp1} we deduce that for $1\leq j \leq k$,  
\begin{equation}
\label{limcomp1eq}
\lim_{i\rightarrow \infty} T(f_i \, dg^1_i \wedge \dotsb \wedge dg^k_i) = \lim_{i\rightarrow \infty} T(f\, dg_i^1 \wedge \dotsb dg^{j-1}_i\wedge dg^j \wedge dg^{j+1}_i\wedge\dotsb\wedge dg^k_i)\text{.}
\end{equation}
Applying equation \eqref{limcomp1eq} successively for $j=1,\dotsc,k$ yields equation \eqref{continuityeq}.
\end{proof}

\subsection*{1-currents}

Though we will see shortly that a $k$-precurrent need not be a current when $k\geq 2$, $1$-precurrents are always currents. 
\begin{lemma}
\label{weak1}
Every $1$-precurrent in $\BBG$ is a current.
\end{lemma}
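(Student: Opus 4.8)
The plan is to verify the continuity axiom directly, since linearity, locality, and locally finite mass are already established (the latter by Lemma \ref{prefinitemass}). A $1$-precurrent has the form $T=T_{\tilde\lambda}$ with $\tilde\lambda=\sum_{i=1}^n \lambda^i u_i$ a locally integrable horizontal vector field, so that
\begin{equation*}
T(f\,dg)=\int_\BBG f(p)\,\langle \dpansu g_p,\tilde\lambda_p\rangle\,d\mu(p)
=\sum_{i=1}^n \int_\BBG f(p)\,\lambda^i(p)\,X^{u_i}_p(g)\,d\mu(p)\text{,}
\end{equation*}
using equation \eqref{pansusmooth}. So it suffices to treat each term $T_{\lambda u}$ for a single horizontal left-invariant vector field $X=X^u$ and a locally integrable scalar $\lambda$, i.e. to show $f_i\,dg_i\to f\,dg$ in $\formsc{1}(\BBG)$ implies $\int f_i\lambda\, X g_i\,d\mu\to \int f\lambda\, Xg\,d\mu$.

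The key point is that for a fixed compact set $K$ containing all the supports, the functions $Xg_i$ are uniformly bounded (by the common Lipschitz bound $N$ on the $g_i|_K$, via $|X^u_p(g_i)|\le \|u\|\,L(g_i|_K)$ from \eqref{partialpansu}/\eqref{pansusmooth}, with $\|u\|=1$), and $\lambda\charfcn_K\in L^1(\mu)$. First, since $f_i\to f$ uniformly with supports in $K$, one handles the difference
\begin{equation*}
\int (f_i-f)\lambda\, Xg_i\,d\mu\text{,}
\end{equation*}
which is bounded by $\|f_i-f\|_\infty\, N\int_K|\lambda|\,d\mu\to 0$. So it remains to show $\int f\lambda\,Xg_i\,d\mu\to \int f\lambda\,Xg\,d\mu$, i.e. $Xg_i\to Xg$ weakly against the fixed $L^1$ function $f\lambda$ (extended by zero off $K$). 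This is where boundedness of $T$ actually does the work for $1$-precurrents without needing $\partial T$: the functionals $\mu_i(\phi)=\int \phi\,Xg_i\,d\mu$, $\phi\in L^1(K,\mu)$, are uniformly bounded in $L^\infty$-dual norm by $N$, so by a density argument it is enough to check convergence on a dense subclass of test functions. By Lemma \ref{smoothdense} one may take $\phi=h\in\smoothc(\BBG)$; then integrate by parts against the divergence of the left-invariant field $X$ with respect to Haar measure. Because $\BBG$ is unimodular and $X$ is left-invariant, $X$ is divergence-free (the flow of $X$ is right translation by $\exp(tu)$, which preserves the left Haar measure $\mu$), so $\int h\,(Xg_i)\,d\mu=-\int (Xh)\,g_i\,d\mu$. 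Since $g_i\to g$ pointwise and is uniformly bounded on the support of $Xh$, dominated convergence gives $\int (Xh)g_i\,d\mu\to \int (Xh)g\,d\mu=-\int h\,(Xg)\,d\mu$, completing the argument.

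Assembling the pieces: continuity of $T$ on $\formsc{1}(\BBG)$ follows by summing the $n$ terms, so $T$ satisfies all three axioms of Definition \ref{currentdef} and, having locally finite mass, lies in $\currentsloc{1}(\BBG)$. The main obstacle is the weak-convergence step $Xg_i\rightharpoonup Xg$ in $(L^1)^*$; the essential mechanism is the integration-by-parts identity, which requires knowing $X=X^u$ is divergence-free for Haar measure — this is exactly where unimodularity of $\BBG$ (noted in Section \ref{carnot}) enters, and it is the reason the argument works for $k=1$ but breaks down for $k\ge 2$, where one genuinely needs control on $\partial T$ as in Proposition \ref{weaknormal}.
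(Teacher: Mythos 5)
Your proof is correct, but it is organized differently from the paper's. The paper first reduces to the translation-invariant precurrents $T_{u_i}$ (writing $T=\sum_i T_{u_i}\restrict{\lambda^i}$ and using that restriction preserves currents), proves $\partial T_{u_i}=0$ via Weil's quotient-integration formula for Haar measure and the fundamental theorem of calculus along integral curves of $X^{u_i}$ (Lemma \ref{noboundary}), and then invokes the general machinery of Proposition \ref{weaknormal}, whose proof uses the Leibniz rule to shift the discrepancy $g_i-g$ onto the boundary. You instead keep the weight $f\lambda$ in place and verify continuity directly, recasting it as weak-* convergence $X^u g_i\rightharpoonup X^u g$ in $L^\infty$; the uniform bound $\|X^u g_i\|_{L^\infty(K)}\leq N$ lets you test only against smooth compactly supported $h$, where you integrate by parts. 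The two arguments pivot on the identical fact, namely that left-invariant horizontal fields are divergence-free for Haar measure (unimodularity of $\BBG$): your integration-by-parts identity is precisely Lemma \ref{noboundary} applied to the product $hg_i$. Your route is more self-contained --- it avoids Proposition \ref{weaknormal} and the slightly delicate step of restricting $T_{u_i}$ by the merely locally integrable $\lambda^i$ --- at the cost of the extra density and uniform-boundedness layer, and it forgoes the reusable statement $\partial T_u=0$, which the paper needs again in Proposition \ref{currentsk}. One point you should make explicit: the identity $\int h\,X^u g_i\,d\mu=-\int (X^u h)\,g_i\,d\mu$ is applied with $g_i$ only Lipschitz, so it must be justified by absolute continuity of $g_i$ along the flow lines of $X^u$ (equivalently, by applying Lemma \ref{noboundary} to $hg_i$) rather than by the smooth divergence theorem. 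Your closing remark on why the mechanism fails for $k\geq 2$ --- bracket terms landing in $V_2$ --- is exactly what equation \eqref{boundarycomputation} records.
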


Lemma \ref{weak1} follows from a simple observation, which will itself be of use momentarily, in the proof of Proposition \ref{currentsk}.  
\begin{lemma}
\label{noboundary}
Let $\BBG$ be a Carnot group.  Then an invariant $1$-precurrent has vanishing boundary.
\end{lemma}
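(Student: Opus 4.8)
The plan is to unwind the definition of the boundary, use the explicit form of an invariant $1$-precurrent, and reduce everything to the fact that a left-invariant vector field has vanishing divergence with respect to Haar measure on a unimodular group. Concretely: an invariant $1$-precurrent $T = T_{\tilde\lambda}$ has left-invariant defining vector field, so $\tilde\lambda \equiv v$ for some fixed $v \in H$, and by \eqref{weakcarnot} together with \eqref{partialpansu} this means
\begin{equation*}
T(f\,dg) = \int_\BBG f(p)\,\langle \dpansu g_p, v\rangle\, d\mu(p) = \int_\BBG f(p)\, X^v_p(f')\big|_{f'=g}\, d\mu(p),
\end{equation*}
i.e. $T(f\,dg)=\int_\BBG f\,X^v(g)\,d\mu$, where $X^v$ is the left-invariant horizontal vector field with $X^v_\gunit=v$. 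First I would compute $\partial T$ on $\formsc{0}(\BBG)=\lipc(\BBG)$: by Definition \ref{boundarydef}, the convention $df=(1,f)$ for $f\in\lipc(\BBG)$, and Remark \ref{localityremark} (take $\sigma\in\lipc(\BBG)$ with $\sigma\equiv 1$ on a neighborhood of $\supp(f)$), one gets $\partial T(f)=T(df)=T(\sigma\,df)=\int_\BBG \sigma(p)X^v_p(f)\,d\mu(p)$; since $X^v_p(f)=\langle\dpansu f_p,v\rangle$ (defined for a.e.\ $p$ by Theorem \ref{pansu}) vanishes off $\supp(f)$ while $\sigma\equiv 1$ there, this collapses to
\begin{equation*}
\partial T(f) = \int_\BBG X^v_p(f)\, d\mu(p).
\end{equation*}

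The core of the argument is then to show this integral vanishes for every $f\in\lipc(\BBG)$. Since $v\in V_1=H$, the curve $s\mapsto\exp(sv)$ is horizontal with constant Riemannian speed $\|v\|$, so $\cdist(p,p\exp(sv))\le\|v\|\,|s|$ by left-invariance, and $p\mapsto p\exp(sv)$ is precisely the flow of $X^v$. Set $g(s)=\int_\BBG f(p\exp(sv))\,d\mu(p)$. Because $\BBG$ is nilpotent, hence unimodular (as noted in Section \ref{carnot}), $\mu$ is also right-invariant, so $g(s)=\int_\BBG f\,d\mu$ is constant in $s$. Differentiating at $s=0$ under the integral sign then gives $\int_\BBG X^v_p(f)\,d\mu(p)=g'(0)=0$, hence $\partial T(f)=0$ for all $f\in\lipc(\BBG)$, i.e.\ $\partial T=0$. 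The justification for differentiating under the integral is dominated convergence: the difference quotients $\bigl(f(p\exp(sv))-f(p)\bigr)/s$ are supported in a fixed compact set for $|s|\le 1$, are bounded in absolute value by $L(f)\|v\|$ by the $\cdist$-Lipschitz estimate above, and converge as $s\to 0$ to $X^v_p(f)$ for a.e.\ $p$ by Theorem \ref{pansu} and \eqref{partialpansu}.

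The step I expect to be the main (if mild) obstacle is exactly this last passage to the limit for a merely \emph{Lipschitz} $f$, rather than a smooth one: one must be careful that $\int_\BBG X^v_p(f)\,d\mu$ is well defined (a.e.\ Pansu differentiability) and genuinely equals $g'(0)$. This is why I would carry out the computation directly via the flow of $X^v$ and dominated convergence, instead of trying to reduce to smooth $f$ through Lemma \ref{smoothdense} and then pass to a limit — at this point in the development we have not yet shown that $1$-precurrents, let alone their boundaries, are continuous, so a density argument would require an extra ingredient that the direct flow argument sidesteps. Everything else (the representation of invariant precurrents, the boundary bookkeeping, and the horizontality and length estimate for $s\mapsto\exp(sv)$) is routine.
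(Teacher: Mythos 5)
Your proof is correct, and its first half --- identifying an invariant $1$-precurrent with $f\,dg\mapsto\int_\BBG f\,X^v(g)\,d\mu$ for a fixed $v\in H$ via \eqref{weakcarnot} and \eqref{partialpansu}, and reducing the lemma to the single identity $\int_\BBG X^v(f)\,d\mu=0$ for $f\in\lipc(\BBG)$ --- is exactly the paper's reduction. Where you diverge is in how that integral is shown to vanish. The paper invokes Weil's quotient-integration formula \eqref{haarfubini} for the closed subgroup $S=\exp(\spa\{v\})$: it disintegrates $\mu$ over the cosets $pS$, observes that along each coset $X^v(f)$ is the derivative $\tfrac{d}{dt}(f\circ\gamma)$ of a compactly supported function of $t$, and concludes fiberwise by the fundamental theorem of calculus. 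You instead observe that $s\mapsto\int_\BBG f(p\exp(sv))\,d\mu(p)$ is constant by right-invariance of Haar measure (unimodularity) and differentiate under the integral at $s=0$, with dominated convergence justified by the bound $|f(p\exp(sv))-f(p)|\le L(f)\,||v||\,|s|$ coming from the horizontality of $s\mapsto p\exp(sv)$, and with a.e.\ pointwise convergence of the difference quotients to $X^v_p(f)$ supplied by Pansu's theorem. Both arguments rest on the same two facts --- unimodularity of $\BBG$ and the flow of $X^v$ being right translation by $\exp(sv)$ --- but yours trades the quotient-measure machinery of \cite{Knapp} for an elementary dominated-convergence argument; this is somewhat more self-contained, at the mild cost of having to be explicit about where $X^v_p(f)$ exists and why the limit passes inside the integral, which you handle correctly. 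Your closing remark is also well taken: since Lemma \ref{noboundary} is an input to the proof that $1$-precurrents are continuous, no continuity of $T$ or $\partial T$ is available at this stage, so a smoothing-and-density shortcut would indeed be circular without further work.
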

\begin{proof}
We must show that for any $f\in \lipc(\BBG)$, and any $u\in \mathfrak h$, 
\begin{equation}
\label{1currentvanishing}
T_u(df)= \int_{\BBG} X^u(f) = 0\text{.}
\end{equation}
Without loss of generality, we will assume $||u||=1$. Since $\BBG$ is unimodular, we recall from the theory of topological groups (see, e.g, \cite[Theorem 6.18]{Knapp}) that for any unimodular subgroup $S\subset \BBG$, with Haar measure $\mu_S$, there is a left-invariant measure $\mu_{\BBG/S}$ on the quotient $\BBG/S$ such that for any $g\in \contc(\BBG)$, we have
\begin{equation}
\label{haarfubini}
\int_{\BBG} g \,d\mu = \int_{\BBG/S} \left(\int_S g(ps)\,d\mu_S(s)\right)\,d\mu_{\BBG/S}(pS)\text{.}
\end{equation}

We apply equation \eqref{haarfubini} with $g=X^u(f)$, $S=\exp(\spa \{u\})$, and $d\mu_S= ds$, where $ds$ is the arc-length measure.  Notice that since $||u||=1$, the map $\gamma(t)= p\cdot\exp(tu)$ is an isometry, as well as an integral curve of the vector field $X^u$. We thus have
\begin{equation*}
\int_{S} X^u(f)\,ds
= \int_{\BBR} X^u_{(p\cdot\exp(tu))}(f)\,dt
= \int_{\BBR} \frac{d}{dt}(f\circ\gamma)\,dt
=0\text{,}
\end{equation*}
since $f$ has compact support.  Now equation \eqref{1currentvanishing} follows from equation \eqref{haarfubini}.
\end{proof}
\begin{proof}[Proof of Lemma \ref{weak1}]
It is enough to show that invariant $1$-precurrents are actually currents.  Indeed, once we have proved this, we see that each of the precurrents $T_{u_i}$ are $1$-currents.  But restricting a $1$-current by a function or form, as in Definition \ref{restriction}, gives us another current.  Thus every precurrent of the form $T=\sum_{i=1}^n T_{u_i}\restrict{\lambda^i}$ is a current.  By equation \eqref{weakexpansion}, every $1$-precurrent has this form.

The proof now follows from Lemma \ref{noboundary} and Proposition \ref{weaknormal}. 
\end{proof}

Though we will see that precurrents need not be currents, the following corollary to Lemma \ref{weak1} shows that precurrents are separately continuous in each variable.  
\begin{corollary}
\label{separatecontinuity}
Let $T$ be a $k$-precurrent, let $\omega_i = f\, dg^1 \wedge\dotsb\wedge dg^{j-1}\wedge dg^j_i\wedge dg^{j+1}\wedge \dotsb\wedge dg^k\in\formsc{k}(\BBG)$ be a sequence of forms, let $\omega=f\,dg^1\wedge\dotsb\wedge dg^k\in\formsc{k}(\BBG)$, and suppose $g^j_i$ converges to $g^j$ in $\liploc(\BBG)$.  Then
\begin{equation}
\label{sepcont}
\lim_{i\rightarrow\infty}T(\omega_i)= T(\omega)\text{.}
\end{equation}
Equation \eqref{sepcont} holds as well for the case $j=0$, if $f_i$ converges to $f$ in the topology of $\lipc(X)$, where $f_i=g^0_i$, and $f=g^0$.
If $T$ is a $(k+1)$-precurrent, then equation  \eqref{sepcont} holds when $T$ is replaced by $\partial T$
\end{corollary}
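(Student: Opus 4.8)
The plan is to reduce each assertion to continuity that is already established. The heart of the matter is the case $j\ge 1$: after fixing all slots but the $j$th, the resulting functional is, up to a restriction and a sign, a $1$-precurrent, hence a current by Lemma \ref{weak1}, hence continuous. In detail, precurrents are alternating in their differentiated arguments — immediate from Definition \ref{weakdef}, since $f\,\dpansu g^1\wedge\dotsb\wedge\dpansu g^k$ is alternating in $g^1,\dotsc,g^k$ — so we may permute slots at the cost of a harmless sign and assume $j=k$, i.e.\ only $g^k_i$ varies. Put $S=T\restrict{f\,dg^1\wedge\dotsb\wedge dg^{k-1}}$; this is again a precurrent, now of degree $1$, hence a $1$-current by Lemma \ref{weak1}, and therefore continuous. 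Fix a cutoff $\sigma\in\lipc(\BBG)$ with $\sigma\equiv 1$ on a neighborhood of $\supp(f)$. Since $\sigma g^k_i$ agrees with $g^k_i$ near $\supp(f)$, linearity, locality, and the definition of $S$ give $T(\omega_i)=S\bigl((\sigma,\sigma g^k_i)\bigr)$, and likewise $T(\omega)=S\bigl((\sigma,\sigma g^k)\bigr)$. Because $g^k_i\to g^k$ in $\liploc(\BBG)$ forces $\sigma g^k_i\to\sigma g^k$ in $\lipc(\BBG)$, we get $(\sigma,\sigma g^k_i)\to(\sigma,\sigma g^k)$ in $\formsc{1}(\BBG)$, and the continuity of the current $S$ yields $T(\omega_i)\to T(\omega)$.

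The case $j=0$ needs no appeal to Lemma \ref{weak1}. Here the $g^\ell$ are fixed and $f_i\to f$ in $\lipc(\BBG)$, so all the $f_i$ and $f$ are supported in a common compact set $K$ and are uniformly bounded. Since $T$ has locally finite mass by Lemma \ref{prefinitemass}, the mass estimate \eqref{masscriterion1meas} applied to the form $(f_i-f)\,dg^1\wedge\dotsb\wedge dg^k$ gives
\[
|T(\omega_i)-T(\omega)|\le\Bigl(\prod_{\ell=1}^{k}L\bigl(g^\ell|_K\bigr)\Bigr)\int_K|f_i-f|\,d||T||\text{,}
\]
whose right-hand side tends to $0$ by dominated convergence, as $||T||(K)<\infty$ and $f_i\to f$ pointwise and boundedly.

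Finally, for $\partial T$ with $T$ a $(k+1)$-precurrent, rewrite $\partial T(\omega_i)$ via Definition \ref{boundarydef} and insert a cutoff $\sigma$ — permitted by Remark \ref{localityremark}, since the leading differentiated functions are compactly supported (uniformly so when $j=0$) — so that $\partial T(\omega_i)=T\bigl(\sigma\,df\wedge dg^1\wedge\dotsb\wedge dg^j_i\wedge\dotsb\wedge dg^k\bigr)$ when $j\ge 1$, and $\partial T(\omega_i)=T\bigl(\sigma\,df_i\wedge dg^1\wedge\dotsb\wedge dg^k\bigr)$ when $j=0$. In either case this is the $(k+1)$-precurrent $T$ evaluated on a form of $\formsc{k+1}(\BBG)$ in which exactly one differentiated factor varies, and it varies in $\liploc(\BBG)$; the $j\ge1$ statement already proved, applied to $T$, then gives \eqref{sepcont} for $\partial T$. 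The genuine content of the proof is this reduction to degree $1$ together with Lemma \ref{weak1}; the only remaining point demanding attention is the cutoff bookkeeping, needed both to keep every form inside the compactly supported spaces $\formsc{\bullet}(\BBG)$ on which the (pre)currents are defined and to upgrade the $\liploc$-convergence of the varying factor to $\lipc$-convergence of its product with $\sigma$.
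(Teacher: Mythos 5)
Your proof is correct and follows essentially the same route as the paper: restrict $T$ by the fixed factors to obtain a $1$-precurrent, invoke Lemma \ref{weak1} to see it is a current, and conclude by continuity of that $1$-current; the permutation, cutoff bookkeeping, and the direct mass estimate for the $j=0$ case are harmless variations on the paper's shorter argument.
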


\begin{proof}
The restriction of $T$ to a metric$(j-1)$-form is a $1$-precurrent, and hence a current by Lemma \ref{weak1}.  Thus
\begin{align*}
&\lim_{i\rightarrow\infty} T(\omega_i) 
= \lim_{i\rightarrow \infty} (-1)^{k-j} T\restrict{dg^1 \wedge\dotsb\wedge dg^{j-1}\wedge dg^{j+1}\wedge \dotsb\wedge dg^k}(f\,dg^j_i)\\
&= (-1)^{k-j} T\restrict{dg^1 \wedge\dotsb\wedge dg^{j-1}\wedge dg^{j+1}\wedge \dotsb\wedge dg^k}(f\,dg^j)
= T(\omega)\text{.}
\end{align*}
The continuity in the variable $f$ follows from the same argument.  
The argument for $\partial T$ is identical.
\end{proof}

As a consequence of Corollary \ref{separatecontinuity} and Lemma \ref{smoothdense}, two precurrents are equal if they are equal when evaluated on smooth forms, and similarly for boundaries of precurrents.  
\begin{corollary}
\label{smootheval}
Suppose $T_1$ and $T_2$ are $k$-precurrents, and that 
\begin{equation}
\label{evaleq}
T_1(\omega)=T_2(\omega)
\end{equation}
for any smooth form $\omega\in\smoothformsc{k}(\BBG)$.
Then $T_1=T_2$. Similarly, if $\partial T_1(\omega')= \partial T_2(\omega')$ for every $\omega'\in\smoothformsc{k-1}(\BBG)$, then $\partial T_1=\partial T_2$.
\end{corollary}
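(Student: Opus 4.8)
The plan is to apply the reasoning to the difference $S = T_1 - T_2$: since a precurrent is represented (on the single chart $\pi\colon\BBG\to\BBH$) by a locally integrable $k$-vector field and these add under subtraction of functionals, $S$ is again a $k$-precurrent, and the hypothesis says $S$ vanishes on every smooth form in $\smoothformsc{k}(\BBG)$; we must conclude $S = 0$. The only difficulty is that a precurrent need not be \emph{jointly} continuous, so we cannot simply approximate $f, g^1, \dotsc, g^k$ by smooth functions all at once. What we \emph{can} do is use the \emph{separate} continuity of precurrents from Corollary \ref{separatecontinuity} to replace the functions in a simple form by smooth approximations one coordinate at a time, each replacement being harmless, with the required smooth functions supplied by Lemma \ref{smoothdense}.

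Concretely, I would fix a simple form $\omega = f\,dg^1\wedge\dotsb\wedge dg^k\in\formsc{k}(\BBG)$ and prove $S(\omega)=0$ by a finite downward induction on the number of the $k+1$ functions $f, g^1,\dotsc,g^k$ that are \emph{not} smooth. In the base case all of them are smooth and $f$ has compact support, so $\omega\in\smoothformsc{k}(\BBG)$ and $S(\omega) = T_1(\omega)-T_2(\omega) = 0$ by hypothesis. For the inductive step, if $\omega$ has a non-smooth function, choose one --- say $g^j$, the argument for $f$ being identical via the $j=0$ clause of Corollary \ref{separatecontinuity} --- and pick smooth $g^j_i\to g^j$ in $\liploc(\BBG)$ by Lemma \ref{smoothdense}. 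Then \eqref{sepcont} gives $S(\omega) = \lim_{i\to\infty} S(\omega_i)$, where $\omega_i$ is $\omega$ with $g^j$ replaced by $g^j_i$; each $\omega_i$ has one fewer non-smooth function, so $S(\omega_i)=0$ by the inductive hypothesis, and hence $S(\omega)=0$. This shows $S$ vanishes on all of $\formsc{k}(\BBG)$. Testing against the forms $f\,d\pi^{a_1}\wedge\dotsb\wedge d\pi^{a_k}$ with $f\in\lipc(\BBG)$, together with the extension \eqref{finitemassextension} of the precurrent $S$ of locally finite mass (Lemma \ref{prefinitemass}) to bounded compactly supported coefficients, forces each coefficient of the representing $k$-vector field of $S$ to vanish almost everywhere; by the uniqueness statement in Remark \ref{uniquevectorfield} this means $S=0$, i.e.\ $T_1=T_2$.

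For the boundary assertion, the same induction applies verbatim with $\partial S = \partial T_1 - \partial T_2$ in place of $S$ and $(k-1)$-forms in place of $k$-forms: the separate continuity of $\partial S$ that the induction needs is exactly the final sentence of Corollary \ref{separatecontinuity} (applied with $S$ a $k$-precurrent), the base case now invokes the hypothesis $\partial T_1(\omega') = \partial T_2(\omega')$ for $\omega'\in\smoothformsc{k-1}(\BBG)$, and the conclusion is that $\partial S$ vanishes on $\formsc{k-1}(\BBG)$, i.e.\ $\partial T_1=\partial T_2$. The one genuinely load-bearing point throughout is the passage from separate continuity to an effective ``joint'' statement via the downward induction on the number of smooth slots; the remaining ingredients --- the smoothing lemma and the extension to bounded coefficients --- are routine.
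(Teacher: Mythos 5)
Your proposal is correct and follows essentially the same route as the paper: an induction that smooths the functions $f,g^1,\dotsc,g^k$ one at a time using Lemma \ref{smoothdense} together with the separate continuity of precurrents (Corollary \ref{separatecontinuity}), with the identical argument applied to the boundaries. The only difference is your explicit final step identifying the representing $k$-vector fields via Remark \ref{uniquevectorfield}, which the paper leaves implicit.
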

\begin{proof}
Suppose there is a number $j$, $0\leq j\leq k+1$, such that equation \eqref{evaleq} holds whenever $\omega= f\, dg^1\wedge\dotsb\wedge dg^k$, where  $g^m\in\smooth(\BBG)$ whenever $j\leq m \leq k$  (letting $g^0=f$).  We claim then that the same is true for $j+1$.  Indeed, by Lemma \ref{smoothdense}, there is a sequence of smooth functions $g^j_i$ converging to $g^j$ in $\liploc(\BBG)$.  Then by Corollary \ref{separatecontinuity}, we have
\begin{equation*}
\partial T(f\wedge dg^1\wedge\dotsb\wedge dg^k) =
\lim_{i\rightarrow\infty} T(df\wedge dg^1\wedge\dotsb\wedge dg^j_i \dotsb \wedge dg^k)= 0\text{.}
\end{equation*}
The result now follows by induction on $j$, as the case $j=0$ is true by hypothesis, and the case $j=k+1$ is a restatement of the corollary.  The last statement is proved with the same argument.
\end{proof}


\section{Invariant currents.}
\label{invariantcurrents}

In this section we prove Proposition \ref{currentsk}, which characterizes translation invariant currents.  
From the definition, a precurrent $T=T_{\tilde{\lambda}}$ is invariant if and only if $\tilde{\lambda}\circ \tau_p = \tilde{\lambda}$ almost everywhere, which in turn occurs if and only if $\tilde{\lambda}$ is 
constant almost everywhere.  

To formulate Proposition \ref{currentsk}, we will need the notion of a  ``vertical form''.  We will call a differential $1$-form $\theta\in\diffforms{1}(\BBG)$ \textit{vertical} if $T\restrict{\theta}=0$ for every $k$-precurrent $T$.  Equivalently, $\theta$ is vertical if and only if $\theta$ annihilates every horizontal vector field, i.e., $\langle\theta,X^u\rangle\equiv 0$ for every $u \in H$.

\begin{example}
\label{heisvert}
In the $n^{\text{th}}$ Heisenberg group $\heis{n}$, the  basis $X_1$,\ldots,$X_n$,$Y_1$,\ldots, $Y_n$, $Z$ has a dual basis  consisting of forms $dx_1$,\ldots, $dx_n$, $dy_1$,\ldots,$dy_n$, $\theta$.  The form $\theta$ is a vertical form, as it vanishes when paired with every horizontal vector field.  $\theta$ is sometimes called the \textit{contact form}, as $(\heis{n},\theta)$ is a \textit{contact manifold}, meaning that $\theta\wedge (d\theta)^{\wedge n}$ is a volume form on $\heis{n}$.

It can be shown \cite[Section 2]{Rumin}
that $\theta$ and $d\theta$ generate $\diffforms{k}(\heis{n})$ for $k>n$; that is, every $\omega \in \diffforms{k}(\heis{n})$ has the form $\omega = \alpha\wedge \theta +\beta\wedge d\theta$.
\end{example}

The following lemma describes the push-forwards of an invariant precurrent along the dilation maps $\Delta_r$.

\begin{lemma}
\label{dilate}
Let $T=T_{\tilde{\lambda}}$ be an invariant $k$-precurrent.  Then $\Delta_{r\#}T = r^{k-Q} T$.
\end{lemma}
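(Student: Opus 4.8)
The plan is to check the asserted identity by evaluating both sides on an arbitrary simple form $f\,dg^1\wedge\dotsb\wedge dg^k\in\formsc{k}(\BBG)$, since a precurrent (like a current of locally finite mass) is determined by its values on $\formsc{k}(\BBG)$. The one structural fact I would use about the hypothesis is the remark opening this section: invariance of $T=T_{\tilde\lambda}$ forces $\tilde\lambda$ to be constant almost everywhere, say $\tilde\lambda_p\equiv\tilde\lambda_0$. This is exactly what will make the change of variables below go through without the vector field acquiring any transformation factor.

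First I would unwind the pushforward. Since $\Delta_r$ is a diffeomorphism, it is a proper (in fact $r$-Lipschitz) map, so $\Delta_{r\#}T$ is given by the usual formula
\begin{equation*}
\Delta_{r\#}T(f\,dg^1\wedge\dotsb\wedge dg^k)=T\bigl((f\circ\Delta_r)\,d(g^1\circ\Delta_r)\wedge\dotsb\wedge d(g^k\circ\Delta_r)\bigr).
\end{equation*}
Applying the integral representation \eqref{weakcarnot} of $T_{\tilde\lambda}$ and then the dilation identity \eqref{pansudilation}, $\dpansu(g^i\circ\Delta_r)_p=r\,\dpansu g^i_{\Delta_r(p)}$, each of the $k$ Pansu differentials contributes one factor of $r$, so that
\begin{equation*}
\Delta_{r\#}T(f\,dg^1\wedge\dotsb\wedge dg^k)=r^{k}\int_\BBG\bigl\langle f(\Delta_r(p))\,\dpansu g^1_{\Delta_r(p)}\wedge\dotsb\wedge\dpansu g^k_{\Delta_r(p)},\tilde\lambda_0\bigr\rangle\,d\mu(p).
\end{equation*}

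Finally I would substitute $q=\Delta_r(p)$. Setting $H(q)=\langle f(q)\,\dpansu g^1_q\wedge\dotsb\wedge\dpansu g^k_q,\tilde\lambda_0\rangle$, the integral in the last display is $\int_\BBG(H\circ\Delta_r)\,d\mu=\int_\BBG H\,d(\Delta_{r\#}\mu)$, which by \eqref{pushforwardcarnot} equals $r^{-Q}\int_\BBG H\,d\mu=r^{-Q}\,T(f\,dg^1\wedge\dotsb\wedge dg^k)$; this is valid because $f\circ\Delta_r$ has compact support and $\tilde\lambda_0$ is a fixed, hence locally integrable, $k$-vector. Since $\tilde\lambda$ is constant, the value $\tilde\lambda_0$ appearing in $\Delta_{r\#}T$ is exactly the value needed to read off $T(f\,dg^1\wedge\dotsb\wedge dg^k)$ after the substitution, which is precisely where invariance is used. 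Combining the two factors yields $\Delta_{r\#}T(f\,dg^1\wedge\dotsb\wedge dg^k)=r^{k}r^{-Q}\,T(f\,dg^1\wedge\dotsb\wedge dg^k)=r^{k-Q}\,T(f\,dg^1\wedge\dotsb\wedge dg^k)$, as claimed. The only thing that really needs care — the \emph{main obstacle}, though a mild one — is the bookkeeping of exponents: tracking the $r^{k}$ from the $k$ Pansu differentials together with the $r^{-Q}$ from the Jacobian of $\Delta_r$ and confirming that they combine to $r^{k-Q}$.
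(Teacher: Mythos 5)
Your proof is correct and follows essentially the same route as the paper: unwind the pushforward via the integral representation \eqref{weakcarnot}, extract $r^k$ from the $k$ Pansu differentials using \eqref{pansudilation}, and then change variables using $\Delta_{r\#}\mu=r^{-Q}\mu$ from \eqref{pushforwardcarnot}. Your explicit remark that constancy of $\tilde{\lambda}$ is what lets the change of variables close up is a point the paper leaves implicit, but the argument is otherwise identical.
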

\begin{proof}
We compute, via equations \eqref{pushforwardcarnot} and \eqref{pansudilation},
\begin{align*}
&\Delta_{r\#}T(f\,dg^1\wedge\dotsb\wedge dg^k)
= \int_\BBG \langle  f(\Delta_r(p)) \, \dpansu (g^1\circ \Delta_r)_p\wedge \dotsb\wedge \dpansu (g^k\circ\Delta_r)_p,\tilde{\lambda}\rangle \, d\mu(p)\\
&=r^k\int_\BBG \langle f(\Delta_r(p)) \, \dpansu g^1_{\Delta_r(p)} \wedge \dotsb\wedge  \dpansu g^k_{\Delta_r(p)},\tilde{\lambda}\rangle \, d\mu(p)\\
&= r^k\int_\BBG \langle  f(p) \, \dpansu g^1_p \wedge \dotsb\wedge  \dpansu g^k_p,\tilde{\lambda}\rangle \, d\Delta_{r\#}\mu(p)\\
&= r^{k-Q}\int_\BBG \langle  f(p) \, \dpansu g^1_p \wedge \dotsb\wedge  \dpansu g^k_p,\tilde{\lambda}\rangle \, d\mu(p)
= r^{k-Q}T(f\,dg^1\wedge\dotsb\wedge dg^k)\text{.}
\end{align*}
\end{proof}

\begin{proposition}
\label{currentsk}
Let $T$ be an invariant $k$-precurrent in a Carnot group $\BBG$.  The following statements are equivalent:
\begin{enumerate}
\item $T$ is a current.
\item $\partial T = 0$.
\item $T\restrict{d\theta}=0$ for every vertical $1$-form $\theta\in \diffforms{k}(\BBG)$.
\item $T\restrict{d\theta}=0$ for every invariant vertical $1$-form $\theta\in \diffforms{k}(\BBG)$.
\end{enumerate}
\end{proposition}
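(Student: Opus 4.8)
The plan is to establish the cycle $(2)\Rightarrow(1)$, $(1)\Rightarrow(4)$, $(4)\Rightarrow(2)$, $(4)\Rightarrow(3)$, $(3)\Rightarrow(4)$. Of these, $(3)\Rightarrow(4)$ is trivial since invariant vertical $1$-forms are vertical, and $(2)\Rightarrow(1)$ is immediate from Proposition \ref{weaknormal}: the hypothesis $\partial T=0$ makes $\partial T$ a (trivial) $(k-1)$-precurrent, so $T\in\normalloc{k}(\BBG)$ and in particular $T$ is a current. The backbone of the rest is the Stokes-type identity $\partial(T\restrict{\eta})=T\restrict{d\eta}-(\partial T)\restrict{\eta}$, valid for every $k$-precurrent $T$ and every smooth $1$-form $\eta\in\diffforms{1}(\BBG)$; I would prove it by writing $\eta=\sum_i a_i\,dx^i$ in global exponential coordinates on $\BBG$, evaluating both sides on a test form $\phi\,d\psi^1\wedge\dotsb\wedge d\psi^{k-2}$, and applying the Leibniz rule \eqref{leibnizeq} (which holds for precurrents by the product rule for $\dpansu$) together with $d\eta=\sum_i da_i\wedge dx^i$ and the conventions of Definition \ref{smoothrestrictions} --- a routine computation.

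When $\eta=\theta$ is an invariant vertical $1$-form, $T\restrict{\theta}=0$ by the definition of verticality, so the identity becomes $T\restrict{d\theta}=(\partial T)\restrict{\theta}$; in particular $(2)\Rightarrow(4)$ is immediate. For $(4)\Rightarrow(3)$, decompose an arbitrary vertical $1$-form as $\theta'=\sum_i\beta_i\theta^i$, with $\beta_i\in\smooth(\BBG)$ and the $\theta^i$ the left-invariant coframe dual to a basis of $\mathfrak v=V_2\oplus\dotsb\oplus V_m$; then $d\theta'=\sum_i(d\beta_i\wedge\theta^i+\beta_i\,d\theta^i)$ and $T\restrict{d\theta'}=\sum_i\bigl((T\restrict{d\beta_i})\restrict{\theta^i}+(T\restrict{d\theta^i})\restrict{\beta_i}\bigr)$, in which the first summands vanish because each $\theta^i$ is vertical and a restriction of a precurrent is a precurrent, and the second vanish by $(4)$. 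For $(4)\Rightarrow(2)$, the identity turns $(4)$ into $(\partial T)\restrict{\theta}=0$ for every invariant vertical $\theta$, hence, by the same decomposition, for every vertical $\theta'$; to conclude $\partial T=0$ I would test $\partial T$ on smooth forms --- legitimate by Corollaries \ref{separatecontinuity} and \ref{smootheval} --- and integrate by parts along the horizontal one-parameter subgroups exactly as in the proof of Lemma \ref{noboundary}, which rewrites $\partial T(f\,dg^1\wedge\dotsb\wedge dg^{k-1})$ purely in terms of the horizontal brackets $[u_a,u_b]\in V_2$ contracted into $\tilde\lambda$; those contractions are precisely what the restrictions $T\restrict{d\theta^q}$ by a basis $\{\theta^q\}$ of invariant weight-two vertical $1$-forms detect, so $(4)$ forces $\partial T=0$.

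It remains to prove $(1)\Rightarrow(4)$, which I expect to contain the only real difficulty. Let $T=T_{\tilde\lambda}$ be a current and $\theta$ an invariant vertical $1$-form; by linearity assume $\theta\in V_j^*$ for a single $j\in\{2,\dotsc,m\}$. If $j\ge 3$ then $T\restrict{d\theta}=0$ for free, even without using that $T$ is a current: since $d\theta(e_a,e_b)=-\theta([e_a,e_b])$ is supported on pairs of strata of total weight $j$, every term of $d\theta$ carries a coframe factor from some $V_b^*$ with $b\ge 2$, which annihilates the purely horizontal $k$-vector $\tilde\lambda$ under contraction. The case $j=2$ is the decisive one: $d\theta$ is then a genuine horizontal $2$-form (the symplectic form $-\sum_i dx^i\wedge dy^i$ in $\heis{n}$), $T\restrict{d\theta}$ scales under $\Delta_r$ exactly as Lemma \ref{dilate} predicts for a $(k-2)$-precurrent --- so the dilation calculus yields no information --- and the continuity axiom has to be used in an essential way. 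I would argue by contradiction: assuming $T\restrict{d\theta}\ne 0$ for some invariant weight-two vertical $\theta$, and using that such a $\theta$ differs by an exact form from a concrete non-closed $1$-form $\eta$ supplied by the group law (so that $d\theta=d\eta$), construct a sequence of smooth $k$-forms converging in $\formsc{k}(\BBG)$ along which $T$ fails to converge --- the obstruction being exactly the failure of weak-$*$ continuity of the Jacobian $dg^1\wedge\dotsb\wedge dg^k$ forced by $[V_1,V_1]=V_2$, the same phenomenon behind the Ambrosio--Kirchheim theorem that $\heis{n}$ admits no nonzero rectifiable $k$-currents for $k>n$. This weight-two continuity-failure step is the one genuinely non-formal ingredient; granting it, $(1)\Rightarrow(4)$ holds, and together with $(2)\Rightarrow(1)$, $(2)\Rightarrow(4)$, $(4)\Rightarrow(2)$, $(4)\Rightarrow(3)$, and $(3)\Rightarrow(4)$ all four conditions are equivalent.
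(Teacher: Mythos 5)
Your implications $(2)\Rightarrow(1)$, $(2)\Rightarrow(4)$, $(4)\Rightarrow(3)$, $(3)\Rightarrow(4)$, and $(4)\Rightarrow(2)$ are all sound and track the paper's own arguments closely (the Stokes-type identity you invoke is exactly what underlies the paper's $2\Rightarrow 3$ via the Leibniz rule, and your $(4)\Rightarrow(2)$ is the paper's boundary computation \eqref{boundarycomputation} showing $\partial T$ is integration against an invariant $(k-1)$-vector field in $V_2\wedge\bigl(\bigwedge^{k-2}H\bigr)$). Your reduction of $(1)\Rightarrow(4)$ to the weight-two case is also correct: for $\theta\in V_j^*$ with $j\ge 3$ the restriction of $d\theta$ to $H\times H$ vanishes, so $T\restrict{d\theta}=0$ without using continuity.

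The genuine gap is the weight-two step, which you explicitly leave as ``granting it'': you propose to assume $T\restrict{d\theta}\neq 0$ and ``construct a sequence of smooth $k$-forms converging in $\formsc{k}(\BBG)$ along which $T$ fails to converge,'' but no such construction is given, and this is precisely the content of the proposition --- without it you have only shown $(2)\Rightarrow(1)$, not $(1)\Rightarrow(2)$, so the equivalence of $(1)$ with the rest is not established. The paper closes this gap by a different and complete argument, proving $1\Rightarrow 2$ directly: to show $T(dg^1\wedge\dotsb\wedge dg^k)=0$, set $g^i_\epsilon=\epsilon\, g^i\circ\Delta_{1/\epsilon}$ (same Lipschitz constant, sup norm $O(\epsilon)$) and sum over a maximal $4\epsilon R$-separated net $N\subset B_{R/2}$ of disjointly supported translates; invariance and Lemma \ref{dilate} give
\begin{equation*}
T(d\hat g^1_\epsilon\wedge\dotsb\wedge d\hat g^k_\epsilon)=\#N\cdot\epsilon^{k}\,\Delta_{\frac{1}{\epsilon}\#}T(dg^1\wedge\dotsb\wedge dg^k)=\#N\cdot\epsilon^{Q}\,T(dg^1\wedge\dotsb\wedge dg^k)\ge C\,T(dg^1\wedge\dotsb\wedge dg^k)
\end{equation*}
with $C>0$ independent of $\epsilon$, while the left side tends to $0$ by the continuity axiom since $\hat g^i_\epsilon\to 0$ in $\liploc(\BBG)$. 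Your remark that ``the dilation calculus yields no information'' is true for $T\restrict{d\theta}$ itself (which scales like a $(k-2)$-precurrent), but it misses that the calculus \emph{does} bite when applied to $\partial T$: a boundary term $T(dg^1\wedge\dotsb\wedge dg^k)$ carries $k$ differentials against a $(k-1)$-dimensional pairing, and that mismatch of one power of $\epsilon$, amplified by the $\epsilon^{-Q}$ translates one can pack, is exactly what forces $\partial T=0$. Once $\partial T=0$ is in hand, $(4)$ follows from your own Stokes identity, so redirecting your scaling argument from $T\restrict{d\theta}$ to $\partial T$ is the missing idea.
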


\begin{proof}
We prove $1\Leftrightarrow 2$, and $ 2\Rightarrow 3 \Rightarrow 4\Rightarrow 2$.

$1\Rightarrow 2$:  Suppose $T$ is an invariant $k$-current.  We wish to show that $\partial T=0$, that is, for $g^1\,dg^2\wedge\dotsb\wedge dg^k\in\formsc{k-1}(X)$,  $T(dg^1\wedge\dotsb\wedge dg^k)=0$.  

We may assume without loss of generality that $T(dg^1\wedge\dotsb\wedge dg^k)\geq 0$.  We also assume 
that each function $g_i$ has compact support, and hence all of the functions are supported in some ball $B_R(\gunit)$ centered at the identity.  

For every $\epsilon>0$, we define the rescaled functions $g^i_\epsilon$ by
\begin{equation*}
g^i_\epsilon(p)=\epsilon g^i\circ\Delta_{\frac{1}{\epsilon}} \text{,}
\end{equation*}
and note that $g^i_\epsilon$ is supported on $B_{R/\epsilon}$ and has the same Lipschitz constant as $g^i$. 
Also, we let $N\subset B_{R/2}(0)$ be a maximal $4 \epsilon R$-separated subset of the ball $B_{R/2}(0)$.  By the $Q$-regularity of $\BBG$, $\#N\geq C\epsilon^{-Q}$.  We define the functions $\hat{g}^i_\epsilon$ by
\begin{equation*}
\hat{g}^i_\epsilon = \sum_{p\in N} g^i_\epsilon\circ\tau_p\text{.}
\end{equation*}
Again, we note that $\hat{g}^i_\epsilon$ has the same Lipschitz constant as $g^i$.  Moreover, for $p, q\in N$, $p\neq q$, we have $\supp(g_\epsilon^i\circ\tau_p)\cap\supp(g_\epsilon^j\circ\tau_p)=\emptyset$, and so by 
the invariance of $T$ under left translations,
\begin{equation*}
T(d g^1_\epsilon\circ\tau_{p_1} \wedge\dotsb\wedge  d g^k_\epsilon\circ\tau_{p_k})=
\begin{cases} 
T(d g^1_\epsilon \wedge\dotsb\wedge  d g^k_\epsilon)& \text{if $p_1=\dotsb=p_k$,}
\\
0 &\text{otherwise.}
\end{cases}
\end{equation*}
But now, with the help of Lemma \ref{dilate}, we compute 
\begin{align*}
&T(d\hat{g}^1_\epsilon\wedge\dotsb\wedge d\hat{g}^k_\epsilon) = \sum_{p\in N} T(d g^1_\epsilon \wedge\dotsb\wedge  d g^k_\epsilon)
= \# N \cdot T(d g^1_\epsilon \wedge\dotsb\wedge  d g^k_\epsilon)\\
&=\# N \cdot \epsilon^k T(d (g^1\circ\Delta_{\frac{1}{\epsilon}}) \wedge\dotsb\wedge  d( g^k\circ\Delta_{\frac{1}{\epsilon}}))
= \# N \cdot \epsilon^k \Delta_{\frac{1}{\epsilon}\#}T(d g^1 \wedge\dotsb\wedge  d g^k)\\
&= \# N \cdot \epsilon^Q T(d g^1 \wedge\dotsb\wedge  d g^k)
\geq C T(d g^1 \wedge\dotsb\wedge  d g^k)\text{.}
\end{align*}

As $\epsilon$ approaches $0$, the functions $\hat{g}^i_\epsilon$ converge to $0$ uniformly and with bounded Lipschitz constant, so $C T(d g^1 \wedge\dotsb\wedge  d g^k)$ must approach $0$ by continuity of $T$.  Since the functions $g^i$ are independent of $\epsilon$, $T(d g^1 \wedge\dotsb\wedge  d g^k)=0$.

$2\Rightarrow 1$:  This follows immediately from Proposition \ref{weaknormal}.

$2\Rightarrow 3$:  If $\partial T=0$, then for any $f\,dg^1\wedge\dotsb\wedge dg^{k-2} \in \smoothformsc{k-2}(\BBG)$ and any vertical $\theta\in \diffforms{1}(\BBG)$, we have
\begin{align*}
&\phantom{=}\;T\restrict{d\theta}(f\,dg^1\wedge\dotsb\wedge dg^{k-2}) = T(f\, d\theta \wedge dg^1\wedge\dotsb\wedge dg^{k-2})\\
&= T(d(f\theta) \wedge dg^1\wedge\dotsb\wedge dg^{k-2})-T(df\wedge \theta \wedge dg^1\wedge\dotsb\wedge dg^{k-2})\\
&= \partial T(f\theta\wedge dg^1\wedge\dotsb\wedge dg^{k-2}) + T\restrict{\theta}(df\wedge dg^1\wedge\dotsb\wedge dg^{k-2})
= 0\text{.}
\end{align*}
Here the second term vanishes because $\theta$ is vertical.

$3\Rightarrow 4$ is clear.

$4\Rightarrow 2$:  Let $T=T_{\tilde{\lambda}}=\sum_{a\in\Lambda(k,n)} T_{\lambda^a \tilde{u}_a}$, where here, since $T$ is invariant, each $\lambda^a$ is constant.  
For $\omega=f\,dg^1\wedge \dotsb\wedge dg^k\in \smoothformsc{k}(\BBG)$, and each $a\in\Lambda(k,n)$, we compute  
\begin{align}
&\partial T_{\tilde{u}_a}(\omega) = T_{\tilde{u}_a}(d\omega)
= \int_{\BBG} \langle  d\omega,\tilde{u}_a \rangle \notag\\
=& \int_{\BBG} \sum_{i=1}^k (-1)^{i-1} X^{u_{a_i}} \left(\langle \omega,u_{a_1}\wedge\dotsb\wedge \widehat{u_{a_i}}\wedge\dotsb\wedge u_{a_k}\rangle\right)\,d\mu\notag\\
  &+ \int_{\BBG} \sum_{i<j} (-1)^{i+j}\langle\omega,[u_{a_i},u_{a_j}]\wedge u_{a_1} \dotsb \wedge \widehat{u_{a_i}}\wedge\dotsb\wedge \widehat{u_{a_j}}\wedge \dotsb\wedge u_{a_k}\rangle d\mu \label{boundarycomputation}\\
=&  \sum_{i=1}^k (-1)^{i-1}  \partial T_{u_{a_i}}\left(\langle \omega,u_{a_1}\wedge\dotsb\wedge \widehat{u_{a_i}}\wedge\dotsb\wedge u_{a_k}\rangle\right)\notag\\
  &+ \sum_{i<j} (-1)^{i+j} \int_\BBG \langle\omega,[u_{a_i},u_{a_j}]\wedge u_{a_1} \dotsb \wedge \widehat{u_{a_i}}\wedge\dotsb\wedge \widehat{u_{a_j}}\wedge \dotsb\wedge u_{a_k}\rangle d\mu \text{.}\notag
\end{align}

See \cite[Proposition 3.2]{LangS}, e.g., for the expansion in the second and third lines of \eqref{boundarycomputation}; here, as in \cite{LangS}, the symbol ``$\widehat{\phantom{X}}$'' above a vector means that vector should be omitted.)

By Lemma \ref{noboundary}, $\partial T_{u_{a_i}}=0$ for all $i$, so the first sum in the last line vanishes.  Since $[u_{a_i},u_{a_j}]\in V_2$ for all $i$ and $j$, expanding the second sum in the last line shows that
the boundary $\partial T_{\tilde{u}_a}$ satisfies
\begin{equation}
\label{boundarycomponentk}
\partial T_{\tilde{u}_a} (\omega) =  \sum_{b \in \Lambda_{n,k-2}} \int_\BBG  \langle\omega,v_b \wedge u_{b_1} \wedge \dotsb \wedge u_{b_{k-2}}\rangle \text{,} 
\end{equation}
where each $v_b =v_b(a) \in V_2$.  Again, this holds for every $\omega\in\smoothformsc{k}(\BBG)$.  Of course, the vectors $v_b$ do not depend on the choice of $\omega$. 

Since $T=\sum_{b\in\Lambda (k,n)} \lambda^b T_{\tilde{u}_b}$,  and the boundary operator is linear, 
it follows that $\partial T(\omega)$ can be written in the form of equation \eqref{boundarycomponentk}.  
\begin{equation}
\label{boundaryk}
\partial T (\omega) =  \sum_{b \in \Lambda_{n,k-2}} \int_\BBG  \langle \omega,v_b \wedge u_{b_1} \wedge \dotsb \wedge u_{b_{k-2}}\rangle \text{.} 
\end{equation}
Thus $\partial T$, when applied to a smooth form, is given by integration against an invariant $(k-1)$-vector field in $V_2\wedge \left(\bigwedge^{k-2}H\right)$.

Now suppose $T\restrict{d\theta}=0$ for every smooth invariant $1$-form $\theta\in\diffforms{1}(\BBG)$, and recall that $\{\dpansu \pi^1,\dotsc,\dpansu \pi^n\}$ is the dual basis to $\{u_1,\dotsc,u_n\}$.
Then for every such $\theta$,
every $a\in \Lambda_{n,k-2}$, and every $f\in\smoothc(\BBG)$, we have
\begin{align*}
&0 = T\restrict{d\theta}\restrict{\dpansu \pi^{a_1}\wedge \dotsb \wedge \dpansu \pi^{a_{k-2}}}(f)
= (\partial T\restrict{\theta}+\partial(T\restrict{\theta}))\restrict{\dpansu \pi^{a_1}\wedge \dotsb \wedge \dpansu \pi^{a_{k-2}}}(f)\\
&= \partial T(f \theta\wedge \dpansu \pi^{a_1}\wedge \dotsb \wedge  \dpansu \pi^{a_{k-2}})\\
&= \sum_{b \in \Lambda_{n,k-2}}  \int_\BBG f\langle \theta \wedge \driem \pi^{a_1}\wedge \dotsb \wedge \driem \pi^{a_{k-2}},v_b \wedge u_{b_1} \wedge \dotsb \wedge u_{b_{k-2}}\rangle \,d\mu\\
&= \sum_{b \in \Lambda_{n,k-2}}  \int_\BBG f \delta_a^b \langle \theta,v_b\rangle
= \int_\BBG f  \langle \theta,v_a\rangle\text{.}
\end{align*}
Here $\delta_a^b=1$ if and only if $a=b$, and $0$ otherwise.  Since this holds for all $f$, and in particular, any nonzero, nowhere negative $f\in\smoothc(\BBG)$, we have $\langle \theta,v_a\rangle=0$.
If $v_a\neq 0$, then $v_a \notin H$, so there is an invariant vertical $1$-form $\theta$ such that $ \theta,\langle v_a \rangle \neq 0$, a contradiction. Thus $v_a=0$.
This holds for all $a\in \Lambda(k,n)$, so by equation \eqref{boundaryk}, we have $\partial T(\omega)=0$ for all $\omega\in \smoothforms{k-1}(\BBG)$.
Therefore, by Corollary \ref{smootheval}, $\partial T = 0$.
\end{proof}

\section{General currents in Carnot groups.}
\label{generalcarnotcurrents}

In this section we prove Theorem \ref{characterization}.  We need to relate arbitrary precurrents to invariant ones, and so we introduce a kind of tangent approximation.  Let $T=T_{\tilde{\lambda}}$ be a $k$-precurrent.  At a given point $p\in \BBG$, we define the current $T^p$ by the equation
\begin{equation*}
T^p = T_{\tilde{\lambda}_p}\text{.}
\end{equation*}
Note that $T^p$ is well-defined up to sets of measure $0$.


\begin{lemma}
\label{pointwisecurrent}
A $k$-precurrent $T$ is a current if and only $T^p$ is a current for almost every $p\in \BBG$. 
\end{lemma}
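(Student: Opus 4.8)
The plan is to derive Lemma \ref{pointwisecurrent} from a single soft approximation principle that makes precise why $L^{1}$-approximation of the defining $k$-vector field is strong enough to transfer the continuity axiom. Throughout write $T=T_{\tilde\lambda}$ with $\tilde\lambda=\sum_{a\in\Lambda_{k,n}}\lambda^{a}\tilde u_{a}$ as in \eqref{kvectexp}, and recall that $T^{p}=T_{\tilde\lambda_{p}}$ is the \emph{invariant} precurrent whose constant $k$-vector field is the value $\tilde\lambda(p)$.

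\emph{Approximation principle.} First I would prove: if $\tilde\nu_{j},\tilde\nu$ are locally integrable $k$-vector fields with $\tilde\nu_{j}\to\tilde\nu$ in $L^{1}_{\mathrm{loc}}(\mu)$ and every $T_{\tilde\nu_{j}}$ is a current, then $T_{\tilde\nu}$ is a current. Linearity and locality hold for any precurrent, so only continuity is at issue. Let $\omega_{i}=f_{i}\,dg^{1}_{i}\wedge\dotsb\wedge dg^{k}_{i}$ converge to $\omega=f\,dg^{1}\wedge\dotsb\wedge dg^{k}$ in $\formsc{k}(\BBG)$, and fix $M,N$ and a compact $K$ with $\|f_{i}\|_{\infty},\|f\|_{\infty}\le M$, $L(g^{l}_{i}),L(g^{l})\le N$ and $\supp f_{i},\supp f\subseteq K$ for all $i$. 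The mass estimate from the proof of Lemma \ref{prefinitemass} (cf.\ \eqref{massestimate}) gives, for a constant $C_{n,k}$ depending only on $n$ and $k$ and for every $\eta$ in $\{\omega_{i}:i\in\BBN\}\cup\{\omega\}$,
\begin{equation*}
|T_{\tilde\nu-\tilde\nu_{j}}(\eta)|\le C_{n,k}\,N^{k}M\int_{K}\textstyle\sum_{a}|\nu^{a}-\nu^{a}_{j}|\,d\mu=:\varepsilon_{j},
\end{equation*}
with $\varepsilon_{j}\to0$ \emph{independently of $i$}. Hence $|T_{\tilde\nu}(\omega_{i})-T_{\tilde\nu}(\omega)|\le 2\varepsilon_{j}+|T_{\tilde\nu_{j}}(\omega_{i})-T_{\tilde\nu_{j}}(\omega)|$; choosing $j$ large and then $i$ large (using continuity of the current $T_{\tilde\nu_{j}}$) yields continuity of $T_{\tilde\nu}$. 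I would note that this does not conflict with Proposition \ref{approximationlemma}, whose approximants are not close to $T$ in any sense controlled by an $L^{1}$ norm of a vector field.

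\emph{Sufficiency.} Suppose $T^{p}$ is a current for $\mu$-a.e.\ $p$, and let $E$ be the full-measure set of $p$ that are Lebesgue points of $\tilde\lambda$ and at which $T^{p}$ is a current. For each $\delta>0$, the Lebesgue differentiation and Vitali covering theorems in the doubling space $(\BBG,\cdist,\mu)$ produce finitely many pairwise disjoint balls $B_{l}=B_{r_{l}}(p_{l})$ with $r_{l}<\delta$ and $p_{l}\in E$ such that, setting $\tilde\lambda_{\delta}=\sum_{l}\tilde\lambda(p_{l})\charfcn_{B_{l}}$, one has $\tilde\lambda_{\delta}\to\tilde\lambda$ in $L^{1}_{\mathrm{loc}}(\mu)$ as $\delta\to0$. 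Since $T_{\tilde\lambda_{\delta}}=\sum_{l}T^{p_{l}}\restrict{B_{l}}$ is a finite sum of restrictions (Definition \ref{restriction}) of the currents $T^{p_{l}}$, it is a current, and the approximation principle shows $T=T_{\tilde\lambda}$ is a current.

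\emph{Necessity.} Suppose $T$ is a current and let $p$ be a Lebesgue point of $\tilde\lambda$. Pushing forward by the isometry $\tau_{p^{-1}}$ and the dilation $\Delta_{1/r}$ (Definition \ref{pushforwarddef}) and rescaling, the computation proving Lemma \ref{dilate}, carried out without the invariance hypothesis and using the identities $\tau_{q\#}T_{\tilde\nu}=T_{\tilde\nu\circ\tau_{q^{-1}}}$ and $\Delta_{s\#}T_{\tilde\nu}=s^{Q-k}T_{\tilde\nu\circ\Delta_{1/s}}$, shows that $T_{r}:=r^{k-Q}\,\Delta_{1/r\,\#}\,\tau_{p^{-1}\#}T=T_{\tilde\lambda(p\cdot\Delta_{r}(\,\cdot\,))}$ is a current for every $r>0$. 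Since $p$ is a Lebesgue point and $\mu$ is $Q$-regular with $\Delta_{r\#}\mu=r^{-Q}\mu$ (see \eqref{pushforwardcarnot}), we get $\tilde\lambda(p\cdot\Delta_{r}(\,\cdot\,))\to\tilde\lambda(p)$ in $L^{1}_{\mathrm{loc}}(\mu)$ as $r\to0$; applying the approximation principle with the constant field $\tilde\lambda(p)$ shows $T^{p}$ is a current. As this holds for a.e.\ $p$, the proof is complete.

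I expect the conceptual crux to be the approximation principle — specifically the observation that the precurrent mass bound \eqref{massestimate} is \emph{linear} in the $L^{1}$ norm of the vector field, which is exactly what allows continuity to pass to an $L^{1}_{\mathrm{loc}}$-limit despite the failure of any naive closure theorem. The two approximation schemes are then routine, the only care needed being the Lebesgue-point/Vitali argument in the sufficiency direction and the left-invariance and dilation identities for push-forwards of precurrents in the necessity direction. Once this lemma is available, Theorem \ref{characterization} should follow by combining it with the equivalence ``$T^{p}$ is a current $\iff\partial T^{p}=0$'' furnished by Proposition \ref{currentsk} and the boundary formula \eqref{boundaryk}.
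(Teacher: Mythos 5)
Your proof is correct and follows essentially the same route as the paper: Lebesgue points plus a Vitali covering for sufficiency, centered dilations for necessity, with your $L^1_{\mathrm{loc}}$ approximation principle playing exactly the role of the paper's local convergence in mass combined with completeness of currents in the mass norm. (One small slip: the dilation identity should read $\Delta_{s\#}T_{\tilde\nu}=s^{k-Q}T_{\tilde\nu\circ\Delta_{1/s}}$, consistent with Lemma \ref{dilate} in the invariant case; your normalization $T_r=r^{k-Q}\,\Delta_{1/r\,\#}\,\tau_{p^{-1}\#}T$ is the one that matches this corrected exponent, so the final formula $T_r=T_{\tilde\lambda(p\cdot\Delta_r(\cdot))}$ still holds.)
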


\begin{proof}
Let $T=T_{\tilde{\lambda}}$, and suppose first that for almost every $p\in \BBG$, $T^p$ is a current. Now suppose that $p$ is a Lebesgue point of each function $\lambda^a$ for $a\in\Lambda(k,n)$. Note that since each $\lambda^a$ is locally integrable, almost every $p\in\BBG$ satisfies this condition.
For every $\epsilon>0$, there is a number $R=R(\epsilon,p)>0$ such that for $0\leq r\leq R$, we have
\begin{equation*}
\sum_{a\in\Lambda(k,n)}\int_{B_r(p)}|\lambda^a-\lambda^a_p|\,d\mu \leq \epsilon \mu(B_r(p))\text{.}
\end{equation*}
Thus by inequality \eqref{massestimate},
\begin{equation}
\label{invapprox}
||T-T^p||(B_r(p)) \leq \frac{n!}{k!} \sum_{a\in\Lambda(k,n)} \int_{B_r(p)}  |\lambda^a-\lambda^a_p|\,d\mu
\leq \frac{n!}{k!} \epsilon \mu(B_r(p))\text{.}
\end{equation}
Since equation \eqref{invapprox} holds for almost every $p\in\BBG$, and every $r<R(\epsilon,p)$, by the Vitali Covering Theorem, there is a countable pairwise disjoint collection of balls $B_i = B_{r_i}(p_i)$ such that $\mu(\BBG\backslash \bigcup_{i=1}^\infty B_i)=0$, and such that $r_i\leq \min(\epsilon,R(\epsilon,p_i))$.

Let $T_\epsilon = \sum_{i=1}^\infty T^{p_i} \restrict{B_i}$.  We claim this sum converges locally in mass.   Indeed, given a relatively compact subset $U\subset \BBG$, let $U_\epsilon=\{q\in\BBG:\dist(U,q)<\epsilon\}$.  Then
\begin{align*}
&\sum_{i=1}^\infty ||T^{p_i} \restrict{B_i}||(U)
\leq \sum_{p_i\in U_\epsilon} ||T^{p_i} \restrict{B_i}||(U)
\leq \sum_{p_i\in U_\epsilon} ||T^{p_i}||(B_i)\\
&\leq \sum_{p_i\in U_\epsilon} \left(||T||(B_i)+\epsilon\frac{n!}{k!}\mu(B_i)\right)
\leq  ||T||(U_\epsilon)+\epsilon \mu(U_\epsilon)\text{,}
\end{align*}
and so the sum converges.

Moreover, we have
\begin{equation*}
||T_\epsilon-T||(U)
\leq \sum_{p_i\in U_\epsilon} ||T-T^{p_i}||(B_i)
\leq \sum_{p_i\in U_\epsilon} \epsilon\frac{n!}{k!}\mu(B_i)
\leq \epsilon \frac{n!}{k!}\mu(U_\epsilon)\text{.}
\end{equation*}
Thus $T_\epsilon\restrict{U}$ converges to $T\restrict{U}$ in mass as $\epsilon$ approaches $0$.  Since each $T_\epsilon\restrict{U}$ is a current, $T\restrict{U}$ is also a current, by the completeness of the space of currents in the mass norm.  Being a current is a local property (indeed, the continuity axiom is satisfied for $T$ if and only if it is satisfied for $T\restrict{U}$ for every relatively compact open set $U\subset \BBG$), and so $T$ is a current. 

Conversely, suppose that $T$ is a current, and let $p$ be a Lebesgue point for each $\lambda^a$ as above.  Let $\Delta^p_t$ denote the dilation centered at point $p$, that is $\Delta^p_t = \tau_p\circ\Delta_t\circ\tau_{p^{-1}}$. Let $R_0>0$, and let $R=R(\epsilon,p)$ be as above.  Finally, let $s=\frac{R_0}{R}$.

From Lemma \ref{dilate}, and the invariance of $T^p$ under translations, we have
$\Delta^p_{s\#}T = s^{k-Q} T$.  Moreover, since the dilation map $\Delta^p_s$ scales distances precisely by a factor of $s$, it follows from Definition \ref{massdef} that for any $k$-precurrent $T'$, $||\Delta^p_{s\#} T'||(B_(R_0)(p)) = s^k ||T'||(B_R(p))$ (see also \cite[Lemma 4.6 (2)]{Lang} for the same argument applied to currents). Thus by equation \eqref{invapprox},
\begin{align*}
&||s^{Q-k}\Delta^p_{s\#} T - T^p ||(B_{R_0}(p)) =  s^{Q-k}||\Delta^p_{s\#} (T - T^p) ||(B_{R_0}(p))\\
&= s^{Q-k}s^k ||T - T^p||(B_R(p))
\leq s^Q \epsilon \mu(B_R(p))
= \epsilon \mu(B_{R_0}(p)) \text{.}
\end{align*}
The currents $s^{Q-k}\Delta^p_{s\#} T$ therefore converge locally in mass to $T^p$, and so $T^p$ is a current.
\end{proof}

We are now ready to prove Theorem \ref{characterization}.


\begin{proof}[Proof of Theorem \ref{characterization}]
Let $T=T_{\tilde{\lambda}}$ be a $k$-precurrent.  Since by definition, all precurrents satisfy $T(\theta\wedge\alpha) = 0$ for every vertical $\theta\in \diffforms{1}(\BBG)$, equation \eqref{chareq} is equivalent to the condition that $T\restrict{\theta}=0$.

If $T$ is a current, then by Lemma \ref{pointwisecurrent}, $T^p$ is an invariant current for almost every $p\in\BBG$.  Let $T_\epsilon$ be as in the proof of Lemma \ref{pointwisecurrent}.  Then for any $\omega\in\formsc{k-2}(\BBG)$, and any vertical $\theta\in\diffforms{1}(\BBG)$, by Proposition \ref{currentsk} we have
\begin{equation*}
T_\epsilon\restrict{d \theta}(\omega)
=\sum_{i=1}^\infty T^{p_i} \restrict{B_i}\restrict{d\theta}(\omega)\\
=\sum_{i=1}^\infty T^{p_i}\restrict{d\theta} \restrict{B_i}(\omega)\\
= 0\text{.}
\end{equation*}
Since $T_\epsilon$ converges locally in mass, and hence weakly, to $T$, we have $T\restrict{d\theta}(\omega)=0$.

Conversely, suppose $T$ is a $k$-precurrent, with $T\restrict{d\theta}=0$ for every vertical form $\theta$.  Notice that if $\theta$ is vertical, then so is $\Delta^{p*}_t\theta$, since translations and dilations both respect the horizontal bundle.
Thus for all $t>0$, we have
\begin{equation*}
(\Delta^p_{t\#}T)\restrict{d\theta} = \Delta^p_{t\#}(T\restrict{\Delta^{p*}_t d\theta})=  \Delta^p_{t\#}(T\restrict{d(\Delta^{p*}_t \theta)}) =      0\text{.}
\end{equation*}
Since, at almost every $p\in\BBG$, the currents $s^{Q-k}\Delta^p_{s\#} T$ from the second half of Lemma \ref{pointwisecurrent} converge locally in mass to $T^p$, it follows that $T^p\restrict{d\theta}=0$.  Thus by Proposition \ref{currentsk}, $T^p$ is a current, and by the first half of Lemma \ref{pointwisecurrent}, so is $T$. 

For the last statement of the Theorem, suppose that $T\in \currentsloc{k}(\BBG)$, and $\theta\in\diffforms{1}(\BBG)$, is vertical. 
By Proposition \ref{approximationlemma}, $T$ can be approximated weakly by currents $T_\epsilon$ of absolutely continuous mass.  By Theorem \ref{vectormeasure}, each current $T_\epsilon$ is also a precurrent, so that $T_\epsilon\restrict{\theta}=0$, and from the first part of the theorem,  $T_\epsilon\restrict{d\theta}=0$ as well. 
Thus we have
\begin{equation*}
T\restrict{d\theta}(\omega)
= T(d\theta\wedge\omega)
= \lim_{\epsilon\rightarrow 0} T_\epsilon(d\theta\wedge\omega)
= \lim_{\epsilon\rightarrow 0} T_\epsilon\restrict{d\theta}(\omega)
= 0\text{,}
\end{equation*}
and the same computation shows that $T\restrict{\theta}(\omega)=0$.
\end{proof}

\section{Normal currents in Carnot groups}
\label{normalsurjection}
In this section we prove Theorem \ref{normalisomorphism}.  We first establish a preliminary result to allow us to restrict our attention to smooth forms.
\begin{proposition}
\label{smoothextension}
Let $T\colon \smoothforms{k}(\BBG)\rightarrow \BBR$ satisfy the linearity and locality axioms in Definition \ref{currentdef}, with respect to the Carnot-Carath\'eodory metric $\cdist$.  Assume further that $T$ and $\partial T$ satisfy the finite mass condition in Definition \ref{massdef}, where the Lipschitz functions in the definition are required to be smooth.  Then $T$ has a unique extension $\hat{T}\in \normalloc{k}(\BBG)$.
\end{proposition}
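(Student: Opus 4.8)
The plan is to extend $T$ off the smooth forms by density, using the two hypothesized mass bounds as the source of continuity. As a preliminary step I would observe that the hypotheses already yield pointwise control: as in \cite[Theorem 4.3]{Lang} (a purely measure-theoretic argument that goes through verbatim when the forms are required to be smooth), the ``smooth mass'' $\|T\|$ is a locally finite Borel regular outer measure with $|T(f\,dg^1\wedge\dots\wedge dg^k)| \le \prod_{i=1}^k L(g^i|_{\supp f})\int_\BBG|f|\,d\|T\|$ for every smooth $f\,dg^1\wedge\dots\wedge dg^k$, and similarly for $\partial T$ against $\|\partial T\|$. I would also record at this point that $T$ obeys the alternating property and the Leibniz rule on smooth forms, since these follow from multilinearity and locality exactly as in \cite[Proposition 2.4]{Lang}.

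Next, given $\omega = f\,dg^1\wedge\dots\wedge dg^k \in \formsc{k}(\BBG)$, I would use Lemma \ref{smoothdense} to choose smooth $f_j \to f$ in $\lipc(\BBG)$ and smooth $g^i_j \to g^i$ in $\liploc(\BBG)$, with all supports contained in a fixed compact set $K$ and with uniformly bounded sup-norms and Lipschitz constants, and set $\hat T(\omega) := \lim_j T(f_j\,dg^1_j\wedge\dots\wedge dg^k_j)$. The heart of the proof, and the step I expect to be hardest, is showing this limit exists and is independent of the choices. For this I would expand $T(\omega_j) - T(\omega_l)$ telescopically: one term has coefficient $f_j - f_l$ and is bounded by $\int_K|f_j-f_l|\,d\|T\| \to 0$; each remaining term has the shape $T(f_l\,dg^1_l\wedge\dots\wedge d(g^i_j - g^i_l)\wedge\dots\wedge dg^k_j)$, and using the alternating property to bring $d(g^i_j-g^i_l)$ to the front and then the Leibniz rule, it equals $\pm\partial T\big(f_l(g^i_j - g^i_l)\,d(\cdots)\big) \mp T\big((g^i_j-g^i_l)\,df_l\wedge d(\cdots)\big)$; since $g^i_j - g^i_l \to 0$ uniformly on $K$ while $\|T\|(K)$ and $\|\partial T\|(K)$ are finite, both pieces tend to $0$. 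Independence of the approximating sequence follows by interleaving two such sequences. This is the same mechanism as in the proof of Proposition \ref{weaknormal}, and it is precisely here that the hypothesis on $\partial T$ is used.

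It then remains to check that $\hat T$ is a locally normal current. Linearity is immediate. Locality holds because, if $g^i$ is constant on a neighborhood of $\supp f$, one may take the mollified $g^i_j$ still constant on a neighborhood of $\supp f_j$, so each $T(\omega_j)$ vanishes. Running the same telescoping estimate between two arbitrary Lipschitz forms shows $\hat T$ is continuous in the sense of Definition \ref{currentdef} (and in passing that $\|\hat T\| \le \|T\|$, so $\hat T \in \currentsloc{k}(\BBG)$). For the boundary, $\partial\hat T(\phi\,dh^1\wedge\dots\wedge dh^{k-1}) = \hat T(d\phi\wedge dh^1\wedge\dots\wedge dh^{k-1})$; writing this through a cutoff $\sigma \equiv 1$ near $\supp\phi$ and mollifying at a scale fine enough that $\sigma_j \equiv 1$ on $\supp\phi_j$ (so the Leibniz error term $T(\phi_j\,d\sigma_j\wedge\dots)$ vanishes identically), one gets $\partial\hat T(\phi\,dh^1\wedge\dots) = \lim_j\partial T\big((\sigma_j\phi_j)\,dh^1_j\wedge\dots\big)$ and hence the mass bound $|\partial\hat T(\phi\,dh^1\wedge\dots)| \le \prod_i L(h^i|_{\supp\phi})\int_\BBG|\phi|\,d\|\partial T\|$. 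Thus $\|\partial\hat T\|$ is locally finite and $\hat T \in \normalloc{k}(\BBG)$.

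Finally, uniqueness is easy: any two locally normal currents extending $T$ agree on $\smoothforms{k}(\BBG)$, hence, being continuous and by the density of smooth forms from Lemma \ref{smoothdense}, agree on all of $\formsc{k}(\BBG)$, as in Corollary \ref{smootheval}. The main obstacle, as noted, is the Cauchy estimate of the second paragraph: it is the only place where continuity is produced rather than assumed, and it is what forces the locally-finite-mass hypothesis on $\partial T$; a secondary point needing care is the bookkeeping that identifies $\partial\hat T$ with the corresponding extension of $\partial T$, so that the mass bound for $\partial T$ genuinely transfers.
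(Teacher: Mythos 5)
Your proposal is correct and follows essentially the same route as the paper: mollify via Lemma \ref{smoothdense}, establish the Cauchy property of $T(\omega_\epsilon)$ by the telescoping decomposition that converts each $d(g^i_{\epsilon_1}-g^i_{\epsilon_2})$ term into a $\partial T$ term plus a $T$ term via the Leibniz rule, and then verify locality, the mass bounds, continuity, and uniqueness exactly as in the paper's argument (the paper's equation \eqref{cauchyexp} is precisely your key estimate). Your extra care in identifying $\partial\hat{T}$ with the extension of $\partial T$, and in noting that the alternating property and Leibniz rule must first be recorded for $T$ on smooth forms, are sound refinements of the same proof rather than a different approach.
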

\begin{proof}
Let $\omega=f\,dg^1\wedge\dotsb\wedge dg^k\in \formsc{k}(\BBG)$.  Let $\omega_\epsilon= f_\epsilon\,dg^1_\epsilon\wedge\dotsb\wedge dg^k_\epsilon\in \smoothformsc{k}(\BBG)$, where $f_\epsilon$ and $g^j_\epsilon$ are constructed as in the proof of Lemma \ref{smoothdense}.  We claim $T(\omega_\epsilon)$ converges as $\epsilon$ converges to $0$.  Indeed, for every $\epsilon>0$ and $\epsilon_1, \epsilon_2\in (0,\epsilon)$, we have
\begin{align}
&|T(\omega_{\epsilon_1})-T(\omega_{\epsilon_2})|
\leq  |T((f_{\epsilon_1}-f_{\epsilon_2})\,dg^1_{\epsilon_1}\wedge\dotsb\wedge dg^k_{\epsilon_1})| \notag\\
& + \sum_{j=1}^{k}|T(f_{\epsilon_2}\,dg^1_{\epsilon_2}\wedge\dotsb\wedge d(g^j_{\epsilon_1}-g^j_{\epsilon_2})\wedge\dotsb\wedge dg^k_{\epsilon_1})|\notag\\
&= |T((f_{\epsilon_1}-f_{\epsilon_2})\,dg^1_{\epsilon_1}\wedge\dotsb\wedge dg^k_{\epsilon_1})|\label{cauchyexp}\\
&+ \sum_{j=1}^{k} |\partial T(f_{\epsilon_2} (g^j_{\epsilon_1}-g^j_{\epsilon_2}) \,dg^1_{\epsilon_2}\wedge\dotsb\wedge dg^{j-1}_{\epsilon_2}\wedge dg^{j+1}_{\epsilon_1}\wedge\dotsb\wedge dg^k_{\epsilon_1})|\notag\\
&+\sum_{j=1}^k|T((g^j_{\epsilon_1}-g^j_{\epsilon_2})\,dg^1_{\epsilon_2}\wedge\dotsb\wedge df_{\epsilon_2}\wedge\dotsb\wedge dg^k_{\epsilon_1})|
\text{.}\notag
\end{align}
By the finite mass assumption on $T$ and $\partial T$, and the fact that the sequences $\{f_\epsilon\}$ and $\{g^j_\epsilon\}$ converge to $f$ and $g^j$ (respectively), with the same respective Lipschitz constants, each term on the right hand side of equation \eqref{cauchyexp} converges to zero with $\epsilon$, independently of the choice of $\epsilon_1$ and $\epsilon_2$.  Thus $T(\omega_\epsilon)$ converges, and so we let $\hat{T}(\omega)=\lim_{\epsilon\rightarrow 0} T(\omega_\epsilon)$.  

By the linearity of the convolution operators in the proof of Lemma \ref{smoothdense}, and the linearity of $T$, $\hat{T}$ satisfies the linearity axiom.

If $g^j$ is constant on a neighborhood of $\supp(f)$, say $N_{\epsilon_0}(\supp(f))$, then for every $\epsilon<\epsilon_0$, $g^j_\epsilon$ is constant on $\supp(f)$.  By the locality property of $T$, $T(\omega_\epsilon)=0$ for such $\epsilon$, so that $T(\omega)=0$.  From Remark \ref{localityremark}, we may conclude that $\hat{T}$ satisfies the locality axiom as well. 

We next claim that $\hat{T}$ has locally finite mass.  Indeed, suppose $U\subset \BBG$ is open, and $\omega\in\extformsc{k}(U)$ with $||\omega||\leq 1$.  From the definition, we may write
\begin{equation*}
\omega = \sum_{s\in S} f_s\, dg^1_s \wedge \dotsb \wedge dg^k_s
\end{equation*}
for some functions $g^i_s\in\liploc(X)$ such that $L(g^i_s|_{\supp(f)})\leq 1$.  Moreover, by Remark \ref{localityremark}, we may assume without loss of generality that $L(g^i_s)=1$, since we may replace $g^i_s$ with $\tilde{g^i_s}$, where $\tilde{g^i_s}$ is a McShane extension of $g^i_s|_{\supp(f)}$ to $\BBG$ (see, e.g., \cite[Theorem 6.2]{Heinonen}. With this assumption, we have $||\omega_\epsilon||\leq 1$ for each $\epsilon>0$.  Since $U$ is open and $\supp(f)$ is compact, $f_\epsilon$ is supported in $U$ for sufficiently small $\epsilon$, so that $|T(\omega_\epsilon)|\leq ||T||(U)$ and $|\partial T(\omega_\epsilon)|\leq ||\partial T||(U)$.  By the construction of $T$, and the locally finite mass assumptions on $T$ and $\partial T$, $||\hat T(U)||\leq ||T(U)||< \infty$ and $||\partial\hat{T}||(U)\leq ||\partial T(U)||<\infty$.

The continuity axiom follows immediately from the local finiteness of $||T||$ and $||\partial T||$, via the decomposition \eqref{cauchyexp}, with $\omega_{\epsilon_1}$ and $\omega_{\epsilon_2}$ replaced by $\omega_i$ and $\omega$, respectively.

Finally, uniqueness of $T$ is a consequence of Corollary \ref{smootheval}.
\end{proof}

\begin{proof}[Proof of Theorem \ref{normalisomorphism}]
By Proposition \ref{smoothextension}, it suffices to show that every metric current $T\in\normallocvanishing{k}(\BBG_R)$ satisfies the linearity and locality axioms of Definition \ref{currentdef} for smooth forms, as well as the local finiteness of $||T||$ and $||\partial T||$ when defined using smooth forms, and \textit{with respect to the metric $\cdist$}.

Linearity and locality follow from the fact that $T$ is a current.  To prove the local finiteness of $||T||$ and $||\partial T||$, we first let $\{v_1,\dotsc,v_m\}$ be a basis for the vertical subspace $\mathfrak v\subset \mathfrak g$.  Since by definition, $\pi|_V=0$, we have $\langle \dpansu \pi^i, v_j\rangle=0$ for $i=1,\dotsc,n$ and $j=1,\dotsc,m$, so that the dual basis to $\{u_1,\dotsc,u_n,v_1,\dotsc,v_m\}$ is $\{\dpansu \pi^1,\dotsc,\dpansu \pi^n, \theta^1,\dotsc,\theta^m\}$, where $\theta^1,\dotsc,\theta^m$ are vertical.  Thus for every smooth function $g\in\smooth(\BBG)$, we have
\[ \driem g=\sum_{i=1}^n X^{u_i}(g)\,\dpansu \pi^i + \sum_{j=1}^m X^{v_j}(g)\theta^j \text{.}\]
Moreover, by Rademacher's Theorem, $\driem$ is the Cheeger differential for the Riemannian metric, so Theorem \ref{representation} implies that 
\[ T\restrict{\driem g}=T\restrict{\sum_{i=1}^n X^{u_i}(g)\,\dpansu \pi^i + \sum_{j=1}^m X^{v_j}(g)\theta^j} \text{,}\]
and since $T$ vanishes on vertical forms, we further have that 
\begin{equation}
\label{rcurrentrestriction}
T\restrict{\driem g}=T\restrict{\sum_{i=1}^n X^{u_i}(g)\,d\pi^i} \text{.}
\end{equation}
As in the proof of Lemma \eqref{prefinitemass}, we let $\omega \in\smoothformsc{k}(U)$ with $||\omega||\leq 1$ (where comass is defined using smooth forms, and the metric $\cdist$), and let $U\subset \BBG$, with $\overline{U}$ compact. We may then write $\omega = \sum_{s \in S} f_s\,dg^1_s\wedge\dotsb\wedge dg^k_s$, with $f_s\in\smoothc(U)$, $\sum_{s\in S}|f_s|\leq (1+\epsilon)$ for some $\epsilon>0$,  and $g^i_s\in\lip_1(U)\cap \smooth(U)$ for each $i$ and $s$.  Note that under these assumptions, we have $|X^{u_i}(g^j_s)|\leq 1$.  Moreover, the projection $\pi$ was defined so that the Lipschitz constant with respect to the \textit{Riemannian} metric of each function $\pi^i$ is $1$.  Thus for each $s\in S$, we have
\[\left|\left|f_s \left(\sum_{i=1}^n X^{u_i}(g^1_s)\,d\pi^i\right)\wedge \dotsb\wedge \left(\sum_{i=1}^n X^{u_i}(g^k_s)\,d\pi^i\right)\right|\right|_r \leq \frac{n!}{k!} |f_s|\text{,}\]
where $||\cdot||_r$ denotes the comass of a metric form with respect to the Riemannian metric. It follows that $||\omega||_r\leq (1+\epsilon)\frac{n!}{k!}$. This holds for all $\epsilon>0$, so we have $||\omega||_r\leq \frac{n!}{k!}$.  Thus $|T(\omega)|\leq \frac{n!}{k!}||T||(U)<\infty$ for each open set $U$, by the local finiteness of $||T||$.  The argument for $\partial T$ is identical, and so by Proposition \ref{smoothextension}, the proof is complete.
\end{proof}

\section{Rectifiability}
\label{rectifiability}
We interpret our results in the context of rectifiable sets in metric spaces.

\begin{definition}
\label{rectdef}
A metric space $X$ is called \textbf{$k$-rectifiable} if it is the union of countably many Lipschitz images of subsets of $\BBR^k$ and an $\mathcal H^k$-null set.  That is,
$$X = \left(\bigcup_i F_i(A_i) \right) \cup N$$
where each $A_i \subseteq \BBR^k$, $F_i\colon A_i\rightarrow X$ is Lipschitz, and $\mathcal H^k(N)=0$. If every $k$-rectifiable subset $S$ of a space $X$ is trivial (i.e. $\mathcal H^k(S)=0$),  $X$ is said to be \textbf{purely $k$-unrectifiable}.
\end{definition}

Ambrosio and Kirchheim studied rectifiable sets in metric spaces in \cite{AmbrosioKirchheimRect}, continuing earlier work by Kirchheim \cite{Kirchheim}.  With the help of an area formula and a metric differentiation theorem developed in \cite{Kirchheim}, they proved that one can take the maps $F_i$ in Definition \ref{rectdef} to be bi-Lipschitz.  This immediately implies that a nontrivial $k$-rectifiable set must admit nonzero metric $k$-currents, as one can simply push forward a Euclidean current from one of the sets $A_i$.

We now examine some consequences of our results in terms of rectifiability.  First, Corollary \ref{fewcurrents} has immediate implications for the dimension of a rectifiable subset of a space admitting a differentiable structure.
\begin{corollary}
\label{norect}
Let $X = (X,d,\mu)$ be a proper, doubling, metric measure space admitting a differentiable structure of dimension $n$.  Then there is subset $N\subset X$, with $\mu(N)=0$, such that $X\backslash N$ is purely $k$-unrectifiable for any $k>n$.
\end{corollary}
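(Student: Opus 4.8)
The plan is to deduce this directly from Corollary \ref{fewcurrents} together with the bi-Lipschitz parametrization of rectifiable sets established in \cite{AmbrosioKirchheimRect}. Since $X$ is proper it is in particular locally compact, so the hypotheses of Theorem \ref{representation} and hence of Corollary \ref{fewcurrents} are in force. Write $X$ as a countable union of coordinate patches $(Y_i,\pi_i)$, $\pi_i\colon Y_i\to\BBE_i$, with $\dim(\BBE_i)\leq n$ for every $i$. For each $i$, Corollary \ref{fewcurrents} furnishes a subset $Y_{i,0}\subseteq Y_i$ with $\mu(Y_i\setminus Y_{i,0})=0$ such that every nonzero current concentrated on $Y_{i,0}$ has dimension at most $\dim(\BBE_i)\leq n$. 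Put $N=X\setminus\bigcup_i Y_{i,0}$. Then $N\subseteq\bigcup_i(Y_i\setminus Y_{i,0})$, so $\mu(N)=0$, and it remains to show that $X\setminus N$ is purely $k$-unrectifiable for every $k>n$.

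Fix $k>n$ and suppose, for contradiction, that $S\subseteq X\setminus N$ is $k$-rectifiable with $\mathcal H^k(S)>0$. By the refinement of Definition \ref{rectdef} proved in \cite{AmbrosioKirchheimRect} (and recalled in the discussion above), $S$ is, up to an $\mathcal H^k$-null set, a countable union of bi-Lipschitz images of subsets of $\BBR^k$; since $\mathcal H^k(S)>0$ one such piece has positive $\mathcal H^k$-measure, so there is a compact set $A\subseteq\BBR^k$ of positive Lebesgue measure and a bi-Lipschitz embedding $F\colon A\to X$ with $F(A)\subseteq S$. Let $R$ be the Euclidean integration current on $A$, i.e.\ the compactly supported metric $k$-current in $\BBR^k$ given by $R(f\,dg^1\wedge\dotsb\wedge dg^k)=\int_A f\,\det(\partial g^i/\partial x^j)\,dx$ (well defined by Rademacher's theorem); evaluating on a cutoff of $dx^1\wedge\dotsb\wedge dx^k$ shows $R\neq 0$. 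As $F$ is bi-Lipschitz and $A$ is compact, the pushforward $F_\#R$ (in the sense of Remark \ref{pushforwardcompact}) is a metric $k$-current on $X$, concentrated on $F(A)\subseteq X\setminus N\subseteq\bigcup_i Y_{i,0}$, and it is nonzero because composing with a Lipschitz extension of $F^{-1}$ and invoking the locality axiom recovers $R$.

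To conclude, choose Borel sets $Z_i\subseteq Y_{i,0}$ partitioning $\bigcup_i Y_{i,0}$ and write $F_\#R=\sum_i(F_\#R)\restrict{Z_i}$; since $F_\#R\neq 0$, some summand $(F_\#R)\restrict{Z_i}$ is a nonzero $k$-current concentrated on $Z_i\subseteq Y_{i,0}$. Because $k>n\geq\dim(\BBE_i)$, this contradicts Corollary \ref{fewcurrents}. Hence $\mathcal H^k(S)=0$, and $X\setminus N$ is purely $k$-unrectifiable for every $k>n$. The only ingredient beyond Corollary \ref{fewcurrents} is the bi-Lipschitz structure theorem for metric rectifiable sets of \cite{AmbrosioKirchheimRect}; I expect the one point that needs a little care to be the verification that $F_\#R$ is genuinely nonzero and supported inside $F(A)$, the remaining steps being routine bookkeeping with countable coverings and restrictions.
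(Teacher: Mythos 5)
Your proof is correct and follows exactly the route the paper intends: the paper states Corollary \ref{norect} without a written proof, treating it as an immediate consequence of Corollary \ref{fewcurrents} together with the remark that the bi-Lipschitz parametrization from \cite{AmbrosioKirchheimRect} lets one push forward a nonzero Euclidean $k$-current onto any nontrivial $k$-rectifiable set. You have simply supplied the bookkeeping (the choice of $N$, the restriction to the pieces $Z_i\subseteq Y_{i,0}$, and the verification that $F_\#R\neq 0$) that the paper leaves implicit.
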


The area formula and metric differential are also used in \cite{AmbrosioKirchheimRect} to prove the following theorem.  Though stated there for $n=1$, the proof given in \cite{AmbrosioKirchheim} extends to the general case.
\begin{theorem}[{\cite[Theorem 7.2]{AmbrosioKirchheimRect}}]
\label{norectheis}
The Heisenberg group $\heis{n}=(\heis{n},\cdist)$ is purely $k$-unrectifiable for $k>n$.
\end{theorem}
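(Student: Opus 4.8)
The plan is to deduce the statement from Corollary~\ref{nohncurrents}, which guarantees that $\currentsloc{k}((\heis{n},\cdist))=0$ whenever $k>n$, by exploiting the elementary principle (recorded in the discussion preceding Definition~\ref{rectdef}) that a metric space containing a nontrivial $k$-rectifiable subset must support a nonzero metric $k$-current of locally finite mass. So I would argue by contradiction: assume $k>n$ and that $\heis{n}$ is not purely $k$-unrectifiable, so that there is a $k$-rectifiable set $S\subseteq\heis{n}$ with $\mathcal H^k(S)>0$.

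First I would invoke the bi-Lipschitz refinement of the structure theorem for rectifiable metric spaces \cite{AmbrosioKirchheimRect,Kirchheim}: up to an $\mathcal H^k$-null set, $S$ is a countable union of bi-Lipschitz images of compact subsets of $\BBR^k$, so since $\mathcal H^k(S)>0$ there is a compact $K\subseteq\BBR^k$ with $\mathcal L^k(K)>0$ and a bi-Lipschitz map $F\colon K\to(\heis{n},\cdist)$. Let $T_0\in\currents{k}(\BBR^k)$ be the current of integration over $K$,
\begin{equation*}
T_0(\phi\,d\psi^1\wedge\dotsb\wedge d\psi^k)=\int_K\phi\det\left(\frac{\partial\psi^i}{\partial x^j}\right)\,d\mathcal L^k\text{,}
\end{equation*}
which has $\|T_0\|=\mathcal L^k\restrict{K}\neq 0$ and compact support. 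Pushing $T_0$ forward along $F$ yields a current $T=F_\#T_0\in\currents{k}((\heis{n},\cdist))\subseteq\currentsloc{k}((\heis{n},\cdist))$; since $F$ is only given on $K$, the push-forward is defined by replacing each argument $g^i$ of $T$ with a McShane extension of $g^i\circ F\colon K\to\BBR$ to $\BBR^k$, which is legitimate because $T_0$ has compact support in $K$ and its value depends only on the values on $K$ and their a.e.\ derivatives there.

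It remains to check $T\neq 0$, and this is where bi-Lipschitzness is used: $F^{-1}\colon F(K)\to K$ is Lipschitz, hence extends coordinatewise by McShane to a Lipschitz map $G\colon\heis{n}\to\BBR^k$ with $G\circ F=\operatorname{id}$ on $K$, and then a direct computation of $G_\#T$ against a test form collapses, by $G\circ F=\operatorname{id}$ on $K$, to the formula defining $T_0$; thus $G_\#T=T_0\neq 0$, forcing $T\neq 0$. Consequently $\currentsloc{k}((\heis{n},\cdist))\neq 0$ for some $k>n$, contradicting Corollary~\ref{nohncurrents}, so $\heis{n}$ is purely $k$-unrectifiable for every $k>n$. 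The real content of this argument sits entirely in Corollary~\ref{nohncurrents} (hence in Theorem~\ref{characterization} and the Rumin-type fact that every form in $\diffforms{k}(\heis{n})$ with $k>n$ is built from the contact form $\theta$ and $d\theta$); the only place where a careful write-up must slow down is the routine verification that $F_\#T_0$ is a well-defined current even though $F$ lives only on $K$, and that is the step I expect to be the main, and quite modest, technical obstacle.
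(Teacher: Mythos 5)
Your argument is correct and is exactly the route the paper takes: the paper attributes the theorem to Ambrosio--Kirchheim but observes in Section \ref{rectifiability} that it follows from Corollary \ref{nohncurrents} together with the bi-Lipschitz refinement of the rectifiability decomposition, by pushing forward a Euclidean current of integration and using a Lipschitz left inverse to see that the pushforward is nonzero. Your write-up merely supplies the routine details (McShane extensions, well-definedness of the pushforward of a current concentrated on $K$) that the paper leaves implicit.
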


In light of the fact that one can use bi-Lipschitz maps in the definition of rectifiability, it is clear that Theorem \ref{norectheis} can also be viewed as a consequence of Corollary \ref{nohncurrents}.  On the one hand, this argument for unrectifiability is not much different from the one in \cite{AmbrosioKirchheimRect}, in that it uses the same ingredients, namely, Pansu's differentiation theorem and the area formula.  On the other hand,  the method of proof by way of currents uses the area formula only implicitly, and solely for the purpose of using bi-Lipschitz maps in Definition \ref{rectdef}.  Moreover,  this argument relies on differentiation of maps from $\heis{1}$ into Euclidean spaces, rather than vice-versa.  Thus no analysis of the metric differential of any map \textit{into} $\heis{1}$ is required. Instead, one computes the Cheeger differential of a map from $\heis{1}$ into a Euclidean space. 

Magnani \cite{Magnani} generalized the results of \cite{AmbrosioKirchheimRect} to arbitrary Carnot groups. 
\begin{theorem}[{\cite[Theorem 1.1]{Magnani}}]
\label{carnotrect}
A Carnot group $\BBG$ is purely $k$-unrectifiable if and only if every horizontal Abelian subalgebra of its Lie algebra $\mathfrak g$ has rank less than $k$.
\end{theorem}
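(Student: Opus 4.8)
The plan is to prove the two implications separately; the ``only if'' direction reduces to an explicit construction, while the ``if'' direction carries the genuine content.

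\textbf{The easy direction.} Suppose $\mathfrak a\subseteq H$ is an abelian subalgebra with $\dim\mathfrak a\geq k$, and fix a $k$-dimensional subspace $W\subseteq\mathfrak a$. Since all brackets among elements of $\mathfrak a$ vanish and $\BBG$ is nilpotent, the BCH formula gives $\exp(u)^{-1}\exp(v)=\exp(v-u)$ for $u,v\in W$. As $v-u\in W\subseteq H$, the curve $t\mapsto\exp(t(v-u))$ is horizontal of Riemannian length $|v-u|$, so $\cdist(\exp u,\exp v)=\cdist(\gunit,\exp(v-u))\leq|v-u|$; and since $\cdist\geq\rdist$ and $\rdist$ is bi-Lipschitz to the norm on $\mathfrak g$ near $\gunit$, also $\cdist(\exp u,\exp v)\geq c\,|v-u|$ for $|v-u|$ small. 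Hence $\exp|_W$ is a locally bi-Lipschitz embedding of $\BBR^k$ into $(\BBG,\cdist)$, so $\mathcal H^k(\exp W)>0$ and $\exp W$ is a nontrivial $k$-rectifiable set; thus $\BBG$ is not purely $k$-unrectifiable.

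\textbf{The hard direction.} Assume every horizontal abelian subalgebra of $\mathfrak g$ has rank $<k$, and suppose for contradiction that $S\subseteq\BBG$ is $k$-rectifiable with $\mathcal H^k(S)>0$. By the bi-Lipschitz parametrization of \cite{AmbrosioKirchheimRect} we may assume $S=F(A)$ with $A\subseteq\BBR^k$ of positive Lebesgue measure and $F\colon A\to\BBG$ bi-Lipschitz onto $S$. Apply Pansu's theorem (Theorem \ref{pansu}, with $\BBG_1=\BBR^k$ carrying its standard dilations) at a.e.\ $a\in A$: the rescalings $F^t_a$ converge to a group homomorphism $\Dpansu F_a\colon\BBR^k\to\BBG$ commuting with all dilations. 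Writing $\log\Dpansu F_a(x)=\xi_1+\dotsb+\xi_m$ with $\xi_j\in V_j$, the homomorphism property gives $\Dpansu F_a(rx)=\exp\!\big(r\log\Dpansu F_a(x)\big)$ while commuting with dilations gives $\Dpansu F_a(rx)=\exp\!\big(\delta_r\log\Dpansu F_a(x)\big)$; injectivity of $\exp$ forces $r\xi_j=r^j\xi_j$, hence $\xi_j=0$ for $j\geq 2$ and $\log\Dpansu F_a(x)\in V_1=H$. Since $\BBR^k$ is abelian and $[H,H]\subseteq V_2$ with $V_2\cap H=0$, the image $\mathfrak a_a:=\dpansu F_a(\BBR^k)\subseteq H$ is an abelian subalgebra; and $\Dpansu F_a$ is injective, for a nonzero $v\in\ker\dpansu F_a$ would give $\cdist(F(a+tv),F(a))=\smallo(t)$, contradicting the Lipschitz bound on $F^{-1}$. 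Thus $\mathfrak a_a$ has rank $k$, contradicting the hypothesis.

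\textbf{A variant via currents, and the main obstacle.} When $k\geq 2$ the last step can instead be run through the theory of this paper: a nontrivial $k$-rectifiable set supports a nonzero $T\in\currentsloc{k}(\BBG)$ (push a Euclidean current forward along the bi-Lipschitz parametrization, as noted above), and by the final assertion of Theorem \ref{characterization} this $T$ satisfies $T\restrict{\theta}=0$ and $T\restrict{d\theta}=0$ for every vertical $\theta\in\diffforms{1}(\BBG)$. By the classification of rectifiable metric currents, $T$ has at $||T||$-a.e.\ point a simple orientation $\tau_1\wedge\dotsb\wedge\tau_k$, which by the Pansu computation above is horizontal; testing $T\restrict{d\theta}=0$ against invariant vertical forms $\theta$, for which $d\theta(X,Y)=-\theta([X,Y])$, yields $\theta([\tau_i,\tau_j])=0$ for all $i,j$, hence $[\tau_i,\tau_j]=0$, so the approximate tangent plane spans a horizontal abelian subalgebra of rank $k$ --- the same contradiction. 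The principal difficulty in either route is the infinitesimal analysis of the parametrization: one must know that the metric (Pansu) differential of $F$ is a genuine group homomorphism, so that ``the tangent plane is a subalgebra'' makes sense, and that $\mathcal H^k(S)>0$ survives passage to that differential (handled above by bi-Lipschitz injectivity; in the Lipschitz-only setting it requires Kirchheim's area formula). In the currents variant the corresponding input is exactly the rectifiable-current structure theory behind \cite{AmbrosioKirchheim} and \cite{AmbrosioKirchheimRect}; granting that, Theorem \ref{characterization} supplies the algebraic conclusion cleanly.
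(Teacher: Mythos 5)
First, a point of context: the paper does not prove Theorem \ref{carnotrect} at all --- it is imported verbatim from Magnani \cite{Magnani}, and the surrounding discussion (Corollary \ref{carnotsimple} and the remark after it) is derived \emph{from} the theorem, with the author explicitly stating that he does not know how to recover even the currents reformulation independently. So there is no in-paper proof to compare against; what you have written is, in outline, Magnani's own argument. Your ``easy direction'' is the standard construction and is correct.

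The genuine gap is in the hard direction, at the step ``Apply Pansu's theorem (Theorem \ref{pansu}) at a.e.\ $a\in A$.'' Theorem \ref{pansu} concerns Lipschitz maps defined on all of $\BBG_1$, whereas your $F$ is defined only on a measurable set $A\subseteq\BBR^k$, and a Lipschitz map from a subset of $\BBR^k$ into $(\BBG,\cdist)$ need not admit any Lipschitz extension to $\BBR^k$ --- the failure of such extensions is intimately tied to the very unrectifiability you are trying to prove, so this cannot be waved away. What is needed is an ``a.e.\ P-differentiability at density points of the domain'' statement for maps defined on measurable subsets, obtained by a blow-up/Arzel\`a--Ascoli argument at density points together with an additivity argument showing the limit is a homomorphism; supplying exactly this is the substantive technical content of \cite{Magnani} (and of the $\heis{n}$ case in \cite{AmbrosioKirchheimRect}). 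Relatedly, your injectivity step quietly assumes $a+tv\in A$; as written it fails for a general measurable $A$, and must be run at density points using nearby points of $A$ with $|a_i-(a+t_iv)|=\smallo(t_i)$. Once differentiability on subsets is granted, your algebraic conclusions (homogeneity under dilations forces the image into $V_1$; the homomorphism property from the abelian source forces the image to be an abelian subalgebra; bi-Lipschitzness forces injectivity, hence rank $k$) are correct. Finally, your ``variant via currents'' does not circumvent the obstacle: the structure theory producing a simple horizontal orientation at $||T||$-a.e.\ point rests on the same differentiation of parametrizations, and the paper itself records, immediately after Corollary \ref{carnotsimple}, that it is unaware whether that route can be made independent of Theorem \ref{carnotrect}.
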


We can interpret this result in the context of currents as well.  Indeed, suppose we pick linearly independent horizontal vectors $u_1,\dotsc,u_k \in H$, and let $\tilde{u}=u_1\wedge\dotsb\wedge u_k$. 

By the boundary computation \eqref{boundarycomputation}, the boundary $\partial T_{\tilde{u}}$ of the simple $k$-current $T_{\tilde{u}}$ vanishes if and only if $[u_i,u_j]=0$ for all $i$ and $j$.  This in turn is true if and only if the Lie subalgebra generated by the vectors $u_1,\dotsc,u_k$ is Abelian (or, equivalently, is horizontal).  Combining this with Proposition \ref{currentsk}, we obtain the following corollary to Magnani's Theorem.

\begin{corollary}
\label{carnotsimple}
A Carnot group $\BBG$ has a nontrivial $k$-rectifiable subset if and only if it has a nonzero, invariant, ``simple'' $k$-current $T_{\tilde{u}}=T_{u_1 \wedge\dotsb\wedge u_k}$.
\end{corollary}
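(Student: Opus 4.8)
The plan is to read the corollary off from Magnani's Theorem \ref{carnotrect} together with the invariant‑current analysis of Section \ref{invariantcurrents}. The bridge is the observation — already implicit in the discussion preceding the statement — that for \emph{any} linearly independent horizontal vectors $u_1,\dots,u_k\in H$, the boundary computation \eqref{boundarycomputation} (which is stated for basis vectors but holds for arbitrary horizontal vectors by multilinearity of the boundary operator), combined with Lemma \ref{noboundary}, shows that $\partial T_{\tilde u}$, with $\tilde u=u_1\wedge\dotsb\wedge u_k$, is given by integration of smooth forms against the constant $(k-1)$‑vector field
\[
\xi \;=\; \sum_{i<j}(-1)^{i+j}\,[u_i,u_j]\wedge u_1\wedge\dotsb\wedge\widehat{u_i}\wedge\dotsb\wedge\widehat{u_j}\wedge\dotsb\wedge u_k \;\in\; V_2\wedge\textstyle\bigwedge^{k-2}H,
\]
i.e.\ $\partial T_{\tilde u}$ has the form \eqref{boundarycomponentk} with vectors $v_b$ read off from the brackets $[u_i,u_j]$.

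First I would record the easy implication. Suppose $\BBG$ has a nontrivial $k$‑rectifiable subset. By Theorem \ref{carnotrect} there is a horizontal Abelian subalgebra $\mathfrak a\subseteq H$ with $\dim\mathfrak a\geq k$; pick linearly independent $u_1,\dots,u_k\in\mathfrak a$ and set $\tilde u=u_1\wedge\dotsb\wedge u_k\neq 0$. Then $T_{\tilde u}$ is an invariant precurrent (its $k$‑vector field is constant) which is nonzero by Remark \ref{uniquevectorfield}, and it has locally finite mass by Lemma \ref{prefinitemass}. Since $[u_i,u_j]=0$ for all $i,j$, the field $\xi$ above vanishes, so $\partial T_{\tilde u}=0$, whence by Proposition \ref{currentsk} (the implication ``$\partial T=0\Rightarrow T$ is a current'') $T_{\tilde u}$ is a nonzero invariant simple $k$‑current.

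Next I would treat the converse. Suppose $T_{\tilde u}=T_{u_1\wedge\dotsb\wedge u_k}$ is a nonzero invariant simple $k$‑current. Nonvanishing forces $\tilde u\neq 0$ by Remark \ref{uniquevectorfield}, i.e.\ $u_1,\dots,u_k$ are linearly independent. Being a current, $T_{\tilde u}$ satisfies $\partial T_{\tilde u}=0$ by Proposition \ref{currentsk}; evaluating against smooth compactly supported forms and using that $\partial T_{\tilde u}$ integrates against the constant field $\xi$, we get $\xi=0$ (if $\xi\neq 0$ one picks an invariant $(k-1)$‑form $\omega_0$ dual to $\xi$ with $\langle\omega_0,\xi\rangle\equiv1$ and a nonzero, nowhere negative $f\in\smoothc(\BBG)$, so that $\partial T_{\tilde u}(f\omega_0)=\int_\BBG f\,d\mu>0$, exactly as in the $4\Rightarrow 2$ step of the proof of Proposition \ref{currentsk}). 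The substantive point is then to extract $[u_i,u_j]=0$ for each pair from $\xi=0$: complete $u_1,\dots,u_k$ to a basis of $H$ and fix a basis of $V_2$; then the factors $u_1\wedge\dotsb\widehat{u_i}\dotsb\widehat{u_j}\dotsb\wedge u_k$ attached to distinct pairs $\{i,j\}\subseteq\{1,\dots,k\}$ are distinct basis $(k-2)$‑vectors of $\bigwedge^{k-2}H$ (since $\{i,j\}\mapsto\{1,\dots,k\}\setminus\{i,j\}$ is a bijection), so the products $w\wedge u_S$ with $w$ a $V_2$‑basis vector and $S$ a $(k-2)$‑subset of $\{1,\dots,k\}$ are linearly independent in $\bigwedge^{k-1}\mathfrak g$ (they are part of a basis, as $V_2\cap H=0$); hence $\xi=0$ forces every component of every $[u_i,u_j]$ to vanish. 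Therefore $\mathrm{Span}(u_1,\dots,u_k)$ is a horizontal Abelian subalgebra of rank $k$, so by Theorem \ref{carnotrect} $\BBG$ is not purely $k$‑unrectifiable, i.e.\ $\BBG$ has a nontrivial $k$‑rectifiable subset.

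The only genuinely delicate point is this last linear‑algebra bookkeeping in $\bigwedge^{k-1}\mathfrak g$: one must keep the $V_2$‑component and the $\bigwedge^{k-2}H$‑component cleanly separated and observe that different index pairs feed into different basis $(k-1)$‑vectors, so that no cancellation among the brackets $[u_i,u_j]$ is possible; everything else is a direct appeal to Magnani's Theorem \ref{carnotrect}, Proposition \ref{currentsk}, Lemma \ref{noboundary}, Remark \ref{uniquevectorfield}, and the boundary formula \eqref{boundarycomputation}.
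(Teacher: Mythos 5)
Your proof is correct and follows essentially the same route as the paper: the paper's argument is exactly the paragraph preceding the corollary, which reads the equivalence off from the boundary computation \eqref{boundarycomputation}, Lemma \ref{noboundary}, Proposition \ref{currentsk}, and Magnani's Theorem \ref{carnotrect}. The only difference is that you explicitly justify the step the paper asserts without proof --- that vanishing of the constant $(k-1)$-vector field $\xi\in V_2\wedge\bigwedge^{k-2}H$ forces every bracket $[u_i,u_j]$ to vanish, via the observation that distinct index pairs contribute linearly independent basis $(k-1)$-vectors --- and that bookkeeping is right.
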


We are unaware if Corollary \ref{carnotsimple} can be deduced independently of Theorem \ref{carnotrect}.  In particular, we do not know whether either implication is true in a general metric group with a differentiable structure.

\begin{remark}
It is not true that the absence of $k$-rectifiable sets in $\BBG$ implies the nonexistence of arbitrary (i.e., non-simple) $k$-currents.  To construct an explicit counterexample, let $\mathfrak g$ have the stratification $$\mathfrak g = \spa(u_1,u_2,u_3,u_4)\oplus \spa(v_1,v_2,v_3,v_4,v_5)$$ satisfying the relations $[u_1,u_2]=[u_3,u_4] = v_1$, $[u_1,u_3]=v_2$, $[u_1,u_4]=v_3$, $[u_2,u_3]=v_4$, and $[u_2,u_4]=v_5$.  It is easily verified that any two linearly independent horizontal vectors do not commute, and so by Corollary \ref{carnotsimple} and Theorem \ref{carnotrect}, respectively, $\BBG$ admits no nonzero simple $2$-currents, nor any nontrivial $2$-rectifiable sets.  On the other hand,  $T_{u_1\wedge u_2 - u_3 \wedge u_4}$ is a $2$-current, and is in fact a cycle, again by equation \eqref{boundarycomputation}.  Thus there are purely $k$-unrectifiable spaces which still admit normal $k$-currents, for $k\geq 2$.  In this sense, the theory of metric currents is at least somewhat more general than the theory of rectifiable sets.  This contrasts starkly with the Euclidean case, where every normal metric current can be identified with a normal current in the sense of Federer and Fleming \cite[Theorem 5.5]{Lang}, and where the latter can be approximated in Whitney's flat norm \cite{Whitney} (and hence weakly) by polyhedral chains, which are of course rectifiable.
\end{remark}









\end{document}